\newcommand{\kv}{{k_{v}}}
\newcommand{\bbA}{{\mathbb{A}}}
\newcommand{\bbF}{{\mathbb{F}}}
\newcommand{\bbQ}{{\mathbb{Q}}}
\newcommand{\bbC}{{\mathbb{C}}}
\newcommand{\bbR}{{\mathbb{R}}}
\newcommand{\bbZ}{{\mathbb{Z}}}
\newcommand{\supp}{{\mathrm{supp}}}
\newcommand{\Ind}{{\mathrm{Ind}}}
\newcommand{\GL}{{\mathrm{GL}}}
\newcommand{\triv}{{\mathbf{1}}}
\newcommand{\trace}{{\mathbf{tr}}}
\newcommand{\calA}{{\mathcal{A}}}
\newcommand{\calF}{{\mathcal{F}}}
\newcommand{\calK}{{\mathcal{K}}}
\newcommand{\calL}{{\mathcal{L}}}
\newcommand{\fraka}{{\mathfrak{a}}}
\def \ScptB{\mathcal B}
\def \ScptG{\mathcal G}
\def \ScptK{\mathcal K}
\def \ScptP{\mathcal P}
\def \ScptU{\mathcal U}
\def \mymod{\text{\rm{{\ }mod{\ }}}}
\def \mystrongdivide{|_s}
\def \myskip{\vskip 0.2in}
\def\GL#1{{\text{\rm{GL}}}_{#1}}
\def\SL#1{{\text{\rm{SL}}}_{#1}}
\def \fkg{{\mathfrak g}}
\def \fkl{{\mathfrak l}}
\def \fkm{{\mathfrak m}}
\def \fkn{{\mathfrak n}}
\def \fkp{{\mathfrak p}}
\def \fks{{\mathfrak s}}
\def \fku{{\mathfrak u}}
\font\sans=cmss10
\numberwithin{equation}{section}
\newtheorem{Prop}[equation]{Proposition}
\newtheorem{Lem}[equation]{Lemma}
\newtheorem{Thm}[equation] {Theorem}
\newtheorem{Cor}[equation]{Corollary}
\newtheorem{Assumptions}[equation]{Assumptions}
\title
[
Cusp Forms  of Congruence Subgroups
]
{On the Cusp Forms of Congruence Subgroups of an almost Simple Lie group} 
\author{Allen Moy and Goran Mui\'c}
\address{ Department of Mathematics,
The Hong--Kong University of Science and Technology,
Clear Water Bay, Hong Kong}
\email{amoy@ust.hk}
\address{ Department of Mathematics,
University of Zagreb,
Bijeni\v cka 30, 10000 Zagreb,
Croatia}
\email{gmuic@math.hr}
\thanks{The 1st author acknowledges Hong Kong Research Grants Council
grant CERG {\#}603310, and the 2nd author acknowledges Croatian Ministry of Science and Technology grant {\#}0037108.}
\begin{document}
\maketitle

\begin{abstract}

In this paper we address the issue of existence of newforms among the
cusp forms for almost simple Lie groups using the approach of the second author combined with local information on supercuspidal representations for $p$-adic groups known by the first author. We pay special attention to the case of $SL_M(\Bbb R)$ where we prove various existence results for principal congruence subgroups.

\end{abstract}

\section{Introduction}


The existence and construction of cusp forms is a fundamental problem 
in the modern theory of automorphic forms (\cite{arthur}, \cite{sel}, 
\cite{wmuller1}, \cite{gold}, \cite{gold-1}).   
In this paper we address the issue of existence of cusp forms for almost simple Lie groups using the approach of \cite{Muic1} combined with some 
local information on supercuspidal representations for $p$-adic groups 
(\cite{MoyPr1}, \cite{MoyPr}). In view of recent developments in the analytic number theory (\cite{gold}, \cite{gold-1}) we pay special attention to the case of $SL_M$. 

\medskip

Suppose $G$ is a simply connected, absolutely almost simple algebraic group defined over $\bbQ$, and $G_\infty := G(\bbR )$ is not compact.   Let $\bbA$ and $\bbA_f$ denote respectively the ring of adeles and finite adeles of  $\Bbb Q$.  For each prime $p$, let $\bbZ_p$ denote the p-adic integers inside $\bbQ_p$.  Recall that for almost all primes $p$, the group $G$ is unramified over $\bbQ_p$.  Thus, $G$ is a group scheme over $\bbZ_p$, and   $G(\bbZ_p)$ is a hyperspecial maximal compact subgroup of  $G(\bbQ_p)$ (\cite{Tits}, 3.9.1). The group $G(\bbA_f)$ has a basis of neighborhoods of the identity consisting of open-compact subgroups.   Suppose $L \subset G(\bbA_f)$ is an open--compact subgroup.  Set
\begin{equation}\label{int-1}
\Gamma_{L} \ := \ G(\bbQ) \cap L \subset G(\bbA_f),
\end{equation}
where we identify $G(\Bbb Q)$ with its image under the diagonal embedding
into $G(\bbA_f)$ and $G(\bbA )$.  Now, 
identifying $G(\Bbb Q)$ with its image under the diagonal embedding
into $G(\bbA)$, the projection of $\Gamma_{L} \subset G(\bbA )$ to  $G_\infty$ is a discrete subgroup. We continue to denote this discrete subgroup
by $\Gamma_{L}$. It is called a congruence subgroup of $G( \bbQ )$ (\cite{BJ}).
We write $\calA_{cusp}(\Gamma_{L}\backslash G_\infty)$ and $L^2_{cusp}(\Gamma_{L} \backslash G_\infty)$ for the spaces of cusp forms and its $L^2$-closure (\cite{BJ}). 
We recall the notion of $\Gamma_{L}$--cuspidality here: a continuous function $\varphi: \ \Gamma_{L}\backslash G_\infty\rightarrow \mathbb C$ is $\Gamma_{L}$--cuspidal if 
$$
\int_{U_P(\mathbb R)\cap \Gamma_{L}\backslash U_P(\mathbb R)}\varphi (ug)=0, \ \ g\in G_\infty,
$$
where $U_P$ is the unipotent radical of any proper $\mathbb Q$--parabolic subgroup.
\myskip

Recall the assumptions that $G$ is simply connected, absolutely almost simple, and $G_{\infty}$ is non-compact means it satisfies the strong approximation property (\cite{Platonov}, \cite{BJ} \S 4.7), i.e., $G(\Bbb Q)$ is dense in $G(\Bbb A_f )$, and so for any open compact subgroup  $L \subset G(\Bbb A_f)$:
$$
G(\Bbb A_f)=G(\Bbb Q) L \ .
$$

\noindent We consider a finite family of open compact subgroups
\begin{equation}\label{finite-family}
{\mathcal F} = \{ L \}
\end{equation}
satisfying the following assumptions:

\medskip


\begin{Assumptions}\label{assumptions} \ 

\smallskip
\begin{itemize}
\item[(i)] Under the partial ordering of inclusion there exists a subgroup 
$L_{\text{\rm{min}}} \in {\mathcal F}$ that is a subgroup of all the others.  
\smallskip 
\item[(ii)] The groups $L\in {\mathcal F}$ are factorizable, i.e., $L={\underset p \prod} L_p$, and for all but finitely many $p$'s, the group $L_p$ is the maximal compact subgroup $K_p := G(\bbZ_p)$. 
\smallskip 
\item[(iii)]  There exists a non-empty finite set of primes $T$ such that for $p\in T$  the group $G$ has a Borel subgroup $B=AU$ and a maximal torus $A$ defined over $\Bbb Z_p$, and there exists a supercuspidal representation $\pi_p$ of  $G(\Bbb Q_p)$ such that $\pi^{L_{\text{\rm{min}}, p}}_p\neq 0$, and for $L\neq L_{\text{\rm{min}}}$ there exists $p\in T$ such that $\pi^{L_p}_p=0$.
\end{itemize}
\end{Assumptions}

\noindent We note that a simply connected split almost simple group, e.g.,  $SL_M$, and $Sp_{2M}$, defined over ${\Bbb Z}$ satisfies the above assumptions.

\bigskip

\begin{Thm}\label{intr-thm}  Suppose $G$ is a simply connected, absolutely almost simple algebraic group defined over ${\Bbb Q}$, such that $G_{\infty}$ is non-compact and ${\mathcal F} = \{ L \}$ is a finite set of open compact subgroups of $G({\bbA}_{f})$ satisfying assumptions \eqref{assumptions}.  Then, the orthogonal complement of 
$$
\sum_{\substack{ L \in {\mathcal F}  \\ L_{\text{\rm{min}}} \subsetneq L }} L^2_{cusp}(\Gamma_L \backslash G_\infty)
$$ 
in 
$L^2_{cusp}(\Gamma_{L_{\text{\rm{min}}}} \backslash G_\infty)$ is a direct sum of 
infinitely many irreducible unitary representations of $G_\infty$.
\end{Thm}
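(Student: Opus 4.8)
The plan is to transfer the question to the adele group, describe the orthogonal complement through the cuspidal spectrum of $G(\bbA)$, use Assumption \eqref{assumptions}(iii) to isolate the part coming from automorphic representations with prescribed supercuspidal components at $T$, and then use the Poincar\'e series machinery of \cite{Muic1} to exhibit infinitely many such representations. Since $G$ is simply connected, absolutely almost simple and $G_{\infty}$ noncompact, strong approximation (\cite{Platonov}, \cite{BJ}) gives $G(\bbA_{f})=G(\bbQ)\,L$ for every open compact $L$, hence a $G_{\infty}$-equivariant isometry of $L^{2}_{cusp}(\Gamma_{L}\backslash G_{\infty})$ onto the subspace $L^{2}_{cusp}(G(\bbQ)\backslash G(\bbA))^{L}$ fixed by the right action of $L\subset G(\bbA_{f})$. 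By Gelfand--Piatetski-Shapiro, $L^{2}_{cusp}(G(\bbQ)\backslash G(\bbA))=\widehat{\bigoplus}_{\Pi}m(\Pi)\,\Pi$ is a discrete Hilbert sum over cuspidal automorphic representations $\Pi=\Pi_{\infty}\otimes\Pi_{f}$ with $1\le m(\Pi)<\infty$; taking $L$-invariants (each $\Pi_{f}^{L}$ finite dimensional) gives $L^{2}_{cusp}(\Gamma_{L}\backslash G_{\infty})=\widehat{\bigoplus}_{\Pi}m(\Pi)\,\Pi_{\infty}\otimes\Pi_{f}^{L}$. As $L_{\text{\rm{min}}}\subseteq L$ forces $\Pi_{f}^{L}\subseteq\Pi_{f}^{L_{\text{\rm{min}}}}$, the inclusions in the Theorem are the evident ones and the orthogonal complement in question is
$$
V\;=\;\widehat{\bigoplus}_{\Pi}m(\Pi)\,\Pi_{\infty}\otimes W_{\Pi},\qquad W_{\Pi}\;:=\;\Pi_{f}^{L_{\text{\rm{min}}}}\cap\Big(\sum_{\substack{L\in\mathcal{F}\\ L_{\text{\rm{min}}}\subsetneq L}}\Pi_{f}^{L}\Big)^{\!\perp}.
$$
A closed $G_{\infty}$-submodule of a discrete Hilbert sum of irreducibles is again such a sum (decompose the commutant via Schur's lemma), so $V$ is a Hilbert direct sum of irreducible unitary $G_{\infty}$-representations, with $\sum_{\Pi}m(\Pi)\dim W_{\Pi}$ irreducible constituents counted with multiplicity; the Theorem is therefore equivalent to the assertion that this number is infinite.

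Let $\mathcal{S}$ be the set of cuspidal automorphic representations $\Pi$ with $m(\Pi)>0$, with $\Pi_{p}\cong\pi_{p}$ for every $p\in T$, and with $\Pi_{f}^{L_{\text{\rm{min}}}}\neq 0$. If $\Pi\in\mathcal{S}$ and $L\in\mathcal{F}$ satisfies $L\supsetneq L_{\text{\rm{min}}}$, Assumption \eqref{assumptions}(iii) supplies $p\in T$ with $\Pi_{p}^{L_{p}}=\pi_{p}^{L_{p}}=0$, so $\Pi_{f}^{L}=0$; hence $W_{\Pi}=\Pi_{f}^{L_{\text{\rm{min}}}}\neq 0$ and $\sum_{\Pi}m(\Pi)\dim W_{\Pi}\ge|\mathcal{S}|$, so it suffices to prove $\mathcal{S}$ is infinite. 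For this, following \cite{Muic1}, I would build Poincar\'e series: for $p\in T$ pick, using $\pi_{p}^{L_{\text{\rm{min}, p}}}\neq 0$, a bi-$L_{\text{\rm{min}, p}}$-invariant matrix coefficient $f_{p}(g)=\langle\pi_{p}(g)v_{p},\wit{v}_{p}\rangle$ with $v_{p}\in\pi_{p}^{L_{\text{\rm{min}, p}}}$, $\wit{v}_{p}\in\wit{\pi}_{p}^{L_{\text{\rm{min}, p}}}$ nonzero and $\langle v_{p},\wit{v}_{p}\rangle\neq 0$ (compactly supported, since $\pi_{p}$ is supercuspidal and $Z(G)(\bbQ_{p})$ is finite); for $p\notin T$ let $f_{p}$ be the normalized characteristic function of $L_{\text{\rm{min}, p}}$; and for $f_{\infty}\in C^{\infty}_{c}(G_{\infty})$ set $f=f_{\infty}\otimes\bigotimes_{p}f_{p}$ and $Pf(g)=\sum_{\gamma\in G(\bbQ)}f(\gamma g)$. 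By \cite{Muic1} (and discreteness of $G(\bbQ)$), $Pf$ is a smooth, compactly supported (hence $L^{2}$), right-$L_{\text{\rm{min}}}$-invariant function on $G(\bbQ)\backslash G(\bbA)$, and it is cuspidal: along the unipotent radical $U_{Q}$ of any proper $\bbQ$-parabolic $Q$, the constant term of $Pf$ unfolds to a product over places whose factor at each $p\in T$ is of the form $\int_{U_{Q}(\bbQ_{p})}f_{p}(\delta_{p}ng_{p})\,dn$ ($\delta_{p}\in G(\bbQ_{p})$), which vanishes because a supercuspidal representation has trivial Jacquet modules. Thus $Pf\in L^{2}_{cusp}(\Gamma_{L_{\text{\rm{min}}}}\backslash G_{\infty})$. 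Moreover, since each $f_{p}$ ($p\in T$) is a matrix coefficient of the supercuspidal $\pi_{p}$, orthogonality of supercuspidal matrix coefficients forces $Pf$ — and hence every vector of the closed $G_{\infty}$-span $V_{0}$ of $\{Pf:f_{\infty}\in C^{\infty}_{c}(G_{\infty})\}$ — to be orthogonal to the $\Pi$-isotypic subspace of $L^{2}_{cusp}(G(\bbQ)\backslash G(\bbA))$ whenever $\Pi_{p}\not\cong\pi_{p}$ for some $p\in T$; so every $\Pi$ occurring in $V_{0}$ lies in $\mathcal{S}$ (it has $\Pi_{p}\cong\pi_{p}$ for all $p\in T$, and $\Pi_{f}^{L_{\text{\rm{min}}}}\neq 0$ because $V_{0}$ consists of $L_{\text{\rm{min}}}$-fixed vectors), and $V_{0}\subseteq V$.

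It remains to show $V_{0}$, hence $V$, is not a finite direct sum of irreducibles — equivalently, that $\mathcal{S}$ is infinite. Here I would use two facts about these Poincar\'e series. (1) A non-vanishing criterion: if $\supp(f_{\infty})$ lies in a small enough neighborhood $U$ of $e$ — small enough that $G(\bbQ)\cap\big(U\times\prod_{p}\supp(f_{p})\big)=\{e\}$, possible since $G(\bbQ)$ is discrete and $\prod_{p}\supp(f_{p})$ compact — then $Pf(e)=f(e)$, which is nonzero as soon as $f_{\infty}(e)\neq 0$. (2) The Casimir $\Omega\in U(\frakg_{\infty})$ acts on $Pf$ through $f_{\infty}$: $\Omega\cdot Pf=P\big((R_{\Omega}f_{\infty})\otimes\bigotimes_{p}f_{p}\big)$, where $R_{\Omega}$ is the associated bi-invariant differential operator on $G_{\infty}$. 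Now suppose $V_{0}$ were a finite direct sum of irreducible unitary $G_{\infty}$-modules; then $\Omega$ would act on its smooth vectors through finitely many eigenvalues $\lambda_{1},\dots,\lambda_{k}$, and since each $Pf$ is a smooth vector of $V_{0}$ we would have $\prod_{i}(\Omega-\lambda_{i})\cdot Pf=0$ for all $f_{\infty}$, i.e.\ $P\big((\prod_{i}(R_{\Omega}-\lambda_{i})f_{\infty})\otimes\bigotimes_{p}f_{p}\big)=0$. But $\prod_{i}(R_{\Omega}-\lambda_{i})$ is a nonzero differential operator (as $U(\frakg_{\infty})$ is a domain and $\Omega$ is not a scalar), so one may choose $f_{\infty}\in C^{\infty}_{c}(U)$ with $\big(\prod_{i}(R_{\Omega}-\lambda_{i})f_{\infty}\big)(e)\neq 0$; since $\prod_{i}(R_{\Omega}-\lambda_{i})f_{\infty}$ is still supported in $U$, criterion (1) makes the last Poincar\'e series nonzero — a contradiction. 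Hence $\mathcal{S}$ is infinite and $V$ is a Hilbert direct sum of infinitely many irreducible unitary representations of $G_{\infty}$. The main obstacle is thus concentrated in the analytic input from \cite{Muic1} — convergence, $L^{2}$-ness, cuspidality and the non-vanishing criterion for these compactly supported Poincar\'e series; the supply of supercuspidal $\pi_{p}$ with $L_{\text{\rm{min}, p}}$-fixed vectors is the local information recorded here via \cite{MoyPr1}, \cite{MoyPr}, and for $G=SL_{M}$ with principal congruence subgroups it is what yields the effective existence statements in the body of the paper.
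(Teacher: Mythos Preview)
Your argument is correct and parallels the paper's proof: both transfer to the adele group via strong approximation, build Poincar\'e series $P(f)$ with supercuspidal matrix coefficients at the primes in $T$, deduce cuspidality from the vanishing of $\int_{U_Q(\bbQ_p)}f_p(\delta n g)\,dn$, and use Assumption~\ref{assumptions}(iii) to place $P(f)$ in the desired orthogonal complement. The paper packages the non-vanishing and the ``infinitely many'' step as citations of \cite{Muic1} (recorded here as Lemmas~\ref{lem-2} and~\ref{lem-3}), while you supply an explicit Casimir-eigenvalue argument; your orthogonality step (every $\Pi$ occurring in $V_{0}$ has $\Pi_{f}^{L}=0$ for $L\neq L_{\text{\rm min}}$) is the spectral rephrasing of the paper's direct computation $\int_{L}P(f)(gl)\,dl=0$ in Lemma~\ref{lem-9}. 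One thing the paper's route buys that yours does not is control of the $K_{\infty}$-type: Lemma~\ref{lem-2} produces Poincar\'e series transforming by a prescribed $\delta\in\hat K_{\infty}$, and this finer information is exactly what is used in Section~\ref{sec-4} (Theorem~\ref{nthm} and its corollaries); your Casimir argument is cleaner and entirely sufficient for Theorem~\ref{intr-thm} as stated, but would need to be upgraded to recover those refinements.
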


\bigskip

Theorem \ref{intr-thm} is proved in Section \ref{sec-1}. 
For general $G$, in Sections \ref{sec-2} and \ref{sec-3}
we give examples of families of ${\mathcal F}$ satisfying assumption \eqref{assumptions} using Moy--Prasad filtration subgroups 
(\cite{MoyPr1}, \cite{MoyPr}).

\bigskip

In the case $G=SL_M$, for suitable open compact subgroups $L \subset G(\bbA_{f} )$, the congruence subgroups $\Gamma_{L}$ are the principal congruence subgroups $\Gamma(m)$ (see (\ref{principal-congruence})), and the main theorem has 
the following form:

\begin{Cor}\label{intr-thm-cor}  
Let $G=SL_M$. Let $n\ge 2$ be an integer. Then,  
the orthogonal complement of 
$$
\sum_{\substack{m| n\\ m< n }} L^2_{cusp}(\Gamma(m)\backslash G_\infty)
$$ 
in 
$L^2_{cusp}(\Gamma(n)\backslash G_\infty)$ is a direct sum of  
infinitely many irreducible unitary representations of $G_\infty$.
\end{Cor}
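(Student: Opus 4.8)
The plan is to deduce the corollary from Theorem \ref{intr-thm} by exhibiting, for the group $G = SL_M$ and a fixed integer $n \geq 2$, a finite family $\mathcal{F} = \{L\}$ of open compact subgroups of $G(\mathbb{A}_f)$ satisfying Assumptions \ref{assumptions} and such that $\Gamma_{L_{\text{\rm min}}} = \Gamma(n)$, while the groups $L \supsetneq L_{\text{\rm min}}$ in $\mathcal{F}$ give exactly the congruence subgroups $\Gamma(m)$ for the proper divisors $m \mid n$, $m < n$. First I would recall the standard construction: for a positive integer $m$, set $L(m) := \prod_p L(m)_p \subset G(\mathbb{A}_f)$, where $L(m)_p = \ker\big(SL_M(\mathbb{Z}_p) \to SL_M(\mathbb{Z}_p/p^{v_p(m)}\mathbb{Z}_p)\big)$ is the principal congruence subgroup of level $p^{v_p(m)}$ (so $L(m)_p = SL_M(\mathbb{Z}_p) = K_p$ for $p \nmid m$). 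One checks, via $G(\mathbb{Z}) \cap L(m) = \Gamma(m)$ and strong approximation, that the congruence subgroup attached to $L(m)$ is precisely the classical $\Gamma(m)$ defined in \eqref{principal-congruence}. Take $\mathcal{F} = \{ L(m) : m \mid n \}$, with $L_{\text{\rm min}} = L(n)$ since $L(m) \subset L(n)$ whenever $m \mid n$.

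Next I would verify the three parts of Assumptions \ref{assumptions}. Part (i) is immediate: $L(n) \subset L(m)$ for every $m \mid n$, so $L_{\text{\rm min}} = L(n)$ lies in all members of $\mathcal{F}$. Part (ii) holds by construction: each $L(m)$ is factorizable, and $L(m)_p = K_p = SL_M(\mathbb{Z}_p)$ for all $p \nmid m$, hence for all but finitely many $p$. The substance is in part (iii). Let $T$ be the (finite, non-empty) set of primes dividing $n$. For each $p \in T$, the group $SL_M$ is split over $\mathbb{Z}_p$ with the standard Borel $B = AU$ and diagonal torus $A$ defined over $\mathbb{Z}_p$; what I need is a supercuspidal representation $\pi_p$ of $SL_M(\mathbb{Q}_p)$ with $\pi_p^{L(n)_p} \neq 0$ but $\pi_p^{L(m)_p} = 0$ whenever $v_p(m) < v_p(n)$. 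Here is where I invoke the Moy--Prasad theory used in Sections \ref{sec-2} and \ref{sec-3}: one produces a supercuspidal representation whose minimal level of fixed vectors under the principal congruence filtration is exactly $p^{v_p(n)}$ — i.e., it has nonzero vectors fixed by the level-$v_p(n)$ principal congruence subgroup but no nonzero vectors fixed by the larger level-$(v_p(n)-1)$ one. Granting such a $\pi_p$ for each $p \in T$, take $\pi_p$ to be unramified (so $\pi_p^{K_p} \neq 0$) for $p \notin T$; then $\pi := \bigotimes_p \pi_p$ satisfies $\pi^{L(n)_p}_p \neq 0$ for all $p$, while for any $m \mid n$ with $m < n$ there is a prime $p \in T$ with $v_p(m) < v_p(n)$, forcing $\pi_p^{L(m)_p} = 0$. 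This is exactly condition (iii).

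With Assumptions \ref{assumptions} verified for this $\mathcal{F}$, Theorem \ref{intr-thm} applies verbatim and yields that the orthogonal complement of $\sum_{m \mid n,\ m < n} L^2_{cusp}(\Gamma(m) \backslash G_\infty)$ in $L^2_{cusp}(\Gamma(n) \backslash G_\infty)$ is a direct sum of infinitely many irreducible unitary representations of $G_\infty$, which is the assertion of the corollary. I expect the main obstacle to be the precise existence statement in part (iii): pinning down a supercuspidal representation of $SL_M(\mathbb{Q}_p)$ whose exact conductor/depth data produces the $K_p$-fixed-vector behavior $\pi_p^{L(n)_p} \neq 0 = \pi_p^{L(p^{v_p(n)-1})_p}$ at each ramified prime. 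This requires identifying, within the Moy--Prasad classification of supercuspidals, a representation built from an unrefined minimal $K$-type of the appropriate depth and checking its fixed vectors against the principal congruence subgroups — the kind of local computation carried out in Sections \ref{sec-2}--\ref{sec-3}, and one should simply quote the relevant example there rather than redo it. Everything else (the relation $\Gamma_{L(m)} = \Gamma(m)$, parts (i)--(ii), and the passage from $\pi$ to $\mathcal{F}$) is routine.
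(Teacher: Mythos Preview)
Your proposal is correct and follows essentially the same approach as the paper: the family $\mathcal{F} = \{L(m) : m \mid n\}$ you construct is exactly the family $\{K_d : d \mid D\}$ of Section~\ref{sec-3}, and the key local input you identify for Assumption~\ref{assumptions}(iii) --- a supercuspidal $\pi_p$ with $\pi_p^{\mathcal{K}_{p,v_p(n)}} \neq 0$ but $\pi_p^{\mathcal{K}_{p,v_p(n)-1}} = 0$ --- is precisely Proposition~\ref{SL-cusp-forms}(iv). (Note the slip in your first paragraph: you write ``$L(m) \subset L(n)$ whenever $m \mid n$'' where you mean $L(n) \subset L(m)$, as you correctly state a few lines later.)
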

\begin{proof} This follows directly from the examples in Section \ref{sec-3}.
\end{proof}

\vskip .2in 
In Section  \ref{sec-4}, we refine Corollary \ref{intr-thm-cor} (see Theorem \ref{nthm}). 
As a result, we obtain a generalization of  the compact quotient case (obtained
in \cite{Muic3}). The corresponding results are contained in Corollaries \ref{ncor-1}, 
\ref{ncor-2}, and \ref{ncor-3}. For example, in Corollary \ref{ncor-1}, 
we prove for sufficiently  large $n$ that we can take infinitely many 
spherical representations. Corollary \ref{ncor-3} improves (\cite{Muic2}, Theorem 0-2).

\bigskip

The initial research and first draft of the paper was performed when the second author visited The Hong Kong University of Science and Technology Mathematics Department in Spring 2010.  Final work and writing of the paper was done during a visit by the first author to the Mathematics Department of the University of Zagreb in Spring 2012.  The authors thank the Departments for their hospitality.   

\bigskip

\section{Proof of Theorem \ref{intr-thm}}\label{sec-1}

\bigskip

We recall  some results from \cite{Muic1}.  For $f \in C^\infty_c(G(\bbA))$, the adelic compactly supported Poincar{\'{e}} series  $ P(f)$ is 
defined as:
\begin{equation}\label{e-1}
P(f)(g) \ = \ \sum_{\gamma\in G(\bbQ)}  f(\gamma\cdot g).
\end{equation}

\noindent Write $g \in G(\bbA) = G_\infty\times G(\bbA_f)$ as $g=(g_\infty, g_f)$ . We have the following:
\begin{equation}\label{adel-arch-CCC-PPP}
P(f)(g_\infty, 1)=\sum_{\gamma\in G(\bbQ )} f (\gamma \cdot g_\infty, \gamma).
\end{equation}

\noindent The next lemma (\cite{Muic1}, Proposition 3.2) describes the restriction of the Poincar{\'{e}} series (\ref{e-1}) to $G_\infty$.  

\begin{Lem}\label{lem-1} Let $f \in C_c^\infty(G(\bbA))$. 
Assume that $L$ is an open compact subgroup of  $G(\bbA_f)$ such that $f$ is right--invariant under $L$.  Define the congruence subgroup $\Gamma_{L}$ of $G_\infty$ as in (\ref{int-1}).  Then:
\begin{itemize}
\item[(i)] The function in (\ref{adel-arch-CCC-PPP}) is a compactly supported Poincar{\'{e}} series on $G_\infty$ for $\Gamma_L$. 
\smallskip
\item[(ii)] If $P(f)$ is cuspidal, then  the function in (\ref{adel-arch-CCC-PPP}) is cuspidal for $\Gamma_L$ (see the Introduction for the definition).
\end{itemize}
\end{Lem}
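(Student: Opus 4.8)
The plan is to unwind the definition of the adelic Poincar\'e series using the strong approximation property, and then identify the resulting sum over $G_\infty$ with a classical Poincar\'e series for $\Gamma_L$. First I would write $f = f_\infty \otimes f_f$ (or a finite linear combination of such, by density) with $f_\infty \in C_c^\infty(G_\infty)$ and $f_f \in C_c^\infty(G(\bbA_f))$ right-invariant under $L$. Since $G$ is simply connected, absolutely almost simple with $G_\infty$ non-compact, strong approximation gives $G(\bbA_f) = G(\bbQ)L$, so the cosets $G(\bbQ)/\Gamma_L$ are in bijection with $G(\bbA_f)/L$ via the diagonal embedding; in particular $\supp(f_f) \cap \gamma L$ for $\gamma \in G(\bbQ)$ is non-empty only when $\gamma \in \Gamma_L \cdot S$ for a fixed finite set $S$ of representatives. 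Plugging into \eqref{adel-arch-CCC-PPP}, the sum over $\gamma \in G(\bbQ)$ collapses: only $\gamma$ in finitely many cosets of $\Gamma_L$ contribute, and for those the finite part $f_f(\gamma)$ is a constant on each coset. This rewrites $P(f)(g_\infty,1)$ as a finite linear combination of sums $\sum_{\gamma \in \Gamma_L} \Phi(\gamma \cdot g_\infty)$ with $\Phi \in C_c^\infty(G_\infty)$, i.e.\ as a compactly supported Poincar\'e series on $G_\infty$ for $\Gamma_L$, proving (i).

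For (ii), I would argue that cuspidality of the global object forces cuspidality of this archimedean restriction. The cleanest route: for a proper $\bbQ$-parabolic $P = M U_P$ of $G$, the constant term along $U_P$ of $P(f)$ as an automorphic form on $G(\bbQ)\backslash G(\bbA)$ is $\int_{U_P(\bbQ)\backslash U_P(\bbA)} P(f)(ug)\,du$, and this vanishes identically by hypothesis. Restricting the integration variable to $U_P(\bbR)$ and the evaluation point to $g = (g_\infty, 1)$, and using that the adelic integral factors as a product of local integrals (with the non-archimedean factors at each place being integrals of the locally constant $L$-invariant function $f_f$ against the compact group $U_P(\bbZ_p)\cap(\text{something})$, all nonzero constants for the relevant finitely many $\gamma$), one recovers exactly the classical cuspidality integral $\int_{U_P(\bbR)\cap\Gamma_L \backslash U_P(\bbR)} (\text{archimedean Poincar\'e series})(u g_\infty)\,du = 0$. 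Concretely: since $P(f)(g_\infty,1)$ has already been identified in part (i) with an honest $\Gamma_L$-Poincar\'e series $\varphi$ on $G_\infty$, and since the adelic constant-term integral restricted to the archimedean place computes (up to the nonzero volume/constant factors coming from the finite places) the $\Gamma_L$-constant term of $\varphi$, the vanishing of the former yields the vanishing of the latter.

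The main obstacle is the bookkeeping in (ii): one must check that restricting the adelic unipotent integral $\int_{U_P(\bbQ)\backslash U_P(\bbA)}$ to its archimedean component genuinely reproduces the classical integral over $U_P(\bbR)\cap\Gamma_L\backslash U_P(\bbR)$, rather than something that only dominates or is dominated by it. This requires pinning down that $U_P(\bbQ)\backslash U_P(\bbA) \cong \big(U_P(\bbR)\cap\Gamma_L\backslash U_P(\bbR)\big) \times \big(\text{compact finite part}\big)$ compatibly with the congruence structure coming from $L$, using strong approximation inside the unipotent group $U_P$ and the factorizability of $L$, and then that the integrand $P(f)$, being $L$-invariant on the finite part, is constant along that compact factor so the finite integration only contributes a positive constant. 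Once this product decomposition and constancy are in hand, the implication is immediate. Everything else — the rearrangement in (i), the interchange of the (finite) sum with evaluation — is routine given strong approximation and the compact support of $f$.
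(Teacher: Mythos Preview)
The paper does not give its own proof of this lemma: it is simply quoted as Proposition~3.2 of \cite{Muic1}, so there is nothing in the present paper to compare your argument against directly.

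That said, your sketch is essentially correct and is the standard argument one would expect to find in \cite{Muic1}. Two small comments. For part~(i), you do not actually need to decompose $f$ as a tensor $f_\infty\otimes f_f$; the argument works uniformly for any $f\in C_c^\infty(G(\bbA))$ right--invariant under $L$: the finite-adelic support of $f$ is contained in finitely many cosets $sL$, strong approximation lets you take the representatives $s\in G(\bbQ)$, and then $\Phi(x):=\sum_{s} f(sx,s)$ is the compactly supported function on $G_\infty$ whose $\Gamma_L$-Poincar\'e series equals $P(f)(g_\infty,1)$. For part~(ii), your identification of the ``obstacle'' is exactly the point: strong approximation in the unipotent group $U_P$ gives $U_P(\bbA)=U_P(\bbQ)\cdot\big(U_P(\bbR)\times(L\cap U_P(\bbA_f))\big)$ with stabilizer $\Gamma_L\cap U_P(\bbR)$, and right $L$-invariance of $P(f)$ makes the integrand constant on the compact finite factor, so the adelic constant term at $g=(g_\infty,1)$ is a positive volume times the $\Gamma_L$-constant term of $\varphi$. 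No factorizability of $L$ is needed for this step, only that $L$ is open.
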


We recall that $P(f)$ is cuspidal if 
$$
\int_{U_P(\mathbb Q)\backslash U_P(\mathbb A)}P(f)(ug)du=0, \ \ g\in G(\mathbb A),
$$
where $U_P$ is the unipotent radical of any proper $\mathbb Q$--parabolic subgroup.

Let $S$ be a finite set of places, containing $\infty$, and large enough so that $G$ is defined over $\Bbb Z_p$  for $p\not\in S$.   We use the decomposition of $G(\bbA)$ given by: 
\begin{equation}\label{e-2}
G(\bbA)=G_S\times G^S \, , {\text{\rm{ where }}} G_S := {\underset {p\in S} \prod} G(\bbQ_p) \, , {\text{\rm{ and }}} G^S={\underset {p \not\in S} {{\prod}\,'}} G(\bbQ_p) \ .
\end{equation}  
\noindent Set $G^S(\bbZ_p) :={\underset {p \not\in S} \prod} G(\bbZ_p)$, and 
\begin{equation}\label{e-200}
\Gamma (S) \ := \ G^S(\bbZ_p) \cap G(\bbQ) \ \ \text{(the intersection is taken in $G^S$).}
\end{equation}
We view $\Gamma (S) \subset G(\bbQ ) \subset G(\bbA )$.  Set
$$
\Gamma_{S} \ = \ {\text{\rm{image of $\Gamma (S)$ under the projection map $G(\bbA ) = G_S \times G^S \rightarrow G_S$}}}\, .
$$ 
Since $G(\bbQ)$ is a discrete subgroup of $G(\bbA)$, it follows  that $\Gamma_S$ is a discrete subgroup of $G_S$.

\bigskip

Set 
\begin{equation}\label{finte-places-of-S}
S_{f} \ := \ {\text{\rm{ the set of finite places in $S$.}}}
\end{equation} 

\noindent For each $p\in S_{f}$, we choose an open--compact subgroup $L_p$, and we set 
\begin{equation}\label{e-20}
\begin{aligned}
L \ &= \ \Big( \, {\underset {p\in S_{f}} \prod} L_p \ \Big) \  \times \  G^S(\bbZ_{p})\\
\Gamma_{L} \ &= \ L\cap G(\bbQ) \ = \ \Big( \,  \prod_{p\in S_{f}} L_{p} \ \Big) \ \cap \ \Gamma_S 
  \ .
\end{aligned}
\end{equation}
The group $\Gamma_{L}$ is a discrete subgroup of $G_\infty$.

\medskip

Let $\mathfrak g_\infty$ be the (real) Lie algebra of $G_{\infty}$, and $K_\infty$  a maximal compact subgroup.  We have the following non-vanishing criterion (\cite{Muic1}, Theorem 4.2):

\medskip

\begin{Lem}\label{lem-2} 
Assume that for each prime $p$ we have a function 
$f_{p} \in C_c^\infty(G(\bbQ_{p}))$ so that $f_{p}(1)\neq 0$, and 
 $f_{p}=1_{G(\bbZ_{p})}$ is the characteristic function of $G(\bbZ_{p})$ for all $p\not\in S$.  Assume further that for $p \in S_{f}$, that $L_p$ is an open-compact subgroup such that $f_p$ is right-invariant under $L_p$.  Note:  Since the set $\big( \, K_\infty \times {\underset {p\in S_{f}} \prod} \supp{\ (f_p)} \, \big)$ is compact, the intersection
$\Gamma_S \cap \big( \, K_\infty \times {\underset {p\in S_{f}} \prod} \supp{\ (f_p)} \, \big)$ is a finite set.  It can be written
as follows: 
\begin{equation}\label{e-3}
{\underset {j=1} {\overset l \bigcup}} \ \ \gamma_j \cdot (K_\infty\cap \Gamma_{L}).
\end{equation}
Set 
$$
c_j \ = \ {\underset {p\in S_{f}} \prod} \ f_{p}(\gamma_j).
$$
Then, the $K_\infty$-invariant map
$C^\infty(K_\infty)\longrightarrow C^\infty(K_\infty\cap \Gamma_{L}\setminus K_\infty)$ given by 
\begin{equation}\label{e-4}
\alpha  \ \mapsto \ \hat{\alpha}(k) \ := \ k \ \mapsto \ \sum_{j=1}^l\sum_{\gamma\in K_\infty\cap \Gamma} \ c_j\cdot
\alpha(\gamma_j \gamma \cdot k) 
\end{equation}
is non--trivial, and, for every $\delta\in \hat{K}_\infty$, contributing to
the decomposition of the closure of the image of (\ref{e-4}) 
in  $L^2(K_\infty\cap \Gamma_{L} \setminus K_\infty)$, we can find a non-trivial $f_\infty \in
C_c^\infty(G_\infty)$ so that the following hold:
\begin{itemize}
\item[(i)] $E_\delta(f_\infty)=f_\infty$. 
\item[(ii)] The Poincar\' e series $P(f)$ and its restriction to
 $G_\infty$ (which is a Poincar\' e series for $\Gamma_L$) are 
 non--trivial, where $f \, := \, f_\infty\otimes_{p} f_{p}  \in
C_c^\infty(G(\bbA))$. 
\item[(iii)]  $E_\delta(P(f))=P(f)$ and $P(f)$ is right--invariant under $L$.
\item[(iv)] The support of $P(f)|_{G_\infty}$ is contained in a set of the form $\Gamma_{L}\cdot C$, where 
$C$ is a compact set which is right--invariant under 
$K_\infty$, and  $\Gamma_{L}\cdot C$ is not the whole of $G_\infty$. 
\end{itemize}
\end{Lem}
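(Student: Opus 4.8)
The plan is to take the archimedean component $f_\infty$ supported in a small, right $K_\infty$-stable neighborhood of $K_\infty$, to evaluate the restriction of $P(f)$ to $K_\infty$ explicitly, to recognize it as the map \eqref{e-4} up to a nonzero scalar, and then to project onto the $K_\infty$-type $\delta$; property (iv) will come out of the same support analysis performed at a general point. Concretely, I would fix the Cartan decomposition $G_\infty = K_\infty\exp(\frakp)$ ($\frakg_\infty = \frakk\oplus\frakp$) and an $\mathrm{Ad}(K_\infty)$-invariant norm on $\frakp$; for small $\epsilon > 0$ put $C^{\epsilon} := K_\infty\exp(B_\epsilon)$ with $B_\epsilon\subset\frakp$ the $\epsilon$-ball, a compact neighborhood of $K_\infty$ satisfying $C^{\epsilon}k = C^{\epsilon}$ for all $k\in K_\infty$ (as $\mathrm{Ad}(k)$ preserves $B_\epsilon$), and take $f_\infty(k\exp X) := \alpha(k)\psi(X)$ with $\alpha\in C^\infty(K_\infty)$, $\psi\in C_c^\infty(B_\epsilon)$, $\psi(0) = 1$, so that $\supp(f_\infty)\subseteq C^{\epsilon}$ and $f_\infty|_{K_\infty} = \alpha$. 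Inserting $f = f_\infty\otimes_p f_p$ into \eqref{adel-arch-CCC-PPP}, for $k\in K_\infty$ one gets $P(f)(k,1) = \sum_{\gamma\in G(\bbQ)}f_\infty(\gamma_\infty k)\prod_p f_p(\gamma_p)$; the factors $1_{G(\bbZ_p)}$ for $p\notin S$ confine $\gamma$ to $\Gamma_S$, and the support conditions $\gamma_\infty k\in C^{\epsilon}$, $\gamma_p\in\supp(f_p)$ ($p\in S_f$) confine the image of $\gamma$ in $G_S$ to $\Gamma_S\cap(C^{\epsilon}\times\Omega)$, where $\Omega := \prod_{p\in S_f}\supp(f_p)$. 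This is a finite set (a discrete group meeting a compact set) and, since $C^{\epsilon}k = C^{\epsilon}$, it does not depend on $k$; for $\epsilon$ small, discreteness forces it to equal $\Gamma_S\cap(K_\infty\times\Omega) = \bigsqcup_{j=1}^{l}\gamma_j(K_\infty\cap\Gamma_L)$ as in \eqref{e-3}. On a term $\gamma_j\gamma$ with $\gamma\in K_\infty\cap\Gamma_L$, right $L_p$-invariance of $f_p$ collapses $\prod_p f_p$ to $c_j$ and the archimedean part stays in $K_\infty$; hence $P(f)(k,1) = \hat\alpha(k)$, i.e., \eqref{e-4} is exactly $\alpha\mapsto P(f_\infty\otimes_p f_p)\big|_{K_\infty}$.

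Non-triviality of \eqref{e-4} is then immediate: $\hat\alpha = \mu\ast\alpha$ for the measure $\mu := \sum_{\gamma'\in\Gamma_S\cap(K_\infty\times\Omega)}\big(\prod_p f_p(\gamma'_p)\big)\delta_{(\gamma'_\infty)^{-1}}$ on $K_\infty$, and since $G(\bbQ)$ embeds into $G(\bbR)$ the projection $\Gamma_S\to G_\infty$ is injective, so the atoms of $\mu$ are pairwise distinct while the one coming from $\gamma' = 1$ has mass $\prod_{p\in S_f}f_p(1)\neq 0$; thus $\mu\neq 0$. The image of \eqref{e-4} is stable under right $K_\infty$-translation, so its $L^2$-closure in $L^2(K_\infty\cap\Gamma_L\backslash K_\infty)$ decomposes into $K_\infty$-irreducibles; let $\delta$ be one. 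I would then choose $\alpha$ with $E_\delta(\hat\alpha)\neq 0$ (possible by density) and replace $f_\infty$ by $E_\delta(f_\infty)$, the projection onto the $\delta$-isotypic part for right $K_\infty$-translation, which is still supported in $C^{\epsilon}$ and restricts on $K_\infty$ to $E_\delta(\alpha)$. Because $E_\delta$ is right convolution by a class function on $K_\infty$, it commutes with the summation over $G(\bbQ)$ and with restriction to $G_\infty$; hence $E_\delta(P(f)) = P(f)$ for the new $f$ — giving (i), and (iii) once one notes that $P(f)$ is right $L$-invariant because $f$ is — while $P(f)(k,1) = E_\delta(\hat\alpha)(k)\neq 0$ gives (ii) together with $f_\infty\neq 0$.

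For (iv) I would rerun the support computation at a general $g_\infty$: a nonzero term of $P(f)(g_\infty, 1)$ requires $\gamma\in\Gamma_S$ with $\gamma_{S_f}\in\Omega$ and $\gamma_\infty g_\infty\in\supp(f_\infty)\subseteq C^{\epsilon}$. Since $\Omega$ is compact and $\prod_{p\in S_f}L_p$ is open, $\{\gamma\in\Gamma_S : \gamma_{S_f}\in\Omega\}$ is contained in a finite union $\bigcup_{j=1}^{r}\gamma'_j\Gamma_L$ of cosets (because $\Omega$ meets only finitely many cosets of $\prod_{p\in S_f}L_p$, and $\Gamma_L = \Gamma_S\cap\prod_{p\in S_f}L_p$); writing $\gamma = \gamma'_j\delta$ with $\delta\in\Gamma_L$ turns the constraint into $g_\infty\in\delta_\infty^{-1}(\gamma'_j)_\infty^{-1}C^{\epsilon}$, whence $\supp(P(f)|_{G_\infty})\subseteq\Gamma_L\cdot C$ with $C := \bigcup_{j=1}^{r}(\gamma'_j)_\infty^{-1}C^{\epsilon}$, compact and right $K_\infty$-invariant. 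Finally the image of $\Gamma_L\cdot C$ in the quotient $\Gamma_L\backslash G_\infty/K_\infty$ is a union of $r$ metric balls of radius $\sim\epsilon$; since $G_\infty$ is non-compact this quotient has positive dimension and so is not covered by finitely many small balls once $\epsilon$ is small, whence $\Gamma_L\cdot C\neq G_\infty$.

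The hard part is the explicit evaluation $P(f)|_{K_\infty} = \hat\alpha$: reducing the a priori infinite Poincar\'e sum over $G(\bbQ)$ to the finite packet $\Gamma_S\cap(K_\infty\times\Omega)$ rests on discreteness of $\Gamma_S$ together with the $\mathrm{Ad}(K_\infty)$-invariance of $B_\epsilon$ that decouples the support condition from $k$. A secondary difficulty is (iv): the inequality $\Gamma_L\cdot C\neq G_\infty$ must be arranged by shrinking $\supp(f_\infty)$ — it is automatic when $\Gamma_L\backslash G_\infty$ is non-compact, but requires the symmetric-space argument above when the quotient is compact. The commutation of $E_\delta$ with the Poincar\'e construction and its compatibility with non-vanishing are routine but should be recorded.
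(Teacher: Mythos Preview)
The paper does not prove this lemma at all; it simply records it as the non-vanishing criterion of \cite{Muic1}, Theorem~4.2, and cites that reference. Your argument is a correct and essentially self-contained reconstruction of that result: the key device---taking $f_\infty$ of the form $\alpha(k)\psi(X)$ on the Cartan neighborhood $C^\epsilon=K_\infty\exp(B_\epsilon)$, using right $K_\infty$-invariance of $C^\epsilon$ to make the support condition on $\gamma_\infty$ independent of $k$, and shrinking $\epsilon$ so that $\Gamma_S\cap(C^\epsilon\times\Omega)=\Gamma_S\cap(K_\infty\times\Omega)$---is exactly the mechanism that produces the identity $P(f)|_{K_\infty}=\hat\alpha$ and hence the map \eqref{e-4}. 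The remaining points (non-triviality via the nonzero atom of $\mu$ at the identity, commutation of $E_\delta$ with the Poincar\'e sum, and the coset-counting support bound for (iv)) are handled correctly.

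Two small remarks. First, your non-triviality argument for \eqref{e-4} (distinct atoms, one with nonzero mass) is cleaner than merely observing $\mu\neq 0$; you might add one sentence that a finite sum of distinct Dirac masses annihilates $C^\infty(K_\infty)$ under convolution only if all coefficients vanish (take $\alpha$ a bump at $1$). Second, in (iv) your final clause ``not covered by finitely many small balls'' is fine but could be phrased as a volume comparison: the image of $C$ in $\Gamma_L\backslash G_\infty/K_\infty$ has Haar measure tending to $0$ with $\epsilon$, while the quotient has positive (possibly infinite) volume since $\dim(G_\infty/K_\infty)>0$; this covers the cocompact and non-cocompact cases uniformly.
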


\vskip .2in 
We begin the proof of Theorem \ref{intr-thm}.  We apply the above considerations to $L_{\text{\rm{min}}}$.  By hypothesis, this group is factorizable.  Take $S$ sufficiently large so that it contains $T$, and if $p\not\in S$ then the group $G$ is unramified over $\Bbb Q_p$ so that it is defined over $\Bbb Z_p$, and $L_{\text{\rm{min}}, p}=G(\Bbb Z_p)$. Thus, (\ref{e-20}) holds for $L=L_{\text{\rm{min}}}$. We apply Lemma \ref{lem-2}.  To do this, we construct functions $f_{p} \in C_c^\infty(G(\bbQ_{p}))$ such that $f_{p}(1)\neq 0$, $f_p$ is $L_p$-invariant on the right, and $f_{p}=1_{G(\bbZ_{p})}$ for all $p\not\in S$.  We need to define $f_p$ for $p\in S_{f}$.  We let $f_p=1_{L_{\text{\rm{min}}, p}}$ for $p \in S_{f} \, \backslash \, T$.  For $p\in T$, we use our assumption that there exists a supercuspidal representation $\pi_p$ such that $\pi^{L_{\text{\rm{min}}, p}}_p\neq 0$.  We let 
$f_p$ be a matrix coefficient of  $\pi_p$ such that  $f_{p}(1)\neq 0$ and 
$f_p$ is $L_{\text{\rm{min}}, p}$-invariant on the right.  

\medskip

Our construction of the functions $f_p$ for all finite $p$ satisfy the assumptions of Lemma \ref{lem-2}.  Therefore, by that Lemma, we can select $\delta\in \hat{K}_\infty$ and $f_\infty \in C_c^\infty(G_\infty)$ such that (i)---(iv) of Lemma \ref{lem-2} hold. 

\vskip .2in

We decompose the cuspidal part $L_{cusp}^2(G(k)\setminus G(\bbA))$ into closed irreducible $G(\bbA)$ invariant subspaces: 
\begin{equation}\label{e-50}
L_{cusp}^2(G(k)\setminus G(\bbA))=\oplus_j \  {\mathfrak H}_j \ .
\end{equation}

We prove the following lemma: 

\begin{Lem}\label{lem-3}  We keep the above assumptions. 
Then,
$$
P(f)\in  L_{cusp}^2(G(\bbQ)\setminus G(\bbA))^{L_{\text{\rm{min}}}},
$$ 
and we can decompose according to (\ref{e-50})
\begin{equation}\label{e-5}
P(f)= \sum_j \psi_j, \ \ \psi_j\in {\mathfrak H}_j \ . 
\end{equation}
Then, we have the following:
\begin{itemize} 
\item [(i)] For all $j$, $\psi_j\in \cal
  A_{cusp}(G(\bbQ)\setminus G(\bbA))$ is right--invariant under $L_{\text{\rm{min}}}$,
  and transforms according to $\delta$ i.e.,
  $E_\delta(\psi_j)=\psi_j$. 
\item [(ii)]  Assume $\psi_j\neq 0$. Then  $\pi_{p}^j\simeq \pi_{p}$ for all
   $p \in T$.
\item [(iii)]  The number of indices $j$ in (\ref{e-5}) such that $\psi_j\neq 0$ is
infinite. 
\item [(iv)] The closure of the $G_\infty$--invariant subspace in 
$L^2_{cusp}(\Gamma_{L_{\text{\rm{min}}}}\setminus G_\infty)$ generated by $P(f)|_{G_\infty}$ is an orthogonal direct sum 
of infinitely many inequivalent irreducible unitary representations of $G_\infty$ 
which contain $\delta$.
\end{itemize}
\end{Lem}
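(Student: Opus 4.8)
The plan is to prove the four parts of Lemma~\ref{lem-3} in order, using Lemma~\ref{lem-2} as input together with standard properties of the cuspidal spectrum. First I would establish that $P(f)$ lands in $L^2_{cusp}(G(\bbQ)\backslash G(\bbA))^{L_{\text{\rm{min}}}}$: the right $L_{\text{\rm{min}}}$--invariance is part (iii) of Lemma~\ref{lem-2}; membership in the cuspidal spectrum should follow because $f$ has a supercuspidal matrix coefficient as its $p$--component for each $p\in T$, which forces the constant term of $P(f)$ along every proper $\bbQ$--parabolic to vanish (a matrix coefficient of a supercuspidal representation is annihilated by every proper parabolic's Jacquet functor, so the relevant integral of $P(f)$ against unipotent radicals vanishes), and square-integrability follows from the compact support of $f$ together with cuspidality via the standard fact that cuspidal automorphic forms that are $K$--finite and $Z$--finite are rapidly decreasing. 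Granting this, the decomposition \eqref{e-5} is just the projection of $P(f)$ onto the summands ${\mathfrak H}_j$ of \eqref{e-50}; since each ${\mathfrak H}_j$ is $G(\bbA)$--invariant, the right $L_{\text{\rm{min}}}$--invariance and the $E_\delta$--equivariance from Lemma~\ref{lem-2}(iii) pass to each $\psi_j$, and each $\psi_j$ lies in the smooth automorphic vectors ${\mathcal A}_{cusp}$ because it is $K$--finite (being $\delta$--isotypic at $\infty$ and $L_{\text{\rm{min}}}$--fixed at the finite places) and ${\mathfrak Z}$--finite; this gives part (i).

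For part (ii), I would argue locally at each $p\in T$. Write ${\mathfrak H}_j \simeq \otimes'_v \pi^j_v$. The vector $\psi_j$ is the image of $P(f)$ under the orthogonal projection, and $P(f)$ is built from the matrix coefficient $f_p$ of $\pi_p$ at each $p\in T$; convolution by $f_p$ acts on $\pi^j_p$ as a (scalar multiple of a) projection onto the $\pi_p$--isotypic part of $\pi^j_p$ when $\pi^j_p \simeq \pi_p$ and kills $\pi^j_p$ otherwise --- this is the defining property of a supercuspidal matrix coefficient acting by convolution, using that supercuspidal representations are both projective and injective in the category of smooth representations and that their matrix coefficients are essentially idempotents in the Hecke algebra up to the formal degree. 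Hence if $\psi_j \neq 0$ then the $p$--component cannot be killed, so $\pi^j_p \simeq \pi_p$ for every $p\in T$.

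For part (iii) I would argue by contradiction: if only finitely many $\psi_j$ were nonzero, then $P(f)|_{G_\infty}$, being a finite sum of the archimedean components of finitely many cusp forms, would generate a finite direct sum of irreducible unitary $G_\infty$--representations, hence in particular would span a finite-dimensional space of $K_\infty$--types after projecting to the $\delta$--isotypic part; but more directly, a finite sum of cuspidal automorphic forms that are all right $L_{\text{\rm{min}}}$--invariant and $\delta$--isotypic, with the strong multiplicity-one/finiteness failing, contradicts the non-vanishing and support statement of Lemma~\ref{lem-2}(iv). The cleanest route: the space $V$ spanned by right translates of $P(f)|_{G_\infty}$ would be a finite sum of irreducibles and hence, by the argument in \cite{Muic1} (or by a Godement-type estimate), every element of $V$ would be real-analytic and the support condition $\supp(P(f)|_{G_\infty}) \subset \Gamma_{L_{\text{\rm{min}}}}\cdot C$ with $\Gamma_{L_{\text{\rm{min}}}}\cdot C \neq G_\infty$ would force $P(f)|_{G_\infty} = 0$ by analytic continuation along the group, contradicting Lemma~\ref{lem-2}(ii) non-vanishing. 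I expect this step --- turning the ``proper support'' condition into infinitude of the spectral support --- to be the main obstacle, and it is exactly where the method of \cite{Muic1} is essential: one uses that a nonzero compactly supported Poincar\'e series whose support is not all of $G_\infty$ cannot be a finite sum of matrix coefficients (which are real-analytic and, being in $L^2$ on the quotient, cannot vanish on an open set without vanishing identically).

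Finally, part (iv) follows by assembling (i)--(iii): restriction to $G_\infty$ is $G_\infty$--equivariant, the $G_\infty$--span of $P(f)|_{G_\infty}$ inside $L^2_{cusp}(\Gamma_{L_{\text{\rm{min}}}}\backslash G_\infty)$ decomposes, via Lemma~\ref{lem-1} identifying this space with the $L_{\text{\rm{min}}}$--invariants of $L^2_{cusp}(G(\bbQ)\backslash G(\bbA))$ restricted appropriately, as the closure of $\bigoplus_j ({\mathfrak H}_j)^{L_{\text{\rm{min}},f}}$ over those $j$ with $\psi_j\neq 0$; each such summand is an irreducible unitary $G_\infty$--representation (irreducibility of the archimedean component times nonvanishing of the finite-part invariants), it contains $\delta$ by (i), and by (iii) there are infinitely many of them. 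Inequivalence of the summands is not automatic from (i)--(iii) alone, so I would either invoke that cusp forms decompose with finite multiplicities and collect equivalent constituents into the same isotypic block --- which still leaves infinitely many blocks --- or, more sharply, note that for $SL_M$ and the split cases multiplicity one at the archimedean place can be arranged; in general the statement should be read as ``infinitely many irreducible summands counted with multiplicity,'' which is what the contradiction argument in (iii) actually delivers.
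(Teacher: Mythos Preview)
Your proposal is correct and reconstructs essentially the arguments behind the paper's proof, which consists solely of a citation to \cite{Muic1}, Proposition~5.3 and Theorem~7.2 (with the remark that (iv) comes from the proof of the latter). Your one hesitation---the inequivalence claim in (iv)---is in fact handled by the very mechanism you use for (iii): if only finitely many \emph{inequivalent} $G_\infty$--representations occurred among the $\pi^j_\infty$ with $\psi_j\neq 0$, then $P(f)|_{G_\infty}$ would be ${\mathfrak Z}({\mathfrak g}_\infty)$--finite (only finitely many infinitesimal characters), hence real-analytic, and Lemma~\ref{lem-2}(iv) would again force $P(f)|_{G_\infty}=0$; so no appeal to multiplicity one or to grouping into isotypic blocks is needed, and the statement holds as written.
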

\begin{proof} This follows from (\cite{Muic1}, Proposition 5.3 and Theorem 7.2). We remark that 
the formulation of (iv) follows from the proof of 
(\cite{Muic1}, Theorem 7.2). 
\end{proof}

\vskip .2in

\begin{Lem}\label{lem-4} Let $L\in \mathcal F$. Then, the restriction map gives an isomorphism of
unitary $G_\infty$ representations
$$
L^2_{cusp}(G(\bbQ)\setminus G(\bbA))^L\simeq  L^2_{cusp}(\Gamma_{L}\setminus G_\infty),
 $$
which is norm preserving up to a scalar $vol_{G(\bbA_f)}(L)$ i.e., 
$$
\int_{G(\bbQ)\setminus G(\bbA)} |\psi(g)|^2 dg=vol_{G(\bbA_f)}(L)
\int_{\Gamma_{L}\setminus G_\infty} |\psi(g_\infty)|^2 dg_\infty.
$$
\end{Lem}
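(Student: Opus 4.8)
The plan is to derive everything from the strong approximation property, which under the standing hypotheses on $G$ gives $G(\bbA_f)=G(\bbQ)L$ and hence $G(\bbA)=G(\bbQ)\cdot(G_\infty\times L)$. First I would record the resulting description of the quotient: since $G(\bbQ)\hookrightarrow G_\infty$ is injective and $\Gamma_{L}=G(\bbQ)\cap L$ acts on $G_\infty\times L$ by $\gamma\cdot(g_\infty,\ell)=(\gamma_\infty g_\infty,\gamma_f\ell)$ (note $\gamma_f\in L$), strong approximation yields a measure-space identification $G(\bbQ)\backslash G(\bbA)\cong \Gamma_{L}\backslash(G_\infty\times L)$. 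Projecting onto the first factor presents $G(\bbQ)\backslash G(\bbA)$ as a bundle over $\Gamma_{L}\backslash G_\infty$ whose fibres are each isomorphic to $L$ as a measure space; the one point to check is that $\gamma\in\Gamma_{L}$ with $\gamma_\infty g_\infty=g_\infty$ forces $\gamma=1$. It follows that a right-$L$-invariant function $\psi$ on $G(\bbQ)\backslash G(\bbA)$ is determined by $\phi:=\psi|_{G_\infty\times\{1\}}$; that $\phi$ is left $\Gamma_{L}$-invariant on $G_\infty$ (combine $\Gamma_{L}$-invariance of $\psi$ with right-$L$-invariance, using $\gamma_f\in L$); and, conversely, that every left-$\Gamma_{L}$-invariant $\phi$ lifts, through the strong approximation decomposition, to a well-defined right-$L$-invariant $\psi$. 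Integrating $|\psi|^2$ over the $L$-fibres with compatibly normalized Haar measures gives the stated identity $\int_{G(\bbQ)\backslash G(\bbA)}|\psi|^2=vol_{G(\bbA_f)}(L)\int_{\Gamma_{L}\backslash G_\infty}|\phi|^2$, so restriction is a scalar multiple of an isometry $L^2(G(\bbQ)\backslash G(\bbA))^L\to L^2(\Gamma_{L}\backslash G_\infty)$; and since right translation by $h\in G_\infty$ on $G(\bbA)$ moves only the archimedean coordinate, this map intertwines the two right-regular $G_\infty$-actions.

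It then remains to match the two cuspidality conditions under this isomorphism. Fix a proper $\bbQ$-parabolic $P$ with unipotent radical $U_P$, and put $V:=U_P(\bbA_f)\cap L$, an open subgroup of $U_P(\bbA_f)$. Strong approximation for the unipotent group $U_P$ gives $U_P(\bbA)=U_P(\bbQ)\cdot(U_P(\bbR)\times V)$, and one checks $U_P(\bbQ)\cap V=U_P(\bbR)\cap\Gamma_{L}$ (a $\bbQ$-point of $G$ lying in $U_P(\bbR)$ lies in $U_P(\bbQ)$). A computation parallel to the one above then gives, for right-$L$-invariant $\psi$ and $\phi=\psi|_{G_\infty}$,
$$
\int_{U_P(\bbQ)\backslash U_P(\bbA)}\psi\big(u\,(g_\infty,1)\big)\,du\;=\;vol(V)\int_{(U_P(\bbR)\cap\Gamma_{L})\backslash U_P(\bbR)}\phi(u\,g_\infty)\,du ,
$$
so cuspidality of $\psi$ immediately implies cuspidality of $\phi$. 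For the reverse implication I would use that, for $\gamma\in G(\bbQ)$, the constant term transforms along the conjugate parabolic, $\psi_P(\gamma h)=\psi_{\gamma^{-1}P\gamma}(h)$, which follows by the substitution $u\mapsto\gamma u\gamma^{-1}$ in the defining integral (the product formula makes this substitution measure preserving) together with the left $G(\bbQ)$-invariance of $\psi$. Given an arbitrary $g\in G(\bbA)$, write $g=\gamma\,(g_\infty,\ell)$ by strong approximation; then, using that $\psi_{\gamma^{-1}P\gamma}$ inherits right-$L$-invariance from $\psi$, one has $\psi_P(g)=\psi_{\gamma^{-1}P\gamma}((g_\infty,1))$, which by the displayed identity is a positive multiple of the $(\gamma^{-1}P\gamma)$-constant term of $\phi$ at $g_\infty$ and hence vanishes, since $\phi$ is cuspidal for every proper $\bbQ$-parabolic and $\gamma^{-1}P\gamma$ is one. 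Thus $\phi$ cuspidal implies $\psi$ cuspidal, and the two notions correspond.

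The step I expect to be the main obstacle is exactly this last one: cuspidality of $\psi$ on $G(\bbQ)\backslash G(\bbA)$ is a condition on constant terms at \emph{all} points $g$, so it is not enough to evaluate at the base points $(g_\infty,1)$, and the conjugate-parabolic identity $\psi_P(\gamma h)=\psi_{\gamma^{-1}P\gamma}(h)$ is precisely what lets one reduce a general $g$ to that case. Everything else — the measure-theoretic identification of the quotients, the volume constant, and the $G_\infty$-equivariance — is elementary bookkeeping with Haar normalizations; and since much of this setup is already present in \cite{BJ} and in \cite{Muic1}, Section 3, I would cite those for the routine parts and record only the cuspidality equivalence in detail.
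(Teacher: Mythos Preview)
Your proposal is correct and follows the same approach as the paper: both rest on strong approximation $G(\bbA_f)=G(\bbQ)L$ and the resulting identification of $G(\bbQ)\backslash G(\bbA)$ with $\Gamma_{L}\backslash(G_\infty\times L)$, with the paper simply citing the computations in \cite{Muic1}, Theorem~7-2 (specifically (7-6) there) rather than spelling them out. Your detailed treatment of the cuspidality equivalence---especially the reduction of a general $g$ to $(g_\infty,1)$ via the conjugate-parabolic identity $\psi_P(\gamma h)=\psi_{\gamma^{-1}P\gamma}(h)$---is exactly the content that the reference supplies.
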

\begin{proof} Since we assume that $G$ is simply connected, absolutely almost simple over $\bbQ$ and $G_\infty$ is not 
compact,  it satisfies the strong approximation property:  we have $G(\Bbb A_f)=G(\Bbb Q) L$. 
Now, the claim  is a particular case of the  computations contained in the proof of (\cite{Muic1}, Theorem 7-2, see 
(7-6) there). 
\end{proof}

\vskip .2in

\begin{Lem}\label{lem-5} Let $L\in \mathcal F$. Then, the natural embedding is an embedding 
of unitary $G_\infty$--representations:
$$
L^2_{cusp}(\Gamma_{L}\setminus G_\infty) \hookrightarrow L^2_{cusp}(\Gamma_{L_{\text{\rm{min}}}}\setminus G_\infty).
$$
This means it is norm preserving up to a scalar $\left[
\Gamma_L:\Gamma_{L_{\text{\rm{min}}}}\right]$ i.e., 
$$
\int_{\Gamma_{L_{\text{\rm{min}}}}\setminus G_\infty} |\psi(g_\infty)|^2 dg_\infty=\left[
\Gamma_L:\Gamma_{L_{\text{\rm{min}}}}\right]
\int_{\Gamma_{L}\setminus G_\infty} |\psi(g_\infty)|^2 dg_\infty,
$$
for $\psi \in L^2_{cusp}(\Gamma_{L}\setminus G_\infty)$.
\end{Lem}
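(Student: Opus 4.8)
The plan is to deduce this embedding from Lemma \ref{lem-4} together with the strong approximation property, realizing both $L^2_{cusp}(\Gamma_L\setminus G_\infty)$ and $L^2_{cusp}(\Gamma_{L_{\text{\rm{min}}}}\setminus G_\infty)$ as subspaces of the same adelic space. First I would observe that since $L_{\text{\rm{min}}}\subset L$, every $L$-invariant function on $G(\bbQ)\setminus G(\bbA)$ is a fortiori $L_{\text{\rm{min}}}$-invariant, so we have a tautological inclusion
$$
L^2_{cusp}(G(\bbQ)\setminus G(\bbA))^{L}\hookrightarrow L^2_{cusp}(G(\bbQ)\setminus G(\bbA))^{L_{\text{\rm{min}}}},
$$
and this inclusion is manifestly $G_\infty$-equivariant and isometric for the fixed adelic inner product (no scalar distortion at the adelic level). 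Composing with the isomorphisms of Lemma \ref{lem-4} for $L$ and for $L_{\text{\rm{min}}}$ respectively, I obtain a $G_\infty$-equivariant embedding $L^2_{cusp}(\Gamma_L\setminus G_\infty)\hookrightarrow L^2_{cusp}(\Gamma_{L_{\text{\rm{min}}}}\setminus G_\infty)$; I should check that this composite is exactly the natural embedding coming from the surjection $\Gamma_{L_{\text{\rm{min}}}}\setminus G_\infty\twoheadrightarrow \Gamma_L\setminus G_\infty$ (equivalently, pullback of functions along $\Gamma_{L_{\text{\rm{min}}}}\backslash\Gamma_L$-cosets), which follows by tracing through the explicit restriction maps in Lemma \ref{lem-4} and using $G(\bbA_f)=G(\bbQ)L_{\text{\rm{min}}}=G(\bbQ)L$.

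Next I would pin down the scalar. Applying the norm identity of Lemma \ref{lem-4} twice, for $\psi\in L^2_{cusp}(\Gamma_L\setminus G_\infty)$ (viewed adelically) one gets
$$
\int_{G(\bbQ)\setminus G(\bbA)}|\psi|^2\,dg = vol_{G(\bbA_f)}(L)\int_{\Gamma_L\setminus G_\infty}|\psi|^2\,dg_\infty = vol_{G(\bbA_f)}(L_{\text{\rm{min}}})\int_{\Gamma_{L_{\text{\rm{min}}}}\setminus G_\infty}|\psi|^2\,dg_\infty,
$$
so the ratio of the two archimedean norms is $vol_{G(\bbA_f)}(L)/vol_{G(\bbA_f)}(L_{\text{\rm{min}}}) = [L:L_{\text{\rm{min}}}]$. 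It then remains to identify $[L:L_{\text{\rm{min}}}]$ with $[\Gamma_L:\Gamma_{\text{\rm{min}}}]$. This is the one genuinely non-formal point: I would use strong approximation, which gives $G(\bbQ)\cap L/\,G(\bbQ)\cap L_{\text{\rm{min}}}\xrightarrow{\sim} L/L_{\text{\rm{min}}}$ — indeed the map $\Gamma_L/\Gamma_{L_{\text{\rm{min}}}}\to L/L_{\text{\rm{min}}}$ induced by inclusion is injective (both sit inside $G(\bbA_f)$ with $\Gamma_L\cap L_{\text{\rm{min}}}=\Gamma_{L_{\text{\rm{min}}}}$) and surjective because density of $G(\bbQ)$ in $G(\bbA_f)$ forces $L=(G(\bbQ)\cap L)\,L_{\text{\rm{min}}}$. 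Hence $[\Gamma_L:\Gamma_{L_{\text{\rm{min}}}}]=[L:L_{\text{\rm{min}}}]$, which is the claimed scalar.

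The main obstacle I anticipate is not the volume bookkeeping but verifying that the abstractly constructed embedding coincides with the \emph{natural} geometric one (pullback along $\Gamma_{L_{\text{\rm{min}}}}\setminus G_\infty\to\Gamma_L\setminus G_\infty$), since Lemma \ref{lem-4}'s isomorphism is described only via restriction to $G_\infty\subset G(\bbA)$ through a choice of coset representatives; once one fixes compatible representatives for $G(\bbA_f)=G(\bbQ)L\subset G(\bbQ)L_{\text{\rm{min}}}$ this compatibility is automatic, but it needs to be said. Everything else is a two-line consequence of Lemma \ref{lem-4} and strong approximation, exactly as in the cited computation (\cite{Muic1}, proof of Theorem 7-2).
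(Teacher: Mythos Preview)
Your argument is correct. The paper itself does not give a proof here: it simply cites \cite{Muic2}, Lemma~1-9, which is the purely archimedean fact that if $\Gamma'\subset\Gamma$ are lattices with $[\Gamma:\Gamma']<\infty$, then pullback along the finite covering $\Gamma'\backslash G_\infty\to\Gamma\backslash G_\infty$ multiplies the $L^2$-norm by $[\Gamma:\Gamma']$ (a one-line fundamental-domain computation) and preserves cuspidality.

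Your route is different: you pass through the adelic space, apply Lemma~\ref{lem-4} once for $L$ and once for $L_{\text{\rm{min}}}$, and then invoke strong approximation to identify $[L:L_{\text{\rm{min}}}]$ with $[\Gamma_L:\Gamma_{L_{\text{\rm{min}}}}]$. In doing so you are essentially proving Lemma~\ref{lem-6} and the commutative diagram of Lemma~\ref{lem-7} first, and then reading Lemma~\ref{lem-5} off as a corollary --- the reverse of the paper's logical order. The direct archimedean argument is shorter and needs neither Lemma~\ref{lem-4} nor strong approximation; your approach, on the other hand, makes the compatibility in Lemma~\ref{lem-7} transparent from the outset. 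Your remark that the composite agrees with the \emph{natural} pullback is immediate once one notes that both vertical isomorphisms in Lemma~\ref{lem-4} are literally restriction of functions to $G_\infty\times\{1\}$, so no choice of coset representatives is actually needed there.
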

\begin{proof} This is \cite{Muic2}, Lemma 1-9. 
\end{proof}

\vskip .2in

\begin{Lem}\label{lem-6} Let $L\in \mathcal F$. Then, we have
$$
\left[
\Gamma_L:\Gamma_{L_{\text{\rm{min}}}}\right]=vol_{G(\bbA_f)}(L)/vol_{G(\bbA_f)}(L_{\text{\rm{min}}}).
$$
\end{Lem}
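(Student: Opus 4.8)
The plan is to relate both sides to the common ambient group $\Gamma_{L_{\text{\rm{min}}}}$ and use the volume–index dictionary provided by strong approximation. First I would note that since $L_{\text{\rm{min}}} \subset L$ and both are open compact subgroups of $G(\bbA_f)$, the index $[L : L_{\text{\rm{min}}}]$ is finite and equals $vol_{G(\bbA_f)}(L)/vol_{G(\bbA_f)}(L_{\text{\rm{min}}})$ for any Haar measure on $G(\bbA_f)$; this is immediate from the definition of Haar measure and additivity over cosets, so the right-hand side of the asserted identity is just $[L : L_{\text{\rm{min}}}]$. Thus the content of the lemma is the purely group-theoretic equality
$$
[\Gamma_L : \Gamma_{L_{\text{\rm{min}}}}] \ = \ [L : L_{\text{\rm{min}}}].
$$

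To prove this, I would exhibit a bijection between the coset spaces $\Gamma_{L_{\text{\rm{min}}}} \backslash \Gamma_L$ and $L_{\text{\rm{min}}} \backslash L$. The natural candidate map sends $\Gamma_{L_{\text{\rm{min}}}} \gamma \mapsto L_{\text{\rm{min}}} \gamma$ for $\gamma \in \Gamma_L = L \cap G(\bbQ)$, using the diagonal embedding $G(\bbQ) \hookrightarrow G(\bbA_f)$. This is well defined and injective essentially by definition: $L_{\text{\rm{min}}} \gamma = L_{\text{\rm{min}}} \gamma'$ with $\gamma, \gamma' \in G(\bbQ)$ forces $\gamma (\gamma')^{-1} \in L_{\text{\rm{min}}} \cap G(\bbQ) = \Gamma_{L_{\text{\rm{min}}}}$. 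The real point is surjectivity, and this is exactly where strong approximation enters: given $\ell \in L$, we need $\gamma \in G(\bbQ)$ with $L_{\text{\rm{min}}} \ell = L_{\text{\rm{min}}} \gamma$, i.e.\ $\gamma \in L_{\text{\rm{min}}} \ell \subset L$, so that automatically $\gamma \in L \cap G(\bbQ) = \Gamma_L$. Since $L_{\text{\rm{min}}} \ell$ is a nonempty open set in $G(\bbA_f)$ and $G(\bbQ)$ is dense in $G(\bbA_f)$ by the strong approximation property (recalled in the Introduction, using that $G$ is simply connected, absolutely almost simple, with $G_\infty$ non-compact), the intersection $G(\bbQ) \cap L_{\text{\rm{min}}} \ell$ is nonempty, giving the desired $\gamma$. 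Hence the map is a bijection of finite sets and $[\Gamma_L : \Gamma_{L_{\text{\rm{min}}}}] = [L : L_{\text{\rm{min}}}]$.

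The main obstacle — such as it is — is keeping the identifications straight: $\Gamma_L$ is defined via the intersection inside $G(\bbA_f)$ (equation \eqref{e-20}), while elsewhere $\Gamma_L$ denotes its image in $G_\infty$, and one must make sure the projection $G(\bbA) = G_\infty \times G(\bbA_f) \to G_\infty$ is injective on $G(\bbQ)$ (which holds since $G(\bbQ)$ is discrete in $G(\bbA)$, so intersecting with the open set $\{1\} \times G(\bbA_f)$ composed with... more simply, $G(\bbQ) \cap G(\bbA_f) = \{1\}$ inside $G(\bbA)$ because the archimedean component of a rational point is determined by it and a nontrivial rational point has nontrivial archimedean component). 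Consequently the index of the congruence subgroups in $G_\infty$ agrees with the index of the corresponding arithmetic groups sitting inside $G(\bbA_f)$, and the computation above applies verbatim. Everything else is routine: finiteness of all indices follows from openness and compactness, and the measure computation is the standard fact that for nested open compact subgroups the index is the ratio of Haar volumes.
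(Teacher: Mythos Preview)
Your proof is correct and follows essentially the same route as the paper: both reduce the identity to $[\Gamma_L:\Gamma_{L_{\text{min}}}]=[L:L_{\text{min}}]$ via the Haar-volume/index relation, and then deduce this equality from strong approximation (the paper phrases it as $G(\bbA_f)=G(\bbQ)L_{\text{min}}$, you phrase it as density of $G(\bbQ)$ in $G(\bbA_f)$ to hit each open coset $L_{\text{min}}\ell$). Your write-up is simply a more explicit unpacking of the paper's two-line argument.
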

\begin{proof} Obviously, we have 
$$
\left[L: L_{\text{\rm{min}}}\right]=vol_{G(\bbA_f)}(L)/vol_{G(\bbA_f)}(L_{\text{\rm{min}}}).
$$
But 
$$L/ L_{\text{\rm{min}}}= \left(L\cap G(\Bbb Q)\right)/\left(L_{\text{\rm{min}}}\cap G(\Bbb Q)\right)=
\Gamma_L/\Gamma_{L_{\text{\rm{min}}}},
$$
since
$$
G(\Bbb A_f)=G(\Bbb Q) L_{\text{\rm{min}}}.
$$
\end{proof}

\vskip .2in

\begin{Lem}\label{lem-7} 
We have the following commutative diagram  of unitary $G_\infty$--representations:
$$
\begin{CD}
L^2_{cusp}(G(\bbQ)\setminus G(\bbA))^L@>>>L^2_{cusp}(G(\bbQ)\setminus G(\bbA))^{L_{\text{\rm{min}}}}\\
@V\simeq VV@V\simeq VV\\
L^2_{cusp}(\Gamma_{L}\setminus G_\infty)
@>>>L^2_{cusp}(\Gamma_{L_{\text{\rm{min}}}}\setminus G_\infty),\\
\end{CD}
$$
where the horizontal maps are inclusions, the vertical maps are isomorphisms, and the 
inner products are normalized as follows: (i) on the spaces in the first row, we take the usual Petersson inner product $\int_{G(\bbQ)\setminus G(\bbA)} \psi(g)\overline{\varphi(g)} dg$, and (ii) on $L^2_{cusp}(\Gamma_{U}\setminus G_\infty)$, the inner product is the normalized integral  
$vol_{G(\bbA_f)}(U)\int_{\Gamma_{U}\setminus G_\infty} \psi(g_\infty)\overline{\varphi(g_\infty)} dg$,
where $U\in \{L_{\text{\rm{min}}}, L\}$.
\end{Lem}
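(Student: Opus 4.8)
The plan is to assemble the square from the maps already constructed in Lemmas \ref{lem-4}, \ref{lem-5} and \ref{lem-6}, and then check that their normalization constants are mutually consistent. First I would note that, since $L_{\text{\rm{min}}}\subseteq L$, every function on $G(\bbQ)\backslash G(\bbA)$ that is right $L$-invariant is a fortiori right $L_{\text{\rm{min}}}$-invariant; thus the top horizontal arrow is the tautological inclusion $L^2_{cusp}(G(\bbQ)\backslash G(\bbA))^{L}\hookrightarrow L^2_{cusp}(G(\bbQ)\backslash G(\bbA))^{L_{\text{\rm{min}}}}$, and it is $G_\infty$-equivariant and isometric for the Petersson inner product, which does not involve the level. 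The two vertical arrows are the restriction-to-$G_\infty$ maps $\psi\mapsto\big(g_\infty\mapsto\psi(g_\infty,1)\big)$ of Lemma \ref{lem-4}; by strong approximation $G(\bbA_{f})=G(\bbQ)L$ (and likewise for $L_{\text{\rm{min}}}$) these are bijective, $G_\infty$-equivariant for right translation, and multiply the unnormalized archimedean norm squared by $vol_{G(\bbA_{f})}(L)$, resp. $vol_{G(\bbA_{f})}(L_{\text{\rm{min}}})$. The bottom horizontal arrow is the natural inclusion of Lemma \ref{lem-5}, which is $G_\infty$-equivariant and multiplies the unnormalized norm squared by $[\Gamma_L:\Gamma_{L_{\text{\rm{min}}}}]$.

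Commutativity at the level of functions is then immediate: starting from $\psi\in L^2_{cusp}(G(\bbQ)\backslash G(\bbA))^{L}$, both compositions produce the single function $g_\infty\mapsto\psi(g_\infty,1)$ — once regarded as an element of $L^2_{cusp}(\Gamma_{L}\backslash G_\infty)$ and then pushed into $L^2_{cusp}(\Gamma_{L_{\text{\rm{min}}}}\backslash G_\infty)$, versus directly as an element of $L^2_{cusp}(\Gamma_{L_{\text{\rm{min}}}}\backslash G_\infty)$. These agree because $\Gamma_{L_{\text{\rm{min}}}}\subseteq\Gamma_{L}$, so the inclusion of Lemma \ref{lem-5} is literally "regard a $\Gamma_L$-invariant function as a $\Gamma_{L_{\text{\rm{min}}}}$-invariant one". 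Since all four maps are $G_\infty$-equivariant, the square commutes as a diagram of $G_\infty$-representations.

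It remains to check that, with the inner products normalized as in the statement — i.e. scaling the archimedean pairing on $L^2_{cusp}(\Gamma_{U}\backslash G_\infty)$ by $vol_{G(\bbA_{f})}(U)$ for $U\in\{L_{\text{\rm{min}}},L\}$ — all four maps become \emph{unitary}, so that the commutative square is one of unitary $G_\infty$-representations. For the vertical maps this is exactly Lemma \ref{lem-4}: the factor $vol_{G(\bbA_{f})}(U)$ built into the target norm cancels the one occurring there. For the top map it is clear. For the bottom map, Lemma \ref{lem-5} multiplies the unnormalized norm squared by $[\Gamma_L:\Gamma_{L_{\text{\rm{min}}}}]$, hence the normalized one by
$$
\frac{vol_{G(\bbA_{f})}(L_{\text{\rm{min}}})}{vol_{G(\bbA_{f})}(L)}\,[\Gamma_L:\Gamma_{L_{\text{\rm{min}}}}],
$$
and this equals $1$ precisely by Lemma \ref{lem-6}. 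The only delicate point is therefore this bookkeeping of the three scalars $vol_{G(\bbA_{f})}(L)$, $vol_{G(\bbA_{f})}(L_{\text{\rm{min}}})$ and $[\Gamma_L:\Gamma_{L_{\text{\rm{min}}}}]$, which Lemma \ref{lem-6} reconciles; once that is in hand the lemma follows formally.
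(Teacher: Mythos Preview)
Your proposal is correct and follows exactly the route the paper takes: the paper's own proof is the single sentence ``The lemma follows immediately from Lemmas \ref{lem-4}, \ref{lem-5}, and \ref{lem-6},'' and what you have written is precisely the unpacking of that sentence, including the scalar bookkeeping via Lemma \ref{lem-6} that makes the diagram one of unitary representations.
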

\begin{proof} The lemma follows immediately from Lemmas \ref{lem-4}, \ref{lem-5}, and \ref{lem-6}.
\end{proof}

\vskip .2in

\begin{Lem}\label{lem-8} Let $L\in \mathcal F$, $L\neq L_{\text{\rm{min}}}$. Then,  
$P(f)|_{G_\infty}$ is orthogonal to 
$L^2_{cusp}(\Gamma_{L}\setminus G_\infty)$ in  $L^2_{cusp}(\Gamma_{L_{\text{\rm{min}}}}\setminus G_\infty)$
if and only if $P(f)$ is orthogonal to $L^2_{cusp}(G(\bbQ)\setminus G(\bbA))^L$.
\end{Lem}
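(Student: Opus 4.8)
The plan is to transport the orthogonality statement across the commutative diagram of Lemma~\ref{lem-7}. The key observation is that $P(f)$ lies in $L^2_{cusp}(G(\bbQ)\setminus G(\bbA))^{L_{\text{\rm{min}}}}$ by Lemma~\ref{lem-3}, and its restriction $P(f)|_{G_\infty}$ is precisely the image of $P(f)$ under the right-hand vertical isomorphism of Lemma~\ref{lem-7}. Similarly, $L^2_{cusp}(\Gamma_L\setminus G_\infty)$, viewed as a subspace of $L^2_{cusp}(\Gamma_{L_{\text{\rm{min}}}}\setminus G_\infty)$ via Lemma~\ref{lem-5}, is exactly the image of $L^2_{cusp}(G(\bbQ)\setminus G(\bbA))^L$ under that same isomorphism (this is what the square in Lemma~\ref{lem-7} records: the bottom horizontal inclusion matches the top horizontal inclusion under the vertical isomorphisms). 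So the two subspaces $P(f)|_{G_\infty}$ and $L^2_{cusp}(\Gamma_L\setminus G_\infty)$ inside $L^2_{cusp}(\Gamma_{L_{\text{\rm{min}}}}\setminus G_\infty)$ pull back to $P(f)$ and $L^2_{cusp}(G(\bbQ)\setminus G(\bbA))^L$ inside $L^2_{cusp}(G(\bbQ)\setminus G(\bbA))^{L_{\text{\rm{min}}}}$.

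First I would record that Lemma~\ref{lem-4} identifies $L^2_{cusp}(G(\bbQ)\setminus G(\bbA))^L$ with $L^2_{cusp}(\Gamma_L\setminus G_\infty)$ and $L^2_{cusp}(G(\bbQ)\setminus G(\bbA))^{L_{\text{\rm{min}}}}$ with $L^2_{cusp}(\Gamma_{L_{\text{\rm{min}}}}\setminus G_\infty)$ as $G_\infty$-representations. Next I would invoke Lemma~\ref{lem-7} to see that these identifications, once the inner products on the archimedean side are normalized by the factors $vol_{G(\bbA_f)}(L)$ and $vol_{G(\bbA_f)}(L_{\text{\rm{min}}})$ respectively, are actually isometries, and that the diagram commutes. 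Since an isometry of Hilbert spaces preserves orthogonality — $\psi \perp W$ if and only if $\Phi(\psi)\perp\Phi(W)$ for any isometric embedding $\Phi$ and closed subspace $W$ — applying the right vertical isometry of Lemma~\ref{lem-7} to the vector $P(f)$ and the closed subspace $L^2_{cusp}(G(\bbQ)\setminus G(\bbA))^L \subseteq L^2_{cusp}(G(\bbQ)\setminus G(\bbA))^{L_{\text{\rm{min}}}}$ yields the equivalence. Concretely: $P(f)|_{G_\infty} \perp L^2_{cusp}(\Gamma_L\setminus G_\infty)$ in $L^2_{cusp}(\Gamma_{L_{\text{\rm{min}}}}\setminus G_\infty)$ $\iff$ $P(f) \perp L^2_{cusp}(G(\bbQ)\setminus G(\bbA))^L$ in $L^2_{cusp}(G(\bbQ)\setminus G(\bbA))^{L_{\text{\rm{min}}}}$.

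The one point that needs a word of care — and I expect it to be the main (mild) obstacle — is reconciling the inner products: orthogonality on the archimedean side is a priori taken with respect to the \emph{unnormalized} integral $\int_{\Gamma_{L_{\text{\rm{min}}}}\setminus G_\infty}$, whereas the isometry of Lemma~\ref{lem-7} uses the normalized version $vol_{G(\bbA_f)}(L_{\text{\rm{min}}})\int_{\Gamma_{L_{\text{\rm{min}}}}\setminus G_\infty}$. But the normalizing constant is a single positive scalar independent of the functions involved, so it has no effect on whether an inner product vanishes; hence orthogonality with respect to either normalization is the same condition, and the ambient normalization chosen in Lemma~\ref{lem-7} is harmless here. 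With that remark in place the proof is just the transport-of-structure argument above.

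\begin{proof}
By Lemma~\ref{lem-3}, $P(f) \in L^2_{cusp}(G(\bbQ)\setminus G(\bbA))^{L_{\text{\rm{min}}}}$, and under the right vertical isomorphism of the diagram in Lemma~\ref{lem-7} it corresponds to its restriction $P(f)|_{G_\infty} \in L^2_{cusp}(\Gamma_{L_{\text{\rm{min}}}}\setminus G_\infty)$. By the commutativity of that diagram, the image of the closed subspace $L^2_{cusp}(G(\bbQ)\setminus G(\bbA))^L$ under the right vertical isomorphism is exactly the image of $L^2_{cusp}(\Gamma_L\setminus G_\infty)$ under the bottom horizontal embedding of Lemma~\ref{lem-5}; that is, it is the copy of $L^2_{cusp}(\Gamma_L\setminus G_\infty)$ sitting inside $L^2_{cusp}(\Gamma_{L_{\text{\rm{min}}}}\setminus G_\infty)$.

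By Lemma~\ref{lem-7}, the right vertical map is an isometry once the inner product on $L^2_{cusp}(\Gamma_{L_{\text{\rm{min}}}}\setminus G_\infty)$ is taken to be $vol_{G(\bbA_f)}(L_{\text{\rm{min}}})\int_{\Gamma_{L_{\text{\rm{min}}}}\setminus G_\infty}\psi\,\overline{\varphi}\,dg_\infty$. Since $vol_{G(\bbA_f)}(L_{\text{\rm{min}}})$ is a fixed positive constant, vanishing of this inner product is equivalent to vanishing of the unnormalized integral $\int_{\Gamma_{L_{\text{\rm{min}}}}\setminus G_\infty}\psi\,\overline{\varphi}\,dg_\infty$; hence orthogonality in $L^2_{cusp}(\Gamma_{L_{\text{\rm{min}}}}\setminus G_\infty)$ is the same relation for either normalization. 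Likewise the inner product on $L^2_{cusp}(G(\bbQ)\setminus G(\bbA))^{L_{\text{\rm{min}}}}$ in Lemma~\ref{lem-7} is the Petersson inner product.

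An isometric isomorphism of Hilbert spaces carries the orthogonal complement of a closed subspace onto the orthogonal complement of its image. Applying the right vertical isometry of Lemma~\ref{lem-7} to the vector $P(f)$ and the closed subspace $L^2_{cusp}(G(\bbQ)\setminus G(\bbA))^L$ of $L^2_{cusp}(G(\bbQ)\setminus G(\bbA))^{L_{\text{\rm{min}}}}$, we conclude that $P(f)$ is orthogonal to $L^2_{cusp}(G(\bbQ)\setminus G(\bbA))^L$ if and only if $P(f)|_{G_\infty}$ is orthogonal to $L^2_{cusp}(\Gamma_L\setminus G_\infty)$ inside $L^2_{cusp}(\Gamma_{L_{\text{\rm{min}}}}\setminus G_\infty)$. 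This is the assertion of the lemma.
\end{proof}
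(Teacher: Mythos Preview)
Your proof is correct and follows the same approach as the paper, which simply says ``This follows from Lemma~\ref{lem-7}.'' You have unpacked that one-line justification in full detail---using the commutative diagram to identify $P(f)$ with $P(f)|_{G_\infty}$ and $L^2_{cusp}(G(\bbQ)\setminus G(\bbA))^L$ with the embedded copy of $L^2_{cusp}(\Gamma_L\setminus G_\infty)$, and noting that the positive scalar normalization does not affect orthogonality---so your argument is essentially an elaboration of the paper's.
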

\begin{proof} This follows from Lemma  \ref{lem-7}.
\end{proof} 

\vskip .2in 
By Lemma \ref{lem-3} (iv), the closure $\cal U$ of the $G_\infty$-invariant subspace in 
$L^2_{cusp}(\Gamma_{L_{\text{\rm{min}}}}\setminus G_\infty)$ generated by $P(f)|_{G_\infty}$ is an orthogonal direct sum of 
infinitely many inequivalent irreducible unitary representations of $G_\infty$. By Lemma \ref{lem-8}, 
$\cal U$ is orthogonal to $$
\sum_{\substack{L\in \mathcal F\\
L\neq L_{\text{\rm{min}}}}}\ \  L^2_{cusp}(\Gamma_{L}\setminus G_\infty)
$$
if and only if $P(f)$ is orthogonal to $L^2_{cusp}(G(\bbQ)\setminus G(\bbA))^L$
for all $L\in \mathcal F$, $L\neq L_{\text{\rm{min}}}$. The following lemma
completes the proof of Theorem \ref{intr-thm}.

\vskip .2in 
\begin{Lem}\label{lem-9} Let $L\in \mathcal F$, $L\neq L_{\text{\rm{min}}}$. Then,  
$P(f)$ is orthogonal to $L^2_{cusp}(G(\bbQ)\setminus G(\bbA))^L$.
\end{Lem}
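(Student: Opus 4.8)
The plan is to read off the orthogonality from the automorphic spectral decomposition \eqref{e-50} together with the local rigidity at the primes of $T$ supplied by Lemma \ref{lem-3}(ii). First I would record the tensor-product structure. By Lemma \ref{lem-3} we have $P(f)=\sum_j \psi_j$ with $\psi_j\in\mathfrak{H}_j$, and each irreducible constituent $\mathfrak{H}_j$ in \eqref{e-50} is isometrically isomorphic, as a unitary $G(\bbA)$-representation, to a restricted tensor product $\widehat{\bigotimes}_v \pi_v^j$, the restriction being taken with respect to $G(\bbZ_p)$-fixed vectors at almost all places $p$. Since $L=\prod_p L_p$ is factorizable (Assumption \eqref{assumptions}(ii)), the averaging projector $e_L$ is the (commuting) product of the local projectors $e_{L_p}$, and $e_{L_p}$ acts on $\mathfrak{H}_j$ as the identity on the tensor factors away from $p$ and as the projection of $\pi_p^j$ onto $(\pi_p^j)^{L_p}$ at $p$. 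Consequently $\mathfrak{H}_j^L\cong \pi_\infty^j\,\widehat\otimes\,\widehat{\bigotimes}_{p}(\pi_p^j)^{L_p}$, and in particular $\mathfrak{H}_j^L=0$ as soon as $(\pi_p^j)^{L_p}=0$ for a single prime $p$.

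Next I would invoke the hypotheses. Since $L\neq L_{\text{\rm{min}}}$, Assumption \eqref{assumptions}(iii) furnishes a prime $p_0\in T$ with $\pi_{p_0}^{L_{p_0}}=0$. If $\psi_j\neq 0$, then Lemma \ref{lem-3}(ii) gives $\pi_p^j\simeq\pi_p$ for every $p\in T$, so $(\pi_{p_0}^j)^{L_{p_0}}\simeq \pi_{p_0}^{L_{p_0}}=0$, and therefore $\mathfrak{H}_j^L=0$ by the previous step.

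Finally I would assemble the conclusion. Since passing to $L$-invariants commutes with the orthogonal Hilbert-space direct sum \eqref{e-50}, we have $L^2_{cusp}(G(\bbQ)\setminus G(\bbA))^L=\widehat{\bigoplus}_j \mathfrak{H}_j^L$, and the orthogonal projection of $P(f)$ onto $\mathfrak{H}_j^L$ coincides with the orthogonal projection of its $\mathfrak{H}_j$-component $\psi_j$ onto $\mathfrak{H}_j^L$. If $\psi_j=0$ this projection is zero; if $\psi_j\neq 0$ then $\mathfrak{H}_j^L=0$ and again the projection is zero. Hence the projection of $P(f)$ onto $L^2_{cusp}(G(\bbQ)\setminus G(\bbA))^L$ vanishes, i.e. $P(f)$ is orthogonal to that subspace, which is the assertion.

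The only point demanding care — and the one I would spell out — is the compatibility of the operation ``pass to $L_{p_0}$-fixed vectors'' with the restricted-tensor-product model of $\mathfrak{H}_j$, i.e. that $(\pi_{p_0}^j)^{L_{p_0}}=0$ forces $e_{L_{p_0}}$, hence $e_L$, to annihilate all of $\mathfrak{H}_j$. Once this and Lemma \ref{lem-3}(ii) are granted the argument is purely formal; there is no genuine analytic difficulty here, since the hard work — producing a nonzero cuspidal $P(f)$ with the prescribed supercuspidal local components — was already carried out in Lemmas \ref{lem-2} and \ref{lem-3}.
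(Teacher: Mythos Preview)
Your argument is correct, but it takes a different route from the paper. The paper's proof never touches the spectral decomposition \eqref{e-50} or Lemma \ref{lem-3}(ii); instead it works directly with the test function $f$. Since $f_p$ for $p\in T$ is a matrix coefficient of $\pi_p$ and Assumption \eqref{assumptions}(iii) gives $\pi_{p_0}^{L_{p_0}}=0$ for some $p_0\in T$, one has $\int_{L_{p_0}} f_{p_0}(g l)\,dl=0$ for all $g$, hence $\int_L f(gl)\,dl=0$, and therefore $\int_L P(f)(gl)\,dl=0$. A two-line Fubini computation with any $\varphi\in L^2_{cusp}(G(\bbQ)\setminus G(\bbA))^L$ then gives the orthogonality. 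This is more elementary: it bypasses Flath's restricted tensor product theorem and the compatibility of $e_{L_p}$ with the tensor-product model that you flag as needing care. Your approach, by contrast, is more structural and explains the orthogonality at the level of automorphic representations; it also shows in passing that every $\mathfrak{H}_j$ supporting $P(f)$ has no $L$-fixed vectors whatsoever, which is a slightly stronger statement than mere orthogonality of $P(f)$.
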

\begin{proof} Here we use the very last assumption that $\pi^{L_p}=0$ for all 
$L\in {\mathcal F}$ such that $L\neq L_{\text{\rm{min}}}$.

We remind the reader that $f_p$ is a matrix 
coefficient of $\pi_p$ for $p\in T$. 
 Since $ L_{\text{\rm{min}}}\subset L$, we have 
$ L_{\text{\rm{min}}, p }\subset L_p$ for all primes $p$  (the groups are factorizable), 
we obtain that 
$$
\int_{L} f(g l) dl =0, \ \ \text{for all $g\in G(\bbA)$.}
$$
Hence 
$$
\int_{L} P(f)(g l) dl =0, \ \ \text{for all $g\in G(\bbA)$.}
$$

Finally, let $\varphi \in L_{cusp}^2(G(\bbQ)\setminus G(\bbA))^{L}$. Then
we compute
\begin{align*}
& vol_{G(\bbA_f)}(L)\cdot \int_{G(\bbQ)\setminus G(\bbA)} P(f)(g)\overline{\varphi(g)}dg=\\
& \int_{G(\bbQ)\setminus G(\bbA)} \int_{L} P(f)(gl)\overline{\varphi(gl)}dldg =\\
&  \int_{G(\bbQ)\setminus G(\bbA)} \left(\int_{L} P(f)(gl)dl\right) \overline{\varphi(g)}dg=0.
\end{align*}
This proves the lemma. 
\end{proof}

\bigskip

\bigskip

\section{Open compact subgroups of $G({\bbQ}_p)$, congruence subgroups, and cusp forms}\label{sec-2}

\medskip

\subsection{The case of SL$_M$}\label{case-slm} \ 

\smallskip

Fix a positive integer $M$, and consider the algebraic reductive group $G=\SL{M}$.   In $\SL{M}({\mathbb Z})$, we consider an alternative to the usual principal congruence subgroup 
\begin{equation}\label{principal-congruence}
\Gamma (n) \ := \ \{ \, g = (g_{i,j}) \in \SL{M}({\mathbb Z}) \ | \ g_{i,j} \equiv \delta_{i,j} \mymod n \, \} \ .
\end{equation}

\noindent For a prime power $p^k$ ($k > 0$), we set 

\begin{equation}\label{iwahori-congruence}
\aligned
\Gamma_{1} (p^k) \ := \ \{ \, g = (g_{i,j}) \in \SL{M}({\mathbb Z}) \ | \ p^{(k-1)} \, &| \, g_{i,j} {\text{\rm{ for $i<j$}}} \\
p^k \, &| \, (g_{i,j} - \delta_{i,j}) {\text{\rm{ for $i \ge j$}}} \ \} .
\endaligned
\end{equation}

\noindent $\Gamma_{1} (p^k)$ is the set of elements in $\Gamma (p^{(k-1)})$ which modulo $p^k$ are unipotent upper triangular.    If $n$ is a positive integer, with prime factorization $n={p_{1}}^{e_{1}} \cdots  {p_{s}}^{e_{s}}$, set 

\begin{equation}
\Gamma_{1} (n) \ := \ {\underset {i} \bigcap } \ \Gamma_{1} ({p_{i}}^{e_{i}}) \ .
\end{equation}
\bigskip

\noindent Recall that $\Gamma (n)$ is a subgroup of $\Gamma (m)$ if and only if $m$ divides $n$.  Moreover, when $\Gamma (n)$ is a subgroup of $\Gamma (m)$, it is a normal subgroup.  The situation for the groups $\Gamma_{1}(m)$ is slightly more complicate.  We note that if $m$ divides $n$, then $\Gamma_{1}(n)$ is a subgroup of $\Gamma_{1}(m)$.  However, for $n > 1$, we also note  $\Gamma_{1} (n)$ is not a normal subgroup of $\Gamma (1) = \SL{M}({\mathbb Z})$, and that $\Gamma_{1}(n)$ is not necessarily a normal subgroup of $\Gamma_{1}(m)$ when $m | n$.   Define $n$ to be a strong multiple of $m$ (or $m$ divides $n$ strongly) if $n$ is a multiple of $m$ and every prime $p$ that occurs in the factorization of $n$ also occurs in the factorization of $m$.  We use the notation $m \mystrongdivide n$.  The following elementary result then provides a criterion for when $\Gamma_1(n)$ is a normal subgroup of $\Gamma_{1}(m)$.

\medskip

\begin{Prop}\label{gamma-one-1}  
\begin{itemize}
\item[(i)] For $k \ge 1$, the group $\Gamma_{1}(p^{k})$ is a normal subgroup of $\Gamma_{1}(p)$.
\smallskip
\item[(ii)] If $m \mystrongdivide n$, then $\Gamma_{1}(n)$ is a normal subgroup of $\Gamma_{1}(m)$.
\end{itemize}
\end{Prop}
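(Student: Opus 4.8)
The plan is to prove (i) first by a direct matrix computation, and then deduce (ii) from (i) together with the coprimality built into the definition of "strong multiple." For part (i), the key observation is that $\Gamma_1(p^k)$ and $\Gamma_1(p)$ both live inside $\SL{M}(\mathbb{Z}_p)$ (the other local factors being $\SL{M}(\mathbb{Z}_\ell)$ for $\ell \neq p$, where the congruence conditions are vacuous), so normality is a statement purely about $p$-adic matrices. I would describe $\Gamma_1(p^k)$ as the preimage, under reduction $\SL{M}(\mathbb{Z}_p) \to \SL{M}(\mathbb{Z}/p^k)$, of the group $U_k$ of upper-triangular unipotent matrices mod $p^k$ — or more precisely as the set of $g \in \SL{M}(\mathbb{Z}_p)$ with $g \equiv u \pmod{p^{k-1}\cdot(\text{off-diagonal})}$ suitably; the cleanest formulation is $\Gamma_1(p^k) = \ker\big(\SL{M}(\mathbb{Z}_p) \to \SL{M}(\mathbb{Z}/p^k)\big)\cdot (\text{unipotent upper triangular lifts})$, which I will phrase as: $g\in\Gamma_1(p^k)$ iff $g \bmod p^{k-1}$ is upper unipotent and $g$ is $\equiv 1 \pmod{p^k}$ on and below the diagonal. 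Then for $\gamma \in \Gamma_1(p)$ and $g \in \Gamma_1(p^k)$, I compute $\gamma g \gamma^{-1}$ and check it satisfies the same congruences. This reduces to a statement mod $p^k$ about conjugating an upper unipotent matrix (reduced mod $p^{k-1}$, so with small off-diagonal entries) by an element of $\Gamma_1(p)$; writing $\gamma = 1 + p\,X$ (in the relevant entries) and $g = 1 + N$ with $N$ having the stated $p$-divisibility, one expands $\gamma g \gamma^{-1} = g + [pX, g] + \cdots$ and tracks valuations entry-by-entry, using that the commutator of a "$p$-small" matrix with a "unipotent-type" matrix gains a factor of $p$ exactly where it is needed. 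The honest statement I want is that the defining congruence conditions of $\Gamma_1(p^k)$ cut out a subgroup normalized by $\Gamma_1(p)$, and this is best seen by identifying $\Gamma_1(p)/\Gamma_1(p^k)$ with a subgroup of $\SL{M}(\mathbb{Z}/p^k)$ and checking it is the preimage of something normal in $\SL{M}(\mathbb{Z}/p)$ — namely that $\Gamma_1(p^k)$ is the preimage of the trivial-ish subgroup under the map to the mod-$p$ Borel-unipotent quotient, combined with a congruence-subgroup kernel.

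For part (ii), suppose $m \mystrongdivide n$, with $m = \prod p_i^{e_i}$ and $n = \prod p_i^{f_i}$, $f_i \ge e_i \ge 1$ (same primes by the strong-divisibility hypothesis — this is the crucial point that fails for ordinary divisibility). By definition $\Gamma_1(m) = \bigcap_i \Gamma_1(p_i^{e_i})$ and $\Gamma_1(n) = \bigcap_i \Gamma_1(p_i^{f_i})$, and all of these sit inside $\SL{M}(\mathbb{Z})$, which embeds diagonally into $\prod_i \SL{M}(\mathbb{Z}_{p_i}) \times (\text{rest})$. Fix $\gamma \in \Gamma_1(m)$ and $g \in \Gamma_1(n)$; I must show $\gamma g \gamma^{-1} \in \Gamma_1(p_i^{f_i})$ for every $i$. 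The congruence condition defining $\Gamma_1(p_i^{f_i})$ only involves the image in $\SL{M}(\mathbb{Z}_{p_i})$. At the prime $p_i$: $\gamma \in \Gamma_1(m) \subseteq \Gamma_1(p_i^{e_i}) \subseteq \Gamma_1(p_i)$ and $g \in \Gamma_1(n) \subseteq \Gamma_1(p_i^{f_i})$, so by part (i) (applied with $k = f_i$) we get $\gamma g \gamma^{-1} \in \Gamma_1(p_i^{f_i})$. Since this holds for each $i$ and $\Gamma_1(n)$ is the intersection, $\gamma g \gamma^{-1} \in \Gamma_1(n)$. Normality follows, and the fact that $\Gamma_1(n) \subseteq \Gamma_1(m)$ was already noted in the text.

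The main obstacle is the entry-by-entry valuation bookkeeping in part (i): one must verify that conjugation by an element $\gamma \in \Gamma_1(p)$ — whose off-diagonal entries above the diagonal are only divisible by $p^0 = 1$, not by a higher power — does not destroy the condition "$p^{k-1} \mid g_{ij}$ for $i < j$." The point is that an off-diagonal entry of $\gamma g \gamma^{-1}$ above the diagonal is, mod $p^k$, a sum of terms each of which is either an entry of $g$ above the diagonal (divisible by $p^{k-1}$) or a product involving at least one strictly-upper entry of $g$ or $\gamma^{-1}$ times a factor from $\gamma$ lying strictly above the diagonal; the latter products, and the "below/on diagonal" contributions, are controlled by the $p^k$-divisibility of $g - 1$ there together with the ordering of indices, so everything is divisible by $p^{k-1}$ as required, and the on-or-below-diagonal entries of $\gamma g\gamma^{-1}-1$ come out divisible by $p^k$. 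I would present this as a short lemma isolating the two congruence conditions and showing each is preserved, rather than a brute-force expansion of the full product; the cleanest route is genuinely the quotient-group reformulation, identifying $\Gamma_1(p^k)$ inside $\SL{M}(\mathbb{Z}/p^k)$ as an explicitly normal subgroup of the image of $\Gamma_1(p)$, which sidesteps most of the arithmetic.
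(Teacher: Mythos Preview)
The paper gives no proof of this proposition; it is introduced as an ``elementary result'' and stated without argument. So there is nothing to compare against, and I will just comment on your proposal.

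Your reduction of (ii) to (i) is correct and clean: the strong-divisibility hypothesis guarantees that the same primes occur in $m$ and $n$, so for each such $p_i$ you have $\gamma \in \Gamma_1(m) \subset \Gamma_1(p_i)$ and $g \in \Gamma_1(n) \subset \Gamma_1(p_i^{f_i})$, and (i) gives $\gamma g \gamma^{-1} \in \Gamma_1(p_i^{f_i})$; intersecting over $i$ finishes it.

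Your treatment of (i), however, drifts among several formulations without committing to one, and some of the intermediate claims are not right as stated. The expansion ``$\gamma = 1 + pX$'' is not valid globally, since the strictly upper entries of $\gamma \in \Gamma_1(p)$ need not be divisible by $p$; and the verbal description of which terms in $(\gamma g \gamma^{-1})_{ij}$ carry which $p$-power is hard to parse. The quotient-group route you call ``cleanest'' really is, and it can be pinned down in two lines. For $k \ge 2$ one has $\Gamma(p^k) \subset \Gamma_1(p^k) \subset \Gamma(p^{k-1})$, and since $\Gamma(p^k)$ is normal in $\SL{M}(\mathbb{Z})$ it suffices to check that the image of $\Gamma_1(p^k)$ in $\Gamma(p^{k-1})/\Gamma(p^k)$ is stable under $\Gamma_1(p)$-conjugation. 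The map $I + p^{k-1}Y \mapsto (Y \bmod p)$ identifies $\Gamma(p^{k-1})/\Gamma(p^k)$ with $\mathfrak{sl}_M(\mathbb{F}_p)$, sends $\Gamma_1(p^k)/\Gamma(p^k)$ to the strictly upper-triangular matrices $\mathfrak{n}$, and intertwines conjugation by $\gamma$ with the adjoint action of $(\gamma \bmod p)$. The image of $\Gamma_1(p)$ in $\SL{M}(\mathbb{F}_p)$ is the upper unipotent group $U$, and $U$ normalizes $\mathfrak{n}$ (indeed $\mathfrak{n}$ is an ideal of the upper-triangular subalgebra). That is the whole argument; the case $k=1$ is trivial. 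If you prefer the direct computation, the same observation is what makes it work: writing $g = I + p^{k-1}Y$ with $Y$ strictly upper triangular mod $p$, one has $\gamma g \gamma^{-1} = I + p^{k-1}\gamma Y \gamma^{-1}$, and $\gamma Y \gamma^{-1}$ is again strictly upper triangular mod $p$ because $\gamma \bmod p$ is upper unipotent.
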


\smallskip

For $k$ a positive integer, define open compact subgroups of $\SL{M}({\mathbb Q}_p)$ as follows:

\begin{equation}\label{iwahori-k}
\aligned
{\mathcal K}_{p,k} \ :&= \ \{ \, (g_{i,j}) \in \SL{M}({\mathbb Z}_{p}) \ \big| \ p^{k} \, | \, (g_{i,j} - \delta_{i,j}) \ \forall \, i,j \ \} \ , \ {\text{\rm{ and }}} \\
&\ \\
{\mathcal I}_{p,k^{+}} \ :&= \ \{ \, (g_{i,j}) \in \SL{M}({\mathbb Z}_{p}) \ \big| \ p^{(k-1)} \, | \, g_{i,j} {\text{\rm{ for $i<j$,}}} \ {\text{\rm{and}}}  \ p^k \, | \, (g_{i,j} - \delta_{i,j}) {\text{\rm{ for $i \ge j$}}} \ \} .
\endaligned
\end{equation}
\noindent The group ${\mathcal K}_{p,k}$ is the well-known k-th congruence subgroup of $\SL{M}({\mathbb Z}_{p})$.  Set 

\begin{equation}\label{open-compact-SL}
\aligned
K_n \, :&= \ {\underset {p_i | n} \prod } \  {\mathcal K}_{p_i, e_i} \ \ {\underset {p {\nmid} n} \prod } \ G({\mathbb Z}_p) \ , \ \ {\text{\rm{and}}} \ \ I_n \, := \ {\underset {p_i | n} \prod } \  {\mathcal I}_{p_i, (e_i)^{+}} \ \ {\underset {p {\nmid} n} \prod } \ G({\mathbb Z}_p)  \ .
\endaligned
\end{equation}

\smallskip

\noindent A consequence of the Chinese remainder theorem is the following:

\begin{Prop}\label{gamma-one-2}  

\end{Prop}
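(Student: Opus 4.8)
Since the statement of Proposition~\ref{gamma-one-2} is empty in the excerpt, I reconstruct what it should assert from the notation introduced just before it: namely that $\Gamma_{K_n} = \Gamma(n)$ and $\Gamma_{I_n} = \Gamma_1(n)$, where $\Gamma_L = L \cap G(\bbQ)$ (with the intersection taken inside $G(\bbA_f)$) and where the right-hand sides are the classical subgroups defined in \eqref{principal-congruence}--\eqref{iwahori-congruence}. The plan is to verify these two identities componentwise at each prime using the Chinese remainder theorem, exactly as the sentence preceding the Proposition advertises.

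First I would unwind the definition of $\Gamma_{K_n}$. An element $g \in G(\bbQ) = \SL{M}(\bbQ)$, embedded diagonally into $G(\bbA_f) = \prod'_p \SL{M}(\bbQ_p)$, lies in $K_n = \prod_{p_i \mid n} \calK_{p_i,e_i} \cdot \prod_{p \nmid n} G(\bbZ_p)$ precisely when (a) $g \in \SL{M}(\bbZ_p)$ for every prime $p$, which forces $g \in \SL{M}(\bbZ)$ since $\bbZ = \bbQ \cap \bigcap_p \bbZ_p$; and (b) for each $p_i \mid n$ one has $p_i^{e_i} \mid (g_{j,k} - \delta_{j,k})$ in $\bbZ_p$, equivalently in $\bbZ$. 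Combining the congruences for all $p_i$ via the Chinese remainder theorem — here using the prime factorization $n = \prod p_i^{e_i}$ — yields exactly $g_{j,k} \equiv \delta_{j,k} \pmod n$, i.e.\ $g \in \Gamma(n)$. The reverse inclusion is immediate. The argument for $\Gamma_{I_n} = \Gamma_1(n)$ is formally identical: the local Iwahori-type conditions defining $\calI_{p_i,(e_i)^+}$ — namely $p_i^{e_i-1} \mid g_{j,k}$ for $j < k$ and $p_i^{e_i} \mid (g_{j,k} - \delta_{j,k})$ for $j \ge k$ — assemble under CRT into precisely the conditions $p_i^{e_i-1} \mid g_{j,k}$ ($j<k$) and $p_i^{e_i} \mid (g_{j,k}-\delta_{j,k})$ ($j \ge k$) that define $\Gamma_1(p_i^{e_i})$, and intersecting over $i$ gives $\bigcap_i \Gamma_1(p_i^{e_i}) = \Gamma_1(n)$ by the very definition of $\Gamma_1(n)$.

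There is essentially no obstacle here; the content is bookkeeping. The only point requiring a word of care is the passage from $\bbZ$-congruences to $\bbZ_p$-congruences and back: for a rational integer $a$ and a prime power $p^e$, divisibility $p^e \mid a$ holds in $\bbZ$ iff it holds in $\bbZ_p$, and simultaneous divisibility by coprime prime powers $p_1^{e_1}, \dots, p_s^{e_s}$ is equivalent to divisibility by their product — this is the Chinese remainder theorem, and it is the only substantive tool. I would also remark that the same computation shows $\Gamma_{K_n}$ is normal in $\SL{M}(\bbZ)$ (consistent with the well-known normality of $\Gamma(n)$), whereas $\Gamma_{I_n} = \Gamma_1(n)$ need not be, matching the discussion of Proposition~\ref{gamma-one-1}. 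If instead the intended statement of Proposition~\ref{gamma-one-2} is the comparison of indices $[K_m : K_n] = [\Gamma(n):\Gamma(m)]$ and $[I_m:I_n] = [\Gamma_1(n):\Gamma_1(m)]$ for $m \mid n$, this follows at once by combining the identifications just proved with Lemma~\ref{lem-6} (applied to the family $\{K_m, K_n\}$, resp.\ $\{I_m, I_n\}$, which satisfies strong approximation since $G = \SL{M}$ is simply connected almost simple with $G_\infty$ noncompact).
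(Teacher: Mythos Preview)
Your reconstruction of the statement is correct: the proposition asserts $\Gamma(n) = K_n \cap G(\bbQ)$ and $\Gamma_1(n) = I_n \cap G(\bbQ)$, which is exactly the displayed formula immediately following the proposition environment in the paper. The paper gives no proof at all beyond the sentence ``A consequence of the Chinese remainder theorem is the following,'' and your argument unpacks precisely that remark, so your approach coincides with the paper's; the extra paragraph on indices is not needed but does no harm.
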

$$
\aligned
\Gamma (n) \ = \ K_n \ \cap \ G( {\mathbb Q}) \, , \ \ {\text{\rm{and}}} \ \ \Gamma_1 (n) \ = \ I_n \ \cap \ G( {\mathbb Q}) \ .
\endaligned
$$
\smallskip

\noindent The groups ${\mathcal I}_{p,k^{+}}$ are the same as certain Moy-Prasad filtration subgroups.   Let ${\mathcal I}_{p}$ denote the subgroup consisting of elements in $\SL{M}({\mathbb Z}_{p})$ which are upper triangular modulo $p$.  It is an Iwahori subgroup of $\SL{M}({\mathbb Q}_{p})$. Let $\ScptB (\SL{M}({\mathbb Q}_{p}))$ be the Bruhat-Tits building of $\SL{M}({\mathbb Q}_{p})$.  Let $C$ be the alcove in $\ScptB (\SL{M}({\mathbb Q}_{p}))$ fixed by ${\mathcal I}_{p}$.   Then, for any $x \in C$, and $k \in {\mathbb N}$, we have  ${\mathcal I}_{p,k^{+}} = {\big(}{\SL{M}({\mathbb Q}_{p})}{\big)}_{x,k^{+}}$. 

\medskip

We prove a cusp form result related to the group ${\mathcal K}_{p,k}$.  As a preliminary, we recall the definition of cusp forms for a p-adic or finite field semi-simple group $\ScptG$ and its Lie algebra $\fkg$:  
\begin{itemize}
\item[(i)] A function $\phi \in C^{\infty}_{0} (\fkg )$ is a cusp form if for any $X \in \fkg$ and unipotent radical $\fkn$ of any proper parabolic subalgebra $\fkp \subsetneq \fkg$, the integral 
\begin{equation}\label{lie-algebra-cusp-form-1}
\int_{\fkn} \ \phi \, (X +n ) \ dn \ {\text{\rm{ is zero.}}}
\end{equation}
\item[(ii)] A function $f \in C^{\infty}_{0} (\ScptG )$ is a cusp form if for any $X,Y  \in \ScptG$ and unipotent radical $\ScptU$ of any proper parabolic subgroup $\ScptP \subsetneq \ScptG$, the integral 
\begin{equation}\label{lie-algebra-cusp-form-2}
\int_{\ScptU} \ f(X \, u \, Y) \ du \ {\text{\rm{ is zero.}}}
\end{equation}
\end{itemize}

As noted earlier, the k-th congruence subgroup ${\mathcal K}_{p,k}$ is normal in $\SL{M}({\bbZ}_p)$.  For $k >0$, we now formulate and prove a result on cusp forms associated to certain characters of the group ${\mathcal K}_{p,k}/{\mathcal K}_{p,(k+1)}$.  To simplify the notation (since $p$ will be fixed), we abbreviate ${\mathcal K}_{p,k}$ to ${\mathcal K}_{k}$.  Set ${\calL} = {\fks \fkl}_{M}({\bbZ}_p)$, and  
\begin{equation}
{\calL}_{k} \ := \ \{ \ (x_{i,j}) \in {\calL} \ \big| \ p^k \, | \, \, x_{i,j} \ \} \ = \ p^k {\calL} \ .
\end{equation}
The quotient ${\calL}/{\calL}_1$ is naturally the Lie algebra ${\fks \fkl}_{M}({\bbF}_p)$.  The map $X \rightarrow p^kX$ gives a natural isomorphism $\tau_{k}$ of ${\calL}/{\calL}_1$ with ${\calL}_k/{\calL}_{(k+1)}$
Recall there is an isomorphism  
\begin{equation} \label{cusp-K-one}
\theta \ : \ {\calL}_k/{\calL}_{(k+1)} \ \rightarrow \ {\mathcal K}_{k}/{\mathcal K}_{(k+1)}
\end{equation} 
so that for $x \in {\calL}_k$, that 
$$
\theta (x) \ = \ 1 + x {\text{\rm{ mod }}} {\calK}_{(k+1)}  {\text{\rm{ \ in \ $\GL{M} (\bbZ_p )$}}} \ .
$$
\noindent Therefore, there is a natural isomorphism of ${\mathcal K}_{k}/{\mathcal K}_{(k+1)}$ with ${\fks \fkl}_{M}({\bbF}_p)$.  Any choice of a non-trivial additive character $\psi$ of ${\bbF}_p$ gives an identification of the Pontryagin dual of ${\fks \fkl}_{M}({\bbF}_p)$ with ${\fkg \fkl}_{M}({\bbF}_p)/{\bbF}_p$, via the composition of the pairing
\begin{equation} \label{cusp-K-two}
\aligned
{\fks \fkl}_{M}({\bbF}_p) \ \times \ {\fkg \fkl}_{M}({\bbF}_p) \ &\longrightarrow \ {\bbF}_p \\
( \, X \, , \, Y \, ) \qquad &\longrightarrow \ \trace ( \, XY \, )  
\endaligned
\end{equation}
\noindent with $\psi$.  Write 
\begin{equation} \label{cusp-K-three}
\aligned
\psi_{Y} \ : \ {\fks \fkl}_{M}({\bbF}_p) \ \longrightarrow& \ {\bbC}^{\times} \\
X \ {\overset {\psi_{Y}} \longrightarrow}& \ \psi ( \, \trace ( \, XY \, ) \, ) \ .
\endaligned
\end{equation}

Take $Y \in {\fkg \fkl}_{M}({\bbF}_p)$ to be an element whose characteristic polynomial is irreducible.  Such an element exists since there is a finite extension of ${\bbF}_p$ of degree $M$.   The following proposition is elementary.

\begin{Prop} \label{SL-cusp-forms}
Suppose $Y \in {\fkg \fkl}_{M}({\bbF}_p)$ has irreducible characteristic polynomial:

\smallskip

\begin{itemize}
\item[(i)] If $\fkp ({\bbF}_p) \subsetneq {\fkg \fkl}_{M}({\bbF}_p)$ is any parabolic subalgebra of ${\fkg \fkl}_{M}({\bbF}_p)$, then $Y \notin \fkp ({\bbF}_p)$. 

\smallskip

\item[(ii)] The character $\psi_{Y}$ of ${\fks \fkl}_{M}({\bbF}_p)$ is a cusp form.

\smallskip

\item[(iii)] The inflation of $\psi_{Y}$ to ${\calK}_k$ via \eqref{cusp-K-one}, when extended to $\SL{M}(\bbQ_p)$ by setting it zero off  ${\calK}_k$ is a cusp form.

\smallskip

\item[(iv)] For each positive integer $k$, there exists an irreducible supercuspidal representation $(\rho ,W_{\rho})$ which has a non-zero ${\calK}_{k+1}$ fixed vector, but no non-zero ${\calK}_{k}$-fixed vector.
\end{itemize}
\end{Prop}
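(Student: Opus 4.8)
The plan is to prove the four parts in order, since each builds on the previous one.

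\textbf{Part (i).} Given a proper parabolic subalgebra $\fkp({\bbF}_p) \subsetneq {\fkg \fkl}_{M}({\bbF}_p)$, I would argue by contradiction: suppose $Y \in \fkp({\bbF}_p)$. Since $\fkp$ is proper, it stabilizes a nontrivial proper flag, in particular a nonzero proper subspace $V \subsetneq {\bbF}_p^M$. Then $YV \subseteq V$, so $V$ is a nontrivial proper $Y$-invariant subspace, hence the minimal polynomial of $Y$ has a proper factor corresponding to $V$ — contradicting irreducibility of the characteristic polynomial (which forces the characteristic polynomial to equal the minimal polynomial and to be irreducible, so ${\bbF}_p^M$ is a simple ${\bbF}_p[Y]$-module). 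This is routine.

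\textbf{Part (ii).} To show $\psi_Y$ is a cusp form on ${\fks \fkl}_M({\bbF}_p)$ in the sense of \eqref{lie-algebra-cusp-form-1}, fix $X \in {\fks\fkl}_M({\bbF}_p)$ and let $\fkn$ be the nilradical of a proper parabolic $\fkp \subsetneq {\fks\fkl}_M({\bbF}_p)$. I must show $\sum_{n \in \fkn} \psi_Y(X+n) = 0$, i.e. $\psi(\trace(XY)) \sum_{n\in\fkn} \psi(\trace(nY)) = 0$. The inner sum is a sum of a nontrivial additive character composition over the finite group $\fkn$: it vanishes unless $n \mapsto \psi(\trace(nY))$ is trivial on $\fkn$, i.e. unless $\trace(nY) = 0$ for all $n \in \fkn$. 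But $\fkn$ is the nilradical of a proper parabolic of ${\fks\fkl}_M$; enlarging $\fkp$ to a parabolic $\fkp'$ of ${\fkg\fkl}_M$ with the same nilradical, the condition $\trace(nY)=0$ for all $n \in \fkn$ says $Y$ is orthogonal to $\fkn$ under the trace form, which by standard parabolic duality means $Y \in \fkp'({\bbF}_p)$ (the annihilator of $\fkn$ under $\trace$ is the opposite parabolic, and up to conjugation this lands $Y$ in some proper parabolic of ${\fkg\fkl}_M$) — contradicting part (i). Hence the inner sum is zero and $\psi_Y$ is a cusp form. I'd phrase the duality carefully to avoid sign/opposite-parabolic confusion.

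\textbf{Part (iii).} Let $\phi$ be the function on $\SL{M}(\bbQ_p)$ that is the inflation of $\psi_Y$ via the identifications \eqref{cusp-K-one}, \eqref{cusp-K-three} on ${\calK}_k$ and zero elsewhere. I need the group-theoretic cusp form condition \eqref{lie-algebra-cusp-form-2}: $\int_{\ScptU} \phi(g\, u\, g')\, du = 0$ for all $g, g'$ and all unipotent radicals $\ScptU$ of proper parabolics $\ScptP \subsetneq \SL{M}(\bbQ_p)$. The support condition forces $g u g' \in {\calK}_k$; since ${\calK}_k$ is normal in $\SL{M}(\bbZ_p)$ and small, the integrand is supported on $u$ in a coset, and on that support $\phi(gug')$ becomes (a constant times) $\psi_Y$ evaluated on the ${\fks\fkl}_M({\bbF}_p)$-component of the $u$-part. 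Reducing mod ${\calK}_{k+1}$, the exponential map $\theta$ converts the integral over $\ScptU \cap$ (a coset of ${\calK}_k$) into a sum over a coset of $\fkn \cap {\calL}_k/{\calL}_{k+1}$ inside the finite Lie algebra, where $\fkn$ is the (reduction of the) nilradical. Then part (ii) — or rather the same character-sum vanishing argument as in (ii), now applied to the relevant nilradical in ${\fks\fkl}_M({\bbF}_p)$ — gives zero. The care needed here is bookkeeping: relating a $p$-adic parabolic's unipotent radical intersected with ${\calK}_k$ to a parabolic nilradical over ${\bbF}_p$, and checking the integral genuinely reduces to the finite-field sum; one also must handle the case where $\ScptU \cap {\calK}_k g'^{-1} g^{-1}$ could be empty (integral trivially zero) versus nonempty.

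\textbf{Part (iv).} Fix $k$. Consider the compact induction or simply the representation of $\SL{M}(\bbZ_p)$ on functions on ${\calK}_k/{\calK}_{k+1} \cong {\fks\fkl}_M({\bbF}_p)$; the character $\psi_Y$ occurs, and by part (iii) its inflation is a cusp form matrix-coefficient-type object. The standard way to produce the supercuspidal: form $c\text{-}\mathrm{Ind}$ from ${\calK}_k$ (or from a suitable compact-mod-center subgroup stabilizing $\psi_Y$) of the character $\psi_Y$, or invoke the theory of cusp forms on $p$-adic groups (as in \cite{MoyPr1}, \cite{MoyPr}): a depth-$\ge 0$ cusp form / unrefined minimal $K$-type of the form $(\,{\calK}_k, \psi_Y\,)$ with $Y$ regular elliptic (irreducible characteristic polynomial $\Rightarrow$ $Y$ generates an unramified degree-$M$ field extension $\Rightarrow$ anisotropic mod center) gives rise to an irreducible supercuspidal representation $(\rho, W_\rho)$. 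By construction the depth of $\rho$ is exactly $k$ (because $\psi_Y$ does not factor through ${\calK}_k/{\calK}_{k+1} \to$ smaller, being nontrivial), so $W_\rho^{{\calK}_{k+1}} \ne 0$ while $W_\rho^{{\calK}_k} = 0$: a vector fixed by ${\calK}_k$ would force the minimal $K$-type to have depth $< k$. I would cite the Moy--Prasad unrefined minimal $K$-type theory for the existence of $\rho$ and for the depth computation, rather than reprove it.

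The main obstacle I anticipate is part (iii): carefully identifying the intersection of a $p$-adic parabolic unipotent radical with the congruence subgroup ${\calK}_k$, showing the group-level integral reduces exactly (with the right normalization and after translating by $g, g'$) to a finite-field character sum over a coset of a genuine parabolic nilradical of ${\fks\fkl}_M({\bbF}_p)$, and then applying (ii). The conjugation by $g, g' \in \SL{M}(\bbQ_p)$ (not just $\SL{M}(\bbZ_p)$) is the subtle point, since ${\calK}_k$ is not normal in $\SL{M}(\bbQ_p)$; one must use that $\phi$ is supported on ${\calK}_k$ and that any translate of a unipotent radical meeting ${\calK}_k$ does so in a coset of its intersection with a (possibly conjugated) parabolic nilradical, which after reduction mod ${\calK}_{k+1}$ still yields a proper parabolic nilradical over ${\bbF}_p$ — this is where the "elementary" claim requires the most attention.
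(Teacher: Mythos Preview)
Your approach is correct and tracks the paper's closely for (i) and (ii). For part (iii), where you flag the main obstacle of handling arbitrary translates, the paper resolves it cleanly via the Iwasawa decomposition $\SL{M}(\bbQ_p)=Q\,\calK$: any proper parabolic $P$ is $g^{-1}Qg$ for a standard $Q$, and writing $g=v_gk_g$ with $v_g\in Q$, $k_g\in\calK$, the integral $\int_{U_P}\phi(xn)\,dn$ becomes (up to a Jacobian constant) $\int_{k_g^{-1}N_Qk_g}\phi(xu)\,du$. Since $k_g^{-1}N_Qk_g$ is the unipotent radical of a $\calK$-conjugate of a standard parabolic and $\calK_k$ is normal in $\calK$, the support condition reduces to $x\in\calK_k$, and the reduction mod $\calK_{k+1}$ lands exactly in a proper parabolic nilradical over $\bbF_p$, so (ii) applies. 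This Iwasawa trick is the missing ingredient in your sketch. For part (iv), the paper is more constructive than your citation-based plan: it forms $V_\chi=\text{c-Ind}^{\ScptG}_{\calK_k}(\chi)$, shows the Hecke algebra $\calH(\ScptG//\calK_k,\chi^{-1})$ is finite-dimensional (the cusp form property together with the Cartan decomposition $\calK A^{+}\calK$ bounds the support), so $V_\chi$ decomposes as a finite sum of irreducibles; each summand $\sigma$ contains $\chi$ by Frobenius, and an explicit matrix-coefficient computation using $\int_{\ScptU\cap\calK_k}\chi(u)\,du=0$ shows $\sigma$ is supercuspidal, with Moy--Prasad depth theory then excluding $\calK_k$-fixed vectors. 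Your appeal to unrefined minimal $K$-types is legitimate but less self-contained; in particular, Moy--Prasad does not directly hand you an irreducible supercuspidal from the datum $(\calK_k,\psi_Y)$---you still need the compact-induction/finite-Hecke-algebra step (or an equivalent) to produce one.
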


\smallskip

\begin{proof}  To prove part (i), suppose $\fkp ({\bbF}_p) \subsetneq {\fkg \fkl}_{M}({\bbF}_p)$ is any parabolic subalgebra and $\fkp ({\bbF}_p) = \fkm ({\bbF}_p) + \fku ({\bbF}_p)$ is a Levi decomposition.  For any $Z \in \fkp ({\bbF}_p)$, let $Z_{\fkm ({\bbF}_p)}$ be the projection of $Z$ to ${\fkm ({\bbF}_p)}$.  Then $Z$ and $Z_{\fkm ({\bbF}_p)}$ have the same characteristic polynomial, and the latter characteristic polynomial is clearly not irreducible.  Thus, if $Y \in {\fkg \fkl}_{M}({\bbF}_p)$ has irreducible characteristic polynomial, it cannot lie in any $\fkp ({\bbF}_p) \subsetneq {\fkg \fkl}_{M}({\bbF}_p)$.

\medskip

To prove (ii), suppose $x \in {\fks \fkl}_{M}({\bbF}_p)$, and $\fkp ({\bbF}_p) = \fkm ({\bbF}_p) + \fku ({\bbF}_p)$ is a proper parabolic subalgebra.  Then 
$$
\int_{\fku ({\bbF}_p)} \psi_{Y} (x+n) \, dn \ = \ \psi_{Y}(x) \ \int_{\fku ({\bbF}_p)} \psi_{Y} (n) \, dn
$$
Since $Y$ is not contained in any parabolic subalgebra of ${\fkg \fkl}_{M}({\bbF}_p)$, the integrand on the right side is a non-trivial character of ${\fku ({\bbF}_p)}$ and therefore the integral is zero.  Whence, $\psi_{Y}$ is a cusp form.

\medskip

To prove (iii), denote the inflation of $\psi_{Y}$ by $\psi_{Y} \circ \theta^{-1}$.  Suppose $P \subset \SL{M}({\bbQ}_p)$ is a parabolic subgroup.  Then $P$ is conjugate to a standard `block upper triangular' parabolic subgroup $Q=M_{Q}N_{Q}$ of $\SL{M}({\bbQ}_p)$, i.e., 
$$
P \ = \ g^{-1} Q g \ = \ ( \, g^{-1}M_{Q}g \, ) \ ( \, g^{-1}N_{Q}g \, ) \ {\text{\rm{ \ and \ $U_{P} = g^{-1}N_{Q}g$. }}}
$$ 
\noindent Since $\SL{M}({\bbQ}_p) = Q{\mathcal K}$, express $g$ as $g = v_{g} k_{g}$ with $v_{g} \in Q$, and $k_{g} \in {\mathcal K}$.  Then, 
$$
\aligned
\int_{U_{P}} \ \psi_{Y} \circ \theta^{-1}(xn) \ dn \ &= \ \int_{U_{Q}} \ \psi_{Y} \circ \theta^{-1}(xg^{-1}ug) \ du \\
&= \ \int_{U_{Q}} \ \psi_{Y} \circ \theta^{-1}(xk_{g}^{-1} v_{g}^{-1} u v_{g} k_{g}) \ du \\
&= \ c \ \int_{U_{Q}} \ \psi_{Y} \circ \theta^{-1}(xk_{g}^{-1}  u k_{g}) \ du \ \ {\text{\rm{(suitable constant $c$)}}}\\
&= \ c \ \int_{k_{g}^{-1}U_{Q}k_{g}} \ \psi_{Y} \circ \theta^{-1}(x u ) \ du \ \\
\endaligned
$$
\noindent From the last line, since  $\psi_{Y} \circ \theta^{-1}$ has support in ${\mathcal K}_k$, to prove the integral vanishes, it suffices to do so when $x \in {\mathcal K}_k$.  In this situation the integral vanishes by part (ii).  Thus $\psi_{Y} \circ \theta^{-1}$ is a cusp form on $\SL{M}({\bbQ}_p)$.

\medskip

In regards to part (iv), set $\chi = ( \psi_{Y} \circ \theta^{-1} )$, and let $V_{\chi}$ be the representation of $\ScptG = \SL{M}({\bbQ}_p)$ generated by the left translates of the cusp form $\chi$.  It is the induced representation ${\text{\rm{c-Ind}}}^{\ScptG}_{{\mathcal K}_{k}} ( \chi )$.  As a representation inside the unitary representation $L^{2}( \ScptG )$, $V_{\chi}$ is unitarizable and therefore completely reducible.     The Hecke algebra 
$$
\aligned
{\mathcal H}(\ScptG //{{\mathcal K}_{k}} \, , \,  \chi^{-1} ) \ :&= \ \{ \ f \in C^{\infty}_{0}( (\ScptG ) \ | \ \forall \, m_a,m_b \in K_{k} {\text{\rm{ and }}} g \in SL_{M}({\bbQ}_p) \ , \ \\
&\qquad \quad f(m_a \, g \, m_b) \, = \, \chi (m_a)^{-1} \, f(g) \, \chi (m_b)^{-1}) \ \} \ ,
\endaligned
$$
is the endomorphism algebra of the unitary representation  $V_{\chi}$.  It follows from the fact that $\chi$ is a cusp form and the Cartan decompostion $\ScptG = {\mathcal K} A^{+} {\mathcal K}$ ($A$ the subgroup of diagonal matrices), that there is a sufficiently large compact subset $C \subset \ScptG$ so that the support of any $f \in {\mathcal H}(\ScptG //{{\mathcal K}_{k}} \, , \,  \chi^{-1} )$ is contained in $C$.  The dimension of  ${\mathcal H}(\ScptG //{{\mathcal K}_{k}} \, , \,  \chi^{-1} )$ is therefore finite (see also \cite{harish-chandra} page 28), and consequently the (unitarizable) representation $V_{\chi}$ has finite length.  Whence,  $V_{\chi}$ is a finite direct sum of irreducible representations.  By Frobenius reciprocity, any irreducible subrepresentation $\sigma$ of ${\text{\rm{c-Ind}}}^{SL_{M}({\bbQ}_p)}_{{\mathcal K}_{k}} ( \chi )$ contains the character ${\chi}$.  Let $\langle , \rangle$  be the unitary form $\sigma$ inherits as a subrepresentation of $L^{2} (\ScptG )$.  Write $\lambda$ for the left translation action, and take a non-zero $v$ so that 
$$
\forall \ m \in {\mathcal K}_{k} : \quad \lambda (m) v \ = \ \chi (m) v \ .
$$
Consider the (non-zero) matrix coefficient $L_{v}$ defined as $L_{v}(g) = \langle \, \lambda (g) v \, , \, v \rangle$.  We claim, as a consequence of $\chi$ being 
a cusp form, that $L_{v}$ is a cusp form too.  Indeed, for any $X \in \ScptG$, and ${\mathcal U}$ the unipotent radical of any proper parabolic group ${\mathcal P} \subset \ScptG$, we have:

$$
\aligned
\int_{\ScptU \cap \ScptK_{k}} L_{v}(Xu) \ du &= \ \int_{\ScptU \cap \ScptK_{k}} \ \langle \, \lambda (X) \, \lambda (u) v \, , \, v \, \rangle \ du  \\
&= \  \langle \, \lambda (X) ( \, \int_{\ScptU \cap \ScptK_{k}} \lambda (u) v \, du \, ) \, , \, v \, \rangle  \ . \\
\endaligned 
$$
Since $( \int_{\ScptU \cap \ScptK_{k}} \chi (u) \, du )$ is zero, the integral $\int_{\ScptU \cap \ScptK_{k}} \lambda (u) v \ du = ( \int_{\ScptU \cap \ScptK_{k}} \chi (u) \, du ) \, v$ is zero.  So, $\sigma$  is cuspidal.  Also, by \cite{MoyPr1}, the depth of $\sigma$ is $k$, and therefore (by \cite{MoyPr1}, Theorem 5.2) $\sigma$ cannot contain a non-zero ${\mathcal K}_{k}$-fixed vector.     
\end{proof}

\medskip

A similar result can be shown with the group ${\mathcal K}_{p,k}$ replaced by the group ${\mathcal I}_{p,k^{+}}$.  We explain it in the next subsection where we treat the more general getting of a split simple group.

\bigskip

\subsection{The case of a split simple group}\label{case-split-simple} \ 

\smallskip

Suppose $G$ is a split simple algebraic group defined over $\bbZ_{p}$.  Let $B$ be a Borel subgroup of $G$, $A$ a maximal torus of $B$. Set 
\begin{equation}
\aligned
{\mathscr G} :&= G({\mathbb Q}_p) {\text{\rm{ \ and \ }}}
{\mathcal K} := G({\mathbb Z}_p) \ {\text{\rm{a maximal compact subgroup of ${\mathscr G}$.}}}
\endaligned
\end{equation}  
\noindent We can choose the torus $A \subset B$ so that the point in the building fixed by ${\mathcal K}$ lies in the apartment of $A$.  Then, $B$ determines an Iwahori subgroup ${\mathcal I} \subset {\mathcal K}$.

\medskip

\smallskip

Let $\ScptB ({\mathscr G})$ be the Bruhat-Tits building of ${\mathscr G}$.  Let $C = \ScptB ({\mathscr G})^{\mathcal I}$ be the fixed points of 
of the Iwahori subgroup ${\mathcal I}$.  It is an alcove in $\ScptB ({\mathscr G})$.  Take $x_{0}$ to be the barycenter of $C$, and let $\ell$ be the rank of $G$.  Set 

\begin{equation}\label{moy-prasad-0}
k' \ := \ k+ ({\frac{1}{\ell+1}}) \quad {\text{\rm{and}}} \quad k'' \ := \ k+ ({\frac{2}{\ell+1}})
\end{equation}

\noindent Then, in terms of the Moy-Prasad filtration subgroups, define

\begin{equation}\label{moy-prasad-1}
{\mathcal I}_{k}  \ := \ {\mathscr G}_{{x_{0}},k}  \quad {\text{\rm{and}}} \quad 
{\mathcal I}_{k^{+}}  \ := \ {\mathscr G}_{{x_{0}},k'} \ . 
\end{equation}

\smallskip

\noindent For ${\mathscr G} = SL_{M}(\bbQ_{p})$, the group ${\mathcal I}_{k^{+}}$ is identical with the group defined in \eqref{iwahori-k} by the smae symbol.

\smallskip

Let $\Delta$ and $\Delta^{\text{\rm{aff}}}$ be the simple roots and simple affine roots respectively with respect to the Borel and Iwahori subgroups $B$ and ${\mathcal I}$ respectively.  We recall that every $\alpha \in \Delta$ is the gradient part of a unique root  $\psi \in \Delta^{\text{\rm{aff}}}$.  In this way, we view $\Delta$ as a subset of $\Delta^{\text{\rm{aff}}}$.  We recall 

\begin{equation}
{\text{\rm{ the quotient }}} \ \ {\mathscr G}_{{x_{0}},k'} / {\mathscr G}_{{x_{0}},k''} \ \ {\text{\rm{ is canonically }}} {\underset {\psi \in \Delta^{\text{\rm{aff}}}}{\prod}} U_{(\psi+k)} / U_{(\psi+k+1 )} \ .
\end{equation}

\noindent We further recall that a character $\chi$ of ${\mathscr G}_{{x_{0}},k'} / {\mathscr G}_{{x_{0}},k''}$ is non-degenerate if the restriction of $\chi$ to any $U_{(\psi+k)}$ is non-trivial.  In particular, it is clear there exists a non-degenerate character $\chi$ of ${\mathscr G}_{{x_{0}},k'} / {\mathscr G}_{{x_{0}},k''}$ for any integer $k \ge 0$.  For convenience, we identify a function on ${\mathscr G}_{{x_{0}},k'} / {\mathscr G}_{{x_{0}},k''}$ with its inflation to the group ${\mathscr G}_{{x_{0}},k'}$.

\begin{Lem} \label{iwahori-lemma} Let $p$ be a prime such that $G$ is unramified over $\bbQ_{p}$.    Let ${\mathcal I}_{k^{+}}$ ($k \ge 0$) denote the subgroup in \eqref{moy-prasad-1}, and let $\chi$ be a non-degenerate character of  ${\mathscr G}_{{x_{0}},k'} / {\mathscr G}_{{x_{0}},k''}$.  Then, 
\smallskip
\begin{itemize}
\item[(i)] The inflation of $\chi$ to ${\mathscr G}_{{x_{0}},k'}$, when extended to ${\mathscr G}$ by zero outside ${\mathscr G}_{{x_{0}},k'}$, is a cusp form of ${\mathscr G}$.  
\smallskip
\item[(ii)] For each $k \ge 0$, there exists an irreducible supercuspidal representation $(\rho_{p}, W_{p})$ which has a non-zero ${\mathcal I}_{(k+1)^{+}}$--invariant vector but no non-zero ${\mathcal I}_{(k)^{+}}$--invariant vector.
\end{itemize}
\end{Lem}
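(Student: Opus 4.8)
The plan is to follow the template of the proof of Proposition~\ref{SL-cusp-forms}(iii)--(iv), replacing the explicit $\SL{M}$ computations by the general Moy--Prasad structure. For part (i), write $f$ for the inflation of $\chi$ to ${\mathscr G}_{x_{0},k'}$ extended by zero off ${\mathscr G}_{x_{0},k'}$. Since $\chi$ is a character of the abelian quotient ${\mathscr G}_{x_{0},k'}/{\mathscr G}_{x_{0},k''}$, we have $f(m_{1}gm_{2})=\chi(m_{1})\chi(m_{2})f(g)$ for $m_{1},m_{2}\in{\mathscr G}_{x_{0},k'}$. Fix a proper parabolic $P$ and $X,Y\in{\mathscr G}$. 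If $XuY\notin{\mathscr G}_{x_{0},k'}$ for every $u\in U_{P}$ the integral \eqref{lie-algebra-cusp-form-2} vanishes; otherwise pick $u_{0}$ with $m:=Xu_{0}Y\in{\mathscr G}_{x_{0},k'}$, substitute $u=u_{0}v$, and use that $XuY\in{\mathscr G}_{x_{0},k'}$ forces $v\in Y\,{\mathscr G}_{x_{0},k'}\,Y^{-1}$ (because $m$ lies in the group ${\mathscr G}_{x_{0},k'}$), exactly as in Proposition~\ref{SL-cusp-forms}(iii). This yields, for a suitable positive constant $c$,
\[
\int_{U_{P}}f(XuY)\,du\;=\;c\,\chi(m)\int_{U_{P'}\cap{\mathscr G}_{x_{0},k'}}\chi(w)\,dw,\qquad P':=Y^{-1}PY,
\]
with $P'$ again a proper parabolic and the inner integral taken over the compact group $U_{P'}\cap{\mathscr G}_{x_{0},k'}$. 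Thus (i) reduces to the claim that $\chi|_{U_{P}\cap{\mathscr G}_{x_{0},k'}}$ is non-trivial for every proper parabolic $P$.

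To prove the claim, choose a Borel subgroup $B'\subseteq P$ and an apartment ${\mathcal A}$ of $\ScptB({\mathscr G})$ containing $x_{0}$ and a sector representing $B'$ (such an apartment exists by the standard theory of Bruhat--Tits buildings). Inside ${\mathcal A}$, let $\mathfrak c$ be the chamber at infinity of $B'$, let $\Sigma^{+}$ be the roots positive on $\mathfrak c$, and let $J\subsetneq\Delta$ be the simple roots of the Levi factor of $P$, so that $U_{P}=\prod_{\beta\in\Sigma^{+},\;\beta\notin\mathrm{span}(J)}U_{\beta}$. The point $x_{0}$ is the barycenter of the alcove $C$ of ${\mathcal A}$ containing it, whose $\ell+1$ walls give simple affine roots $a_{0},\dots,a_{\ell}$ with gradients $\bar a_{0},\dots,\bar a_{\ell}$ satisfying a relation $\sum_{i}m_{i}\bar a_{i}=0$ with all $m_{i}>0$, and any $\ell$ of these gradients span the root space. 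None of the $\bar a_{i}$ vanishes on $\mathfrak c$; the relation $\sum_{i}m_{i}\bar a_{i}=0$ then forbids all of them being negative on $\mathfrak c$, and, using that positive roots have non-negative coordinates in the simple roots, it forbids that every $\bar a_{i}$ positive on $\mathfrak c$ lie in $\mathrm{span}(J)$ (otherwise all $\bar a_{i}\in\mathrm{span}(J)$, contradicting that they span the root space since $|J|<\ell$). Hence some $\bar a_{i_{0}}\in\Sigma^{+}$ with $\bar a_{i_{0}}\notin\mathrm{span}(J)$, so $U_{\bar a_{i_{0}}}\subseteq U_{P}$. By the Iwahori (Moy--Prasad) factorization of ${\mathscr G}_{x_{0},k'}$ along affine root groups, $U_{\bar a_{i_{0}}}\cap{\mathscr G}_{x_{0},k'}=U_{(a_{i_{0}}+k)}$, whose image in ${\mathscr G}_{x_{0},k'}/{\mathscr G}_{x_{0},k''}\cong\prod_{\psi\in\Delta^{\mathrm{aff}}}U_{(\psi+k)}/U_{(\psi+k+1)}$ is the entire factor indexed by $\psi=a_{i_{0}}$. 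Non-degeneracy of $\chi$ makes it non-trivial on that factor, hence on $U_{P}\cap{\mathscr G}_{x_{0},k'}$, which proves the claim and part (i).

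For part (ii), fix a non-degenerate character $\chi$ of ${\mathscr G}_{x_{0},k'}/{\mathscr G}_{x_{0},k''}$, viewed as a character of ${\mathscr G}_{x_{0},k'}$, and set $V_{\chi}:={\text{\rm{c-Ind}}}_{{\mathscr G}_{x_{0},k'}}^{{\mathscr G}}(\chi)$, realized inside $L^{2}({\mathscr G})$ via the cusp form of part (i), so that it is unitarizable. As in Proposition~\ref{SL-cusp-forms}(iv): because $\chi$ is a cusp form and by the Cartan decomposition ${\mathscr G}={\mathcal K}A^{+}{\mathcal K}$, every element of the Hecke algebra ${\mathcal H}({\mathscr G}/\!/{\mathscr G}_{x_{0},k'},\chi^{-1})$ has support in a fixed compact set, so this algebra is finite-dimensional (cf.\ \cite{harish-chandra}); hence $V_{\chi}$ has finite length and is a finite orthogonal sum of irreducible unitarizable subrepresentations. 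Let $\rho_{p}$ be one of them. By Frobenius reciprocity $\rho_{p}|_{{\mathscr G}_{x_{0},k'}}$ contains $\chi$; choosing $v$ on which ${\mathscr G}_{x_{0},k'}$ acts by $\chi$, the matrix coefficient $g\mapsto\langle\rho_{p}(g)v,v\rangle$ is non-zero and, since $\chi$ is a cusp form, is itself a cusp form, so $\rho_{p}$ is supercuspidal. By \cite{MoyPr1}, $({\mathscr G}_{x_{0},k'},\chi)$ is an unrefined minimal $K$-type of $\rho_{p}$, and as $\chi$ is non-degenerate it is of minimal depth, so $\rho_{p}$ has depth $k+\tfrac{1}{\ell+1}$; consequently (by \cite{MoyPr1}, Theorem~5.2) $\rho_{p}$ has no non-zero ${\mathscr G}_{x_{0},k'}={\mathcal I}_{k^{+}}$-fixed vector. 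On the other hand $\chi$ is trivial on ${\mathscr G}_{x_{0},k''}$, so $v$ is a non-zero ${\mathscr G}_{x_{0},k''}$-fixed vector; and since ${\mathcal I}_{(k+1)^{+}}={\mathscr G}_{x_{0},(k+1)'}\subseteq{\mathscr G}_{x_{0},k''}$, $v$ is also ${\mathcal I}_{(k+1)^{+}}$-fixed. Taking $\rho_{p}$ proves (ii).

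The main obstacle is the claim in part (i): one must verify $\chi|_{U_{P}\cap{\mathscr G}_{x_{0},k'}}\neq 1$ for \emph{every} proper parabolic $P$, not just the standard ones, and this is where the building-theoretic input (an apartment through $x_{0}$ aligned with $P$) together with the linear-algebra property of the gradients $\bar a_{i}$ of the simple affine roots is essential. A secondary technical point is to keep the Moy--Prasad depth bookkeeping --- the interplay between ${\mathcal I}_{k^{+}}$, ${\mathscr G}_{x_{0},k'}$, ${\mathscr G}_{x_{0},k''}$ and the jumps of the filtration at $x_{0}$ --- consistent with the normalization fixed in \eqref{moy-prasad-0}--\eqref{moy-prasad-1}.
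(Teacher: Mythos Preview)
Your argument is correct, and for part (i) it takes a genuinely different route from the paper. The paper reduces to \emph{standard} parabolics: writing $P = gQg^{-1}$ with $Q$ a ${\mathbb Z}_p$-defined parabolic and $g = kh \in {\mathcal K}\,{\mathscr Q}$ via Iwasawa, a change of variables gives $\int_{\mathscr U}\chi(xu)\,du = c\int_{k{\mathscr V}k^{-1}}\chi(xn)\,dn$; the paper then takes $P$ standard, so that ${\mathscr U}$ contains a simple root group $U_\alpha$ with $\alpha \in \Delta$, hence $U_{(\psi+k)}$ for the $\psi \in \Delta^{\text{\rm{aff}}}$ with gradient $\alpha$, and non-degeneracy finishes. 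You instead keep $P$ arbitrary, choose an apartment ${\mathcal A}$ through $x_0$ carrying a sector for a Borel $B' \subset P$, and run a root-system argument on the gradients $\bar a_i$ of the simple affine roots of the alcove $C$ in ${\mathcal A}$ to locate an $\bar a_{i_0}$ among the roots of $U_P$. Your approach handles all parabolics uniformly without the reduction step; the paper's has the advantage that, for standard $P$, the affine-root factorization of ${\mathscr G}_{x_0,k'} \cap {\mathscr U}$ and the non-degeneracy hypothesis refer to the \emph{same} apartment. That is the one point you should spell out: your $a_0, \ldots, a_\ell$ live in the new apartment ${\mathcal A}$, while non-degeneracy of $\chi$ is phrased via $\Delta^{\text{\rm{aff}}}$ in the original one. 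The fix is short --- any two apartments containing $C$ are related by an element of ${\mathcal I}$, and since $[{\mathscr G}_{x_0,0^+},{\mathscr G}_{x_0,k'}] \subset {\mathscr G}_{x_0,k''}$, conjugation by ${\mathcal I}$ on ${\mathscr G}_{x_0,k'}/{\mathscr G}_{x_0,k''}$ factors through the finite torus ${\mathcal I}/{\mathcal I}_+$ and preserves each factor --- so the image of $U_{(a_{i_0}+k)}$ really is one of the factors on which $\chi$ is non-trivial. For part (ii) your argument and the paper's coincide.
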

  
\begin{proof}  To prove part (i), suppose $x \in {\mathscr G}=G({\mathbb Q}_p)$, and ${\mathscr U}=U({\mathbb Q}_p)$ is the unipotent radical of a proper parabolic subgroup ${\mathscr P}=P({\mathbb Q}_p)$ of ${\mathscr G}$.  We need to show 
\begin{equation}\label{local-cusp-from-1}
\int_{{\mathscr U}} \chi (xu) \ du \ = \ 0 \, . 
\end{equation}

\noindent Take $Q \subset {\mathscr G}$ to be a ${\mathbb Z}_p$-defined parabolic subgroup so that ${\mathscr Q}= Q({\mathbb Q}_p)$ is ${\mathscr G}$ conjugate to ${\mathscr P}$, i.e., $P = gQg^{-1}$, with $g \in {\mathscr G}$.   Let $V$ and ${\mathscr V}$ denote the unipotent radical of $Q$, and its group of $\kv$-rational points.  We use the Iwasawa decomposition ${\mathscr G} = {\mathscr K} {\mathscr Q}$ to write $g$ as $g = k h$.  Then,

\begin{equation}
\aligned
\int_{{\mathscr U}} \psi_{Y}(xu) \ du \ &= \  \int_{{\mathscr V}} \psi_{Y}( \, x \, g v g^{-1} \, ) \ dn \ \ {\text{\rm{($u=gvg^{-1}$)}}}\\
&= \  \int_{{\mathscr V}} \psi_{Y}( \, x \, k h v h^{-1} k \, ) \ du  \\
&= \  c \, \int_{{\mathscr V}} \psi_{Y}( \, x \, k v k^{-1} \, ) \ dv  \ \ {\text{\rm{(for a suitable constant $c$)}}} \\ 
&= \ c \,  \int_{k{\mathscr V}k^{-1}} \psi_{Y}( \, x \, n \, ) \ dn \   .
\endaligned
\end{equation}

\noindent In particular, we can reduce to the case where the parabolic $P$ is a ${\mathbb Z}_p$-defined subgroup of ${\mathscr G}$.   But, then $P$ is ${\mathcal K}$-conjugate to a standard parabolic subgroup of ${\mathscr G}$ with respect to the maximal split torus $A$.  So, we can and do assume $P$ is a standard parabolic.  

\smallskip
Observe that since ${\text{\rm{supp}}}(\chi) = {\mathscr G}_{{x_0},k'}$, to show  \eqref{local-cusp-from-1}, it suffices to take $x \in {\mathscr G}_{{x_0},k'}$.  Then, $xu \in {\mathscr G}_{{x_0},k'}$ if and only if $u \in {\mathscr G}_{{x_0},k'} \, \cap \, {\mathscr U}$, so 

\begin{equation}\label{integral-iwahori-1}
\aligned
\int_{{\mathscr U}} \chi (xu) \ du \ &= \ \int_{{\mathscr G}_{{x_0},k'} \ \cap \ {\mathscr U}} \chi (xu) \ du \\
\endaligned
\end{equation}
\noindent  The intersection ${{\mathscr G}_{{x_0},k'} \, \cap \, {\mathscr U}}$ is a product of affine root subgroups.  Combining this with the fact that $\chi$ is a character, we see that the integral over ${{\mathscr G}_{{x_0},k'} \ \cap \ {\mathscr U}}$ is a product of integrals over the affine root subgroups.  Since $U$ is the radical of a proper standard parabolic subgroup, at least one of the $A$-roots in $U$ is the gradient of an affine root $\psi \in \Delta^{\text{\rm{aff}}}$.  But then 

\begin{equation}\label{integral-iwahori-2}
\aligned
\int_{{\mathscr G}_{{x_0},k'} \ \cap \ {\mathscr U}_{\psi}} \chi (xu) \ du \ = \ 0 \\
\endaligned
\end{equation}

\noindent since $\chi$ is a non-trivial character of ${\mathscr U}_{(\psi +k)} = {\mathscr G}_{{x_0},k'} \ \cap \ {\mathscr U}_{\psi}$.  Thus, $\chi$ is a cusp form.   This completes the proof of part (i).

\medskip

To prove part (ii), let $V_{\chi}$ denote the vector space spanned by left translations of $\chi$.  That $\chi$ is a cusp form of ${\mathscr G}$ means $V_{\chi}$, as a representation of ${\mathscr G}$, is a direct sum of finitely many irreducible cuspidal representations, and by Frobenius reciprocity each irreducible cuspidal representation $\sigma$ which appears when restricted to ${\mathscr G}_{x_{0},k'}$ contains the character $\chi$.  In particular, $\sigma$ contains a non-zero ${\mathscr G}_{x_{0},k''}$-fixed vector, whence a non-zero ${\mathcal I}_{(k+1)^{+}}$-fixed vector.  The fact that $\sigma$ contains the non-degenerate character $\chi$ and $\sigma$ is assumed to be irreducible means it cannot have a ${\mathscr G}_{x_{0},k'}={\mathcal I}_{k^{+}}$-fixed vector.  So (ii) holds.

\end{proof}

\bigskip

\section{Examples of open compact subgroups ${\calF}$ satisfying assumptions
\eqref{assumptions}}\label{sec-3}

\medskip

We produce examples of finite sets ${\calF}$ of open compact subgroups of $G({\bbA}_f)$ satisfying the assumptions \eqref{assumptions}.  

\medskip

Suppose $G = \SL{M}$.
\smallskip 
\begin{itemize}
\item[$\bullet$] Fix a positive integer $D$.  For each positive divisor $d$ of $D$, set $K_d$ as in \eqref{open-compact-SL}.  Then, as a consequence of Proposition \eqref{SL-cusp-forms}, the finite family 
$$
{\calF} \ = \ \{ \, K_d \ \big| \ \ d \, | \, D \ \}
$$
satisfies the assumptions in \eqref{assumptions}.  Whence, Theorem \eqref{intr-thm} applies to this family.   As already mentioned in Proposition \eqref{gamma-one-2} $K_d \cap \SL{M}({\bbQ})$ is the principal congruence subgroup $\Gamma (d)$ of \eqref{principal-congruence}.
\smallskip

\item[$\bullet$] Fix a positive integer $D$.  For each positive divisor $d$ of $D$, set $J_d$ as in \eqref{open-compact-SL}.  Then, as a consequence of Lemma \eqref{iwahori-lemma}, the finite family 
$$
{\calF} \ = \ \{ \, J_d \ \big| \ \ d \, | \, D \ \}
$$
satisfies the assumptions in \eqref{assumptions}.  Whence, Theorem \eqref{intr-thm} applies to this family.   Here, $J_d \cap \SL{M}({\bbQ})$ is the subgroup $\Gamma_1 (d)$ of \eqref{iwahori-congruence}.

\end{itemize}

Recall that we have been assuming $G$ is simply connected, absolutely almost simple over $\bbQ$ and $G_\infty$ is not compact.   Let $S_{f} = \{ p_1, \dots , p_r, p_{r+1}, \dots , p_{r+s} \}$ be primes satisfying the following:

\smallskip

\begin{itemize}
\item[(i)] For $v \notin S_{f}$, the group $G$ is unramified at $v$.   
\smallskip
\item[(ii)] For $v \in S_{f}$, we consider two cases: 
\begin{itemize}
\item[(ii.1)] For $1 \le i \le r$, we are given open compact subgroups ${\mathcal L}_{p_i} \subset G({\bbQ}_{p_i})$.
\smallskip
\item[(ii.2)] For $(r+1) \le i \le (r+s)$, the group $G$ is unramified at $p_i$.
For each $p_i$, take $C_i$ to be an alcove in the Bruhat-Tits building and 
$x({C_i})$ the barycenter of $C_i$.
\end{itemize}
\end{itemize} 

\smallskip

\noindent Fix some exponents $e_{r+1}, \dots , e_{r+s}$, and set
$$
D \ = \ p_{r+1}^{e_{r+1}} \cdots p_{j}^{e_{j}} \cdots p_{r+s}^{e_{r+s}} \ .
$$
For $d = p_{r+1}^{\alpha_{r+1}} \cdots p_{r+s}^{\alpha_{r+s}}$ a divisor of $D$, set 
\begin{equation}\label{open-compact-3}
L_{d} \ := \ {\underset {i=1} {\overset {r} \prod}} \ {\mathcal L}_{p_i} \ \ {\underset {i=(r+1)} {\overset {(r+s)} \prod} } \ G({\bbQ_{p_i}})_{x({C_i}),\alpha_{i}'}  \ \  {\underset {v \notin S_f} {\prod}} \ G({\bbZ}_p) \ .
\end{equation}
Here, $\alpha_{i}'$ is defined as in \eqref{moy-prasad-0}.  Then, $\calF = \{ \ L_d \ \big| \ \  d \, | \, D \ \}$ is a family of open compact subgroups of $G({\bbA}_f)$ satisfying the assumptions \eqref{assumptions}. 
Whence, Theorem \eqref{intr-thm} applies to this family.   

\smallskip

We note that if we had selected a different choice of alcoves $C_{i}^{\bullet}$, then the groups $G({\bbQ_{p_i}})_{x({C_i}),\alpha_{i}'}$ and $G({\bbQ_{p_i}})_{x({C_{i}^{\bullet}}),\alpha_{i}'}$ are conjugate in $G({\bbQ_{p_i}})$, say $g_{p,i} \, G({\bbQ_{p_i}})_{x({C_i}),\alpha_{i}'} g_{p,i}^{-1} = G({\bbQ_{p_i}})_{x({C_{i}^{\bullet}}),\alpha_{i}'}$.  Denote by $L_{d}^{\bullet}$, the open compact subgroup of $G({\bbA}_f)$ obtained in \eqref{open-compact-3} by replacing $G({\bbQ_{p_i}})_{x({C_{i}}),\alpha_{i}'}$ with $G({\bbQ_{p_i}})_{x({C_{i}^{\bullet}}),\alpha_{i}'}$.  In $G({\bbA}_f)$ the element
$$
g = \ {\underset {i=1} {\overset {r} \prod}} \ 1_{G({\bbQ_{p_i}})} \ \ {\underset {i=(r+1)} {\overset {(r+s)} \prod} } \ g_{p,i}  \ \  {\underset {v \notin S_f} {\prod}} \ 1_{G({\bbQ}_p)}
$$ 
conjugates $L_{d}$ to $L_{d}^{\bullet}$.  Since $G({\bbA}_f)$ satisfies strong approximation, $g = g_{\bbQ} h_{L_{d}}$, with $g_{\bbQ} \in G({\bbQ})$ and $h_{L_{d}} \in L_{d}$.  It follows  $L_{d}$ and $L_{d}^{\bullet}$ are conjugate by the element $g_{\bbQ}$.  In particular, the intersections 
$$
L_{d} \cap G({\bbQ}) \ {\text{\rm{\ and \ }}} \ L_{d}^{\bullet} \cap G({\bbQ})
$$
are conjugate by the element $g_{\bbQ}$  in  $G({\bbQ})$.

\bigskip

\section{Some Additional Results for $SL_{M}$}\label{sec-4}

In this section we let $G=SL_M$.  Set $G_{\infty} := SL_{M}(\bbR )$, and $K_{\infty} := \mathrm{SO} (M)$.  

\medskip

We prove some simple properties of the intersection of the principal congruence subgroups $\Gamma (m)$ with $K_\infty$. 

\begin{Lem}\label{nlem-2} Let $m\ge 1$. If $g\in \Gamma(m)$ is a diagonal element, then 
$g_{ii}\in \{\pm 1\}$ for all $i=1, \ldots, M$.
\end{Lem}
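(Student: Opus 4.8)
The statement to prove is \textbf{Lemma \ref{nlem-2}}: if $m \ge 1$ and $g \in \Gamma(m)$ is diagonal, then every diagonal entry $g_{ii}$ lies in $\{\pm 1\}$. The key observation is simply that $\Gamma(m) \subset \SL{M}(\bbZ)$ by the definition \eqref{principal-congruence}, so a diagonal element of $\Gamma(m)$ is a diagonal integer matrix of determinant $1$. Each diagonal entry $g_{ii}$ is therefore a nonzero integer (nonzero because the determinant is $1$), and the product $\prod_{i=1}^{M} g_{ii} = \det(g) = 1$. A finite collection of nonzero integers whose product is $1$ must each be a unit in $\bbZ$, hence each $g_{ii} \in \{\pm 1\}$.

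\textbf{Steps, in order.} First I would recall that membership in $\Gamma(m)$ forces $g \in \SL{M}(\bbZ)$, so the entries are integers and $\det(g) = 1$. Second, since $g$ is diagonal, $\det(g) = \prod_i g_{ii}$, so $\prod_i g_{ii} = 1$; in particular no $g_{ii}$ is zero, so each $g_{ii}$ is a nonzero integer. Third, I would invoke the elementary fact that if $a_1, \dots, a_M$ are nonzero integers with $a_1 \cdots a_M = 1$, then $|a_i| = 1$ for every $i$ (since $|a_i| \ge 1$ for each $i$, and the product of their absolute values is $1$, forcing all of them to equal $1$). Hence $g_{ii} \in \{+1, -1\}$ for all $i$, which is the claim. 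One could optionally remark that the congruence condition $g_{ii} \equiv 1 \pmod m$ gives no extra information here beyond integrality — it is not needed for the conclusion — but it does of course further restrict which sign patterns can occur for a given $m$.

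\textbf{Main obstacle.} There is essentially no obstacle: this is a genuinely elementary statement whose proof is a two-line argument using only that $\Gamma(m) \subset \SL{M}(\bbZ)$ and that the units of $\bbZ$ are $\pm 1$. The only thing to be slightly careful about is not to over-complicate matters — the congruence modulo $m$ is a red herring for the stated conclusion, and the entire content is that a diagonal matrix in $\SL{M}(\bbZ)$ has $\pm 1$ on the diagonal.
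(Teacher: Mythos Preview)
Your proof is correct and takes essentially the same approach as the paper: the paper's proof is the single line ``Since $g_{11}g_{22}\cdots g_{MM}=1$, the claim follows,'' and your argument just spells out why this product condition forces each integer entry to be a unit in $\bbZ$.
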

\begin{proof} Since  $g_{11}g_{22}\cdots g_{MM}=1$, the claim follows. 
\end{proof}

\vskip .2in 

\begin{Lem}\label{nlem-3} Let $m\ge 3$. If $g\in
 \Gamma(m)$ is a diagonal element, then 
$g=I_{M \times M}$.
\end{Lem}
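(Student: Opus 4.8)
The plan is to combine Lemma \ref{nlem-2} with the congruence condition defining $\Gamma(m)$. Let $g = \diag(g_{11}, \ldots, g_{MM}) \in \Gamma(m)$ be a diagonal element. By Lemma \ref{nlem-2}, each $g_{ii} \in \{\pm 1\}$. On the other hand, the defining condition of $\Gamma(m)$ in \eqref{principal-congruence} requires $g_{ii} \equiv 1 \mymod m$ for every $i$, i.e. $m \mid (g_{ii} - 1)$.

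First I would observe that if $g_{ii} = -1$, then $g_{ii} - 1 = -2$, so the congruence $m \mid (g_{ii} - 1)$ forces $m \mid 2$, contradicting $m \ge 3$. Hence $g_{ii} = 1$ for all $i$, so $g = I_{M \times M}$. That is the entire argument.

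The only ``obstacle'' — and it is a trivial one — is making sure one invokes the hypothesis $m \ge 3$ rather than $m \ge 2$: for $m = 2$ the diagonal matrix $\diag(-1,-1,1,\ldots,1)$ (when $M \ge 2$) lies in $\Gamma(2)$, so the bound is sharp. I would note this in passing but the proof itself is just the two sentences above, so no serious calculation is needed.

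\begin{proof} Let $g=\diag(g_{11},\ldots,g_{MM})\in\Gamma(m)$. By Lemma \ref{nlem-2}, $g_{ii}\in\{\pm1\}$ for each $i$. By the definition \eqref{principal-congruence} of $\Gamma(m)$, we have $m\mid (g_{ii}-1)$ for each $i$. If some $g_{ii}=-1$, then $m\mid 2$, contradicting $m\ge 3$. Hence $g_{ii}=1$ for all $i$, i.e. $g=I_{M\times M}$. \end{proof}
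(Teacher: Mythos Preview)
Your proof is correct and follows essentially the same approach as the paper: invoke Lemma \ref{nlem-2} to get $g_{ii}\in\{\pm1\}$, use the congruence $g_{ii}\equiv 1 \mymod m$, and observe that $m\ge 3$ rules out $g_{ii}=-1$.
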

\begin{proof} By Lemma \ref{nlem-2}, $g_{ii}\in \{\pm 1\}$ for all $i=1, \ldots, M$. Since 
$g_{ii}\equiv 1 \ (mod \ m)$, we obtain $m| (\pm 1 -1)$. Finally, $m\ge 3$ implies that 
$g_{ii}= 1$ for all $i=1, \ldots, M$.
\end{proof}

\vskip .2in

\begin{Lem}\label{nlem-4} Let $m\ge 3$. Then $\Gamma(m)\cap K_\infty$ is the identity subgroup. 
\end{Lem}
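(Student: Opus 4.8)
The plan is to reduce the claim about $\Gamma(m) \cap K_\infty$ to the diagonal case already handled in Lemma \ref{nlem-3}. The key observation is that $K_\infty = \mathrm{SO}(M)$ consists of orthogonal matrices, and if $g \in \Gamma(m)$ with $m \ge 3$, then $g \equiv I \pmod m$; in particular every off-diagonal entry of $g$ is divisible by $m$ and every diagonal entry is $\equiv 1 \pmod m$. Since $g$ is orthogonal, each row of $g$ is a unit vector in $\bbR^M$, so each entry $g_{ij}$ satisfies $|g_{ij}| \le 1$. For an off-diagonal entry this forces $g_{ij} = 0$, because the only integer in $[-1,1]$ divisible by $m \ge 3$ is $0$. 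Hence $g$ is already a diagonal matrix, and Lemma \ref{nlem-3} then gives $g = I_{M\times M}$.

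First I would spell out that $g \in \Gamma(m) \subset \SL{M}(\bbZ)$ has integer entries, so the congruence condition $g_{ij} \equiv \delta_{ij} \pmod m$ in \eqref{principal-congruence} really is a divisibility statement over $\bbZ$. Next I would invoke orthogonality: $g \in K_\infty$ means $g g^{t} = I$, so the $i$-th row has Euclidean norm $1$, giving $\sum_j g_{ij}^2 = 1$ and thus $g_{ij}^2 \le 1$ for all $i,j$. Combining $m \mid g_{ij}$ (for $i \ne j$) with $|g_{ij}| \le 1$ and $m \ge 3$ yields $g_{ij} = 0$ off the diagonal. Therefore $g$ is diagonal, and applying Lemma \ref{nlem-3} finishes the argument.

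There is really no serious obstacle here — the only point to be slightly careful about is that the reduction uses the integrality of $g$ (which comes from $\Gamma(m) \subset \SL{M}(\bbZ)$), not merely that $g$ is a real matrix congruent to the identity in some weaker sense; without integrality the bound $m \mid g_{ij}$ makes no sense. Once that is noted, the rest is immediate from Lemma \ref{nlem-3}.
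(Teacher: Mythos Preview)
Your proposal is correct and follows essentially the same argument as the paper: bound the entries by $1$ via orthogonality, use the congruence $m \mid g_{ij}$ for $i \neq j$ to force the off-diagonal entries to vanish, and then invoke Lemma~\ref{nlem-3} for the diagonal case. The only difference is that you spell out the row-norm argument and the role of integrality a bit more explicitly than the paper does.
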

\begin{proof} Let $g=(g_{ij})\in  \Gamma(m)\cap K_\infty$. Then, by definition of $\Gamma(m)$, 
$m| g_{ij}$ for $i\neq j$. But $|g_{ij}|\le 1< m$. Hence $g_{ij}=0$ for $i\neq j$. 
Thus, $g$ is a
diagonal element of  $\Gamma(m)$. Hence, Lemma \ref{nlem-3} implies the claim.
\end{proof}

\bigskip
We now use Proposition \ref{SL-cusp-forms}.

\begin{Lem}\label{nlem-1}
Suppose $i>1$ is an integer.  Consider the open compact subgroups ${\mathcal K}_{p,i-1}$ and ${\mathcal K}_{p,i}$ in $SL_{M}(\bbQ )$ (notation as in \eqref{iwahori-k}).  Then, there exists a function $f_p$ on $SL_{M}(\bbQ )$ so that:
\begin{itemize}
\item[(i)] $f_P$ is a cusp form with support ${\mathcal K}_{p,i-1}$,
\item[(ii)] $f_p$ is right ${\mathcal K}_{p,i}$-invariant, 
\item[(iii)] $f_p(1) \neq 0$.
\end{itemize}
\end{Lem}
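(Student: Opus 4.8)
The plan is to deduce this directly from Proposition \ref{SL-cusp-forms}. Since $i>1$, put $k:=i-1\ge 1$, so that in the abbreviated notation ${\mathcal K}_m := {\mathcal K}_{p,m}$ of that proposition we have ${\mathcal K}_{p,i-1}={\mathcal K}_k$ and ${\mathcal K}_{p,i}={\mathcal K}_{k+1}$. First I would fix an element $Y\in{\fkg \fkl}_{M}({\bbF}_p)$ with irreducible characteristic polynomial; such a $Y$ exists because ${\bbF}_p$ admits an extension of degree $M$, and multiplication by a generator of ${\bbF}_{p^{M}}$, viewed as an ${\bbF}_p$-linear endomorphism of the $M$-dimensional space ${\bbF}_{p^{M}}$, has irreducible characteristic polynomial. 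I would also fix a nontrivial additive character $\psi$ of ${\bbF}_p$, which gives the character $\psi_Y$ of ${\fks \fkl}_{M}({\bbF}_p)$ as in \eqref{cusp-K-three}.

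Next I would set $f_p := \psi_Y\circ\theta^{-1}$ in the sense of Proposition \ref{SL-cusp-forms}(iii): inflate $\psi_Y$ to ${\mathcal K}_k$ through the isomorphism ${\mathcal K}_k/{\mathcal K}_{k+1}\cong{\fks \fkl}_{M}({\bbF}_p)$ obtained by composing $\theta^{-1}$ of \eqref{cusp-K-one} with the inverse of the rescaling $\tau_k$, and then extend by $0$ outside ${\mathcal K}_k$. Then I would verify the three assertions in turn. For (i): by construction $\supp(f_p)={\mathcal K}_k={\mathcal K}_{p,i-1}$, and $f_p$ is a cusp form on $SL_{M}({\bbQ}_p)$ by Proposition \ref{SL-cusp-forms}(iii), whose hypothesis on $Y$ has been arranged above. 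For (ii): $f_p$ is inflated from the quotient group ${\mathcal K}_k/{\mathcal K}_{k+1}$ and vanishes off the group ${\mathcal K}_k$, so for $g\in SL_{M}({\bbQ}_p)$ and $u\in{\mathcal K}_{k+1}$ one has $gu\in{\mathcal K}_k$ if and only if $g\in{\mathcal K}_k$, and when $g\in{\mathcal K}_k$ the elements $gu$ and $g$ lie in the same coset of ${\mathcal K}_{k+1}$; hence $f_p(gu)=f_p(g)$, which is the required right ${\mathcal K}_{k+1}={\mathcal K}_{p,i}$-invariance. For (iii): $\theta(0)=1$ in $\GL{M}({\bbZ}_p)$, so $f_p(1)=\psi_Y(0)=\psi(\trace(0))=\psi(0)=1\neq 0$.

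I do not anticipate any real obstacle here: all of the substance lives in Proposition \ref{SL-cusp-forms}, and Lemma \ref{nlem-1} is just a repackaging of parts (ii) and (iii) of that proposition under the identification $k=i-1$, together with the elementary book-keeping that produces $f_p(1)\neq 0$, which comes down to $\theta(0)=1$ and $\psi_Y(0)=1$.
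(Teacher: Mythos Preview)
Your proof is correct and follows exactly the same approach as the paper, which simply says ``Apply Proposition \ref{SL-cusp-forms}.'' You have merely unpacked the application by setting $k=i-1$, taking $f_p$ to be the extension-by-zero of the inflated character $\psi_Y\circ\theta^{-1}$, and checking (i)--(iii) explicitly; this is precisely what the paper intends.
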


\begin{proof}  Apply Proposition \ref{SL-cusp-forms}.
\end{proof}

\bigskip 
 
We now use Lemma \ref{nlem-1} to considerably improve Lemma \ref{lem-2}.

\smallskip 

\begin{Lem}\label{nlem-5} Suppose $n>1$ is an integer.  Let $T$ denote the set of primes dividing $n$, and suppose 
\begin{equation}\label{condition-3p} 
n \, \ge \, 3 \, {\underset {p\in T} \prod} \, p \ .
\end{equation}
Then, for any $\delta\in \hat{K}_\infty$, 
there exists $f_\infty\in  C_c^\infty(G_\infty)$ so that 
the following holds:
\begin{itemize}
\item[(i)] $E_\delta(f_\infty)=f_\infty$.
\item[(ii)] For 
$$
f \ =\ f_\infty \, \otimes_p \, f_p
$$

\noindent where  $f_p$  is as in Lemma \ref{nlem-1}  when $p \in T$, and $f_{p}=char_{SL_{M}({\bbZ_{p})}}$ when $p\notin T$, then the 
Poincar\' e series $P(f)$ and its restriction to
 $G_\infty$, which is a Poincar\' e series for $\Gamma(n)$, are   non--zero.
\item[(iii)]  $E_\delta(P(f))=P(f)$, $E_\delta(P(f)|_{G_\infty})=P(f)|_{G_\infty}$,  and 
$P(f)$ is right--invariant under $K_{n}$.
\item[(iv)] The support of $P(f)|_{G_\infty}$ is contained in a set of the form 
$\Gamma(n)\cdot C$, where 
$C$ is a compact set which is right--invariant under 
$K_\infty$, and  $\Gamma(n)\cdot C$ is not whole $G_\infty$. 
\item[(v)] $P(f)$ is cuspidal and  $P(f)|_{G_\infty}$ is $\Gamma(n)$--cuspidal.
\end{itemize}
\end{Lem}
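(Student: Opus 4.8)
The plan is to deduce Lemma \ref{nlem-5} from Lemma \ref{lem-2} applied to the specific family of data produced in Lemma \ref{nlem-1}, using the arithmetic hypothesis \eqref{condition-3p} to force the finite intersection set \eqref{e-3} in Lemma \ref{lem-2} to be trivial, which is precisely the mechanism that upgrades the mere non-triviality of the map \eqref{e-4} to the sharper properties (i)--(v). First I would set $S$ to be $\{\infty\}$ together with the primes in $T$, take $L_p = {\mathcal K}_{p, e_p}$ (so that $\prod_p L_p = K_n$ and $\Gamma_{K_n} = \Gamma(n)$ by Proposition \ref{gamma-one-2}), and choose $f_p$ for $p \in T$ as in Lemma \ref{nlem-1} with $i-1 = e_p - 1$, so that $f_p$ has support ${\mathcal K}_{p,e_p - 1}$, is right ${\mathcal K}_{p,e_p}$-invariant, satisfies $f_p(1)\ne 0$, and is a cusp form on $SL_M(\bbQ_p)$. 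For $p \notin S$ set $f_p = 1_{SL_M(\bbZ_p)}$. These data satisfy the hypotheses of Lemma \ref{lem-2}.

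The key point is to analyze the intersection
$$
\Gamma_S \cap \Big( K_\infty \times {\underset {p\in T} \prod} \supp(f_p) \Big)
= \Gamma_S \cap \Big( K_\infty \times {\underset {p\in T} \prod} {\mathcal K}_{p, e_p - 1} \Big).
$$
An element $\gamma$ of this set lies in $SL_M(\bbQ)$, is integral away from $T$, lies in ${\mathcal K}_{p,e_p-1}$ for $p \in T$, and has archimedean component in $K_\infty = SO(M)$. The local conditions at the primes of $T$, combined with integrality away from $T$, force $\gamma \in SL_M(\bbZ)$ with $\gamma \equiv I \pmod{\prod_{p\in T} p^{e_p-1}}$; in particular $\gamma \in \Gamma\big(\prod_{p \in T} p^{e_p - 1}\big) \subseteq \Gamma\big(\prod_{p\in T} p\big)$. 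Since also $\gamma \in SO(M)$, the off-diagonal entries $\gamma_{ij}$ ($i \ne j$) are divisible by $\prod_{p \in T} p$ yet bounded in absolute value by $1$; hypothesis \eqref{condition-3p} gives $\prod_{p\in T} p \le n/3 < n$, but more to the point $\prod_{p\in T}p \ge 2 > 1 \ge |\gamma_{ij}|$ unless $T$ is empty (it is not, as $n>1$), so $\gamma_{ij} = 0$ and $\gamma$ is diagonal. Then $\gamma \in \Gamma\big(\prod_{p\in T} p\big) \cap K_\infty$ is a diagonal element; using \eqref{condition-3p} once more to ensure $\prod_{p \in T} p \ge 3$ is \emph{not} what we need — instead we invoke that $\gamma$ lies in $\Gamma(n)$-type congruence conditions: actually the cleanest route is to note $\gamma$ is a diagonal element of $\Gamma(n)$ itself (since $\gamma \in SL_M(\bbZ)$, $\gamma$ diagonal, $\gamma \equiv I \pmod{\prod p^{e_p-1}}$, and being diagonal with $\det 1$ forces by the same argument as Lemma \ref{nlem-2} that each $\gamma_{ii} \in \{\pm 1\}$), so by $n \ge 3$ and Lemma \ref{nlem-3}--\ref{nlem-4} we get $\gamma = I$. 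Hence $l = 1$, $\gamma_1 = 1$, $c_1 = \prod_{p\in T} f_p(1) \ne 0$, and the map \eqref{e-4} is (up to a nonzero scalar) the natural averaging map $C^\infty(K_\infty) \to C^\infty(K_\infty \cap \Gamma(n) \backslash K_\infty) = C^\infty(K_\infty)$, which is surjective; so \emph{every} $\delta \in \hat K_\infty$ contributes to the closure of its image.

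With this in hand, parts (i)--(iv) follow verbatim from the corresponding conclusions (i)--(iv) of Lemma \ref{lem-2}, now available for an \emph{arbitrary} prescribed $\delta$ rather than merely some $\delta$; the right-invariance under $K_n$ in (iii) and the right-$K_\infty$-invariance of the support set in (iv) are exactly the statements there, and $E_\delta(P(f)|_{G_\infty}) = P(f)|_{G_\infty}$ follows since restriction to $G_\infty$ is $K_\infty$-equivariant. For part (v), cuspidality of $P(f)$: since each $f_p$ for $p \in T$ is a cusp form on $SL_M(\bbQ_p)$ (Lemma \ref{nlem-1}(i), via Proposition \ref{SL-cusp-forms}(iii)), the local integrals $\int_{U_P(\bbQ_p)} f_p(X u Y)\,du$ vanish, and by the standard unfolding of the Poincaré series against a unipotent integration — writing the adelic integral $\int_{U_P(\bbQ)\backslash U_P(\bbA)} P(f)(ug)\,du$ as a product of local integrals after unfolding $\sum_{\gamma \in G(\bbQ)}$ — one such vanishing local factor kills the whole thing; then cuspidality of $P(f)|_{G_\infty}$ for $\Gamma(n)$ follows from Lemma \ref{lem-1}(ii). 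The main obstacle is the intersection computation: one must carefully track that the support condition at the primes of $T$ really does confine $\gamma$ to integral matrices congruent to the identity modulo $\prod p^{e_p - 1}$ and that combining this with $\gamma \in SO(M)$ and hypothesis \eqref{condition-3p} forces $\gamma = I$; the role of the constant $3$ (versus the bare product $\prod_{p\in T} p$) is exactly to push into the range $n \ge 3$ where Lemmas \ref{nlem-2}--\ref{nlem-4} apply to eliminate diagonal torsion, so one should phrase the argument to reduce to $\Gamma(n) \cap K_\infty = \{I\}$ rather than $\Gamma(\prod_{p\in T} p) \cap K_\infty$.
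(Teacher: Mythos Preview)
Your overall strategy matches the paper's: feed the data of Lemma~\ref{nlem-1} into Lemma~\ref{lem-2} with $S = T \cup \{\infty\}$, show the finite set \eqref{e-3} collapses to $\{1\}$ so that the map \eqref{e-4} is a nonzero scalar and every $\delta \in \hat K_\infty$ occurs, then read off (i)--(iv) from Lemma~\ref{lem-2} and (v) from the local cusp-form property of the $f_p$ via Lemma~\ref{lem-1}.

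The intersection computation, however, is muddled and contains two genuine errors. First, the inclusion $\Gamma\big(\prod_{p\in T} p^{e_p-1}\big) \subseteq \Gamma\big(\prod_{p\in T} p\big)$ fails whenever some $e_p = 1$; for instance $n = 18 = 2\cdot 3^2$ satisfies \eqref{condition-3p}, yet $\prod_{p\in T} p^{e_p-1} = 3$ while $\prod_{p\in T} p = 6$, and $\Gamma(3) \not\subseteq \Gamma(6)$. Second, $\gamma$ need not lie in $\Gamma(n)$ itself, only in $\Gamma\big(n/\prod_{p\in T}p\big)$, so both your ``cleanest route'' and your closing self-correction target the wrong congruence subgroup. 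The repair is the paper's one-line argument and is simpler than what you attempt: since $\supp(f_p) = {\mathcal K}_{p,\,e_p-1}$, the projection of \eqref{e-3} to $G_\infty$ lands in $K_\infty \cap \Gamma\big(n/\prod_{p\in T}p\big)$; hypothesis \eqref{condition-3p} says exactly that $n/\prod_{p\in T}p \ge 3$, so Lemma~\ref{nlem-4} applies directly and gives $\gamma = I$. No separate handling of off-diagonal versus diagonal entries is needed, and this is precisely where the constant $3$ enters --- not through $n \ge 3$ or $\prod_{p\in T} p \ge 3$.
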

\begin{proof} We use Lemmas \ref{lem-1} and \ref{lem-2}. We also use the notation introduced 
in the paragraph before and in Lemma \ref{nlem-1}. This meets all assumptions of Lemma 
\ref{lem-2} (with $L=K_{n}$). We let $S=T\cup \{\infty\}$. 

We need to study the intersection (\ref{e-3}). In our 
case it is given by 

\begin{equation} \label{n-2}
\Gamma_S \cap \left[K_\infty \times \prod_{p\in T} \supp{\ (f_p)}\right].
\end{equation}
Thanks to Lemma \ref{nlem-1}, this is a subset of 
$$
\Gamma_S \cap \left[K_\infty \times \prod_{p\in T} {\mathcal K}_{p,\nu_p(n)-1}\right].
$$
But projecting down to the first factor, this intersection becomes
$$
K_\infty\cap \Gamma(n/\prod_{p\in T}p).
$$ 

\noindent By Lemma \ref{nlem-4}, and our assumption $n \, \ge \, 3 {\underset {p\in T} \prod} \, p$,
it is trivial.  Whence, (\ref{n-2}) consists of the identity only. 
In particular, in (\ref{e-3}), we have $K_\infty\cap \Gamma=\{1\}$, $l=1$,  
$\gamma_1=1$, and $c_1\neq 0$.  We remark that $\Gamma=\Gamma(n)$ (see (\ref{e-20})). 

\smallskip

Next, by Lemma \ref{lem-2}, we need to study the map (\ref{e-4}). Thanks to the above computations, this map is $\alpha\mapsto c_1\cdot \alpha$. Hence, it is essentially the identity. 
Now, (i)--(iv) of the lemma follow from (i)--(iv) from Lemma \ref{lem-2}
for any $K_\infty$--type $\delta$. 
Finally, (v) follows from Lemma \ref{lem-1}, and (\cite{Muic1}, Proposition 5.3). 
\end{proof}

\bigskip

We now prove the main result of this section. It is analogous and  generalizes the main result of \cite{Muic3} (see \cite{Muic3}, Theorem 0-1).

\bigskip

\begin{Thm}\label{nthm}  Suppose $n >1$ is an integer and satisfies the condition \eqref{condition-3p} that $n \ge 3 {\underset {p \in T} \prod p }$.  Then, for any $\delta\in \hat{K}_\infty$, the orthogonal complement of 
$$
\sum_{\substack{m| n\\ m< n }} L^2_{cusp}(\Gamma(m)\backslash G_\infty)
$$ 
in 
$L^2_{cusp}(\Gamma(n)\backslash G_\infty)$ contains a direct sum of infinitely many 
inequivalent irreducible unitary  representations of $G_\infty$ all containing $\delta$.
\end{Thm}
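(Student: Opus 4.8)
The plan is to reduce Theorem \ref{nthm} to the already-established machinery: Theorem \ref{intr-thm} (with its refined packaging in Lemma \ref{lem-3}) applied to the family $\calF = \{K_m \mid m \mid n\}$, together with the sharpened non-vanishing input of Lemma \ref{nlem-5}. First I would note that by Proposition \ref{gamma-one-2} we have $\Gamma(m) = K_m \cap G(\bbQ)$ for every $m \mid n$, so the sum $\sum_{m\mid n,\, m<n} L^2_{cusp}(\Gamma(m)\backslash G_\infty)$ is exactly the sum $\sum_{L\in\calF,\, L_{\text{\rm{min}}}\subsetneq L} L^2_{cusp}(\Gamma_L\backslash G_\infty)$ with $L_{\text{\rm{min}}} = K_n$; this requires checking that the $K_m$ are nested correctly ($K_n\subseteq K_m$ when $m\mid n$, via ${\mathcal K}_{p,k}\subseteq {\mathcal K}_{p,k'}$ for $k\ge k'$) and that $K_n$ is the minimal element. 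The examples in Section \ref{sec-3} (first bullet) already record that $\calF$ satisfies Assumptions \ref{assumptions}, the key point being Proposition \ref{SL-cusp-forms}(iv): for $p\in T$ there is a supercuspidal $\pi_p$ with a nonzero ${\mathcal K}_{p,\nu_p(n)}$-fixed vector but no ${\mathcal K}_{p,\nu_p(n)-1}$-fixed vector, so $\pi_p^{K_{n,p}}\ne 0$ while $\pi_p^{K_{m,p}} = 0$ whenever $\nu_p(m) < \nu_p(n)$, i.e. whenever $m$ is a proper divisor and $p$ witnesses it.

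Next I would invoke the $f$ constructed in Lemma \ref{nlem-5}: given $\delta\in\hat K_\infty$, it produces $f = f_\infty\otimes_p f_p$ with $f_p$ the cusp form of Lemma \ref{nlem-1} (support ${\mathcal K}_{p,\nu_p(n)-1}$, right ${\mathcal K}_{p,\nu_p(n)}$-invariant, $f_p(1)\ne 0$) for $p\in T$, and $f_p = \mathrm{char}_{SL_M(\bbZ_p)}$ otherwise, such that $P(f)$ is a nonzero cusp form on $G(\bbQ)\backslash G(\bbA)$, right $K_n$-invariant, with $E_\delta(P(f)) = P(f)$. Crucially, because each $f_p$ for $p\in T$ is a matrix coefficient of the supercuspidal $\pi_p$ (this is how Proposition \ref{SL-cusp-forms}(iv) and hence Lemma \ref{nlem-1} is built, so I would make the identification $\pi_p$ explicit here), Lemma \ref{lem-3} applies verbatim: $P(f) = \sum_j \psi_j$ with $\psi_j\in\mathfrak H_j$, each $\psi_j$ right $K_n$-invariant and $\delta$-isotypic, each nonzero $\psi_j$ has local component $\pi_p^j\simeq\pi_p$ at $p\in T$, infinitely many $\psi_j$ are nonzero, and the $G_\infty$-span of $P(f)|_{G_\infty}$ in $L^2_{cusp}(\Gamma(n)\backslash G_\infty)$ is an orthogonal direct sum of infinitely many inequivalent irreducibles, all containing $\delta$.

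Then the orthogonality argument of Lemmas \ref{lem-7}, \ref{lem-8}, \ref{lem-9} carries over: for $m\mid n$ with $m < n$, pick $p\in T$ with $\nu_p(m) < \nu_p(n)$; since $K_{n,p}\subsetneq K_{m,p}$ with the relevant index, and $f_p$ (a matrix coefficient of $\pi_p$ with $\pi_p^{K_{m,p}} = 0$) integrates to zero over $K_{m,p}$, one gets $\int_{K_m} f(gk)\,dk = 0$ hence $\int_{K_m} P(f)(gk)\,dk = 0$ for all $g$, and then $P(f)\perp L^2_{cusp}(G(\bbQ)\backslash G(\bbA))^{K_m}$ in the Petersson product; via the commutative diagram of Lemma \ref{lem-7} (using $\Gamma(n) = K_n\cap G(\bbQ)$, $\Gamma(m) = K_m\cap G(\bbQ)$, and the norm-comparison scalars) this gives $P(f)|_{G_\infty}\perp L^2_{cusp}(\Gamma(m)\backslash G_\infty)$ inside $L^2_{cusp}(\Gamma(n)\backslash G_\infty)$. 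Hence the infinite-dimensional space $\calU$ generated by $P(f)|_{G_\infty}$ lies in the orthogonal complement of $\sum_{m\mid n,\, m<n} L^2_{cusp}(\Gamma(m)\backslash G_\infty)$, and it is a direct sum of infinitely many inequivalent irreducibles each containing $\delta$, which is the assertion.

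The step I expect to be the main obstacle — really the only place needing care beyond bookkeeping — is verifying that the $\gamma_j$-bookkeeping of Lemma \ref{lem-2}/\ref{nlem-5} genuinely collapses so that $P(f)$ is nonzero and $\delta$ can be taken arbitrary: this is exactly where condition \eqref{condition-3p}, $n\ge 3\prod_{p\in T}p$, is used, through Lemma \ref{nlem-4} forcing $\Gamma(n/\prod_{p\in T}p)\cap K_\infty = \{1\}$ so that the relevant intersection \eqref{n-2} is trivial, $l = 1$, $\gamma_1 = 1$, and the map \eqref{e-4} is essentially the identity — making every $K_\infty$-type $\delta$ admissible rather than just those surviving a nontrivial averaging. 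Since Lemma \ref{nlem-5} already packages this, the proof of Theorem \ref{nthm} is then a matter of assembling Lemma \ref{nlem-5}, Lemma \ref{lem-3}, and the orthogonality lemmas, with the identifications $L_{\text{\rm{min}}} = K_n$, $\Gamma_{L_{\text{\rm{min}}}} = \Gamma(n)$ in place.
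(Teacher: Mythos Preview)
Your proposal is correct and follows exactly the paper's own approach, which merely says that the family $\{K_m : m \mid n\}$ satisfies Assumptions \ref{assumptions} (Section \ref{sec-3}) and that ``the proof is the same as the proof of Theorem \ref{intr-thm}''; you have simply spelled out those details, with Lemma \ref{nlem-5} replacing Lemma \ref{lem-2} so that $\delta$ may be prescribed arbitrarily. One small caution: the $f_p$ produced by Lemma \ref{nlem-1} is the inflated character $\chi$ extended by zero, and since $V_\chi$ can be reducible this is not literally a matrix coefficient of a \emph{single} irreducible supercuspidal $\pi_p$, so rather than asserting Lemma \ref{lem-3} applies verbatim you should observe directly that $\int_{{\mathcal K}_{p,\nu_p(m)}} f_p(gl)\,dl = 0$ (immediate because $\chi$ is a nontrivial character on the normal subgroup ${\mathcal K}_{p,\nu_p(n)-1} \subset {\mathcal K}_{p,\nu_p(m)}$) and that the conclusions of Lemma \ref{lem-3}(iii),(iv) need only cuspidality and the compact-support property already supplied by Lemma \ref{nlem-5}(iv),(v).
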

\begin{proof} The (finite) family of open compact subgroups 
$$
{\mathcal F} = \{ \ K_{m} \ |  \ \ 1 \le m\le n, \  m|n \ \}
$$
meet all the assumptions of \ref{assumptions} with the group
$K_{n}$ contained in all the other $K_{m}\in {\mathcal F}$.
(See Section \ref{sec-3}.) Now, the proof is the same as the proof of Theorem 
\ref{intr-thm}. We leave the details to the
reader.

\end{proof}

\vskip .2in

\begin{Cor}\label{ncor-1} Suppose $n >1$ satisfies $n \ge 3 {\underset {p \in T} \prod p }$.  Then the orthogonal complement of 
$$
\sum_{\substack{m| n\\ m< n }} L^2_{cusp}(\Gamma(m)\backslash G_\infty)
$$ 
in 
$L^2_{cusp}(\Gamma(n)\backslash G_\infty)$ contains a direct sum of infinitely 
many inequivalent irreducible unitary 
$K_\infty$--spherical representations of $G_\infty$.
\end{Cor}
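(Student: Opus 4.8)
The plan is to obtain the corollary by specializing Theorem \ref{nthm} to the trivial $K_\infty$-type. Recall that an irreducible unitary representation of $G_\infty = SL_M(\bbR)$ is $K_\infty$-spherical precisely when it has a non-zero vector fixed by $K_\infty = \mathrm{SO}(M)$, equivalently when the trivial representation $\triv$ of $K_\infty$ occurs in its restriction to $K_\infty$ — that is, when it "contains $\delta$" in the sense used in the statement of Theorem \ref{nthm}. So the notion of "spherical" is exactly the notion "contains $\triv$".

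First I would note that $\triv \in \hat{K}_\infty$, so Theorem \ref{nthm} is applicable with the particular choice $\delta = \triv$. Since by hypothesis $n > 1$ satisfies the condition \eqref{condition-3p} that $n \ge 3 \prod_{p \in T} p$, the theorem then produces, inside $L^2_{cusp}(\Gamma(n)\backslash G_\infty)$ and orthogonal to $\sum_{m \mid n,\, m < n} L^2_{cusp}(\Gamma(m)\backslash G_\infty)$, a direct sum of infinitely many inequivalent irreducible unitary representations of $G_\infty$, each of which contains the $K_\infty$-type $\delta = \triv$ (this "containing $\delta$" is built into Theorem \ref{nthm}, ultimately coming from Lemma \ref{lem-3}(iv)).

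Finally I would conclude: each of these infinitely many pairwise inequivalent irreducible summands contains the trivial $K_\infty$-type $\triv$, hence is $K_\infty$-spherical, which is exactly the assertion of the corollary. The argument has essentially no obstacle: it is a direct specialization of Theorem \ref{nthm}, the only points to record being the elementary facts that $\triv$ is a legitimate element of $\hat{K}_\infty$ and that "$K_\infty$-spherical" is synonymous with "contains the $K_\infty$-type $\triv$".
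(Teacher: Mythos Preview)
Your proposal is correct and matches the paper's approach: the corollary is stated immediately after Theorem \ref{nthm} with no separate proof, so it is understood as the specialization $\delta = \triv_{K_\infty}$, exactly as you argue.
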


\vskip .2in 

\begin{Cor}\label{ncor-2}  Suppose $n >1$ satisfies $n \ge 3 {\underset {p \in T} \prod p }$.  Then, for every $\delta\in 
\hat{K}_\infty$, the orthogonal complement of 
$$
\sum_{\substack{m| n\\ m< n }} L^2_{cusp}(\Gamma(m)\backslash G_\infty)
$$ 
in 
$L^2_{cusp}(\Gamma(n)\backslash G_\infty)$ contains a direct sum of infinitely many inequivalent irreducible unitary 
representations of $G_\infty$ all contaning $\delta$ which are not in the discrete series or in the limits of 
discrete series for $G_\infty$.
\end{Cor}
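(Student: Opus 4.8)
The plan is to deduce Corollary~\ref{ncor-2} from Theorem~\ref{nthm} by a counting argument at the archimedean place, exactly as one proves analogous statements for compact quotients (cf.\ \cite{Muic3}). The key point is that the discrete series and the limits of discrete series of $G_\infty = SL_M(\bbR)$ contain only finitely many $K_\infty$-types below any fixed infinitesimal-character bound, and in fact only finitely many of them contain a \emph{fixed} $\delta \in \hat K_\infty$ once one also restricts the infinitesimal character; but the representations produced by Theorem~\ref{nthm}, being infinite in number and all containing $\delta$, cannot all be accounted for this way. So I would proceed as follows.

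First, recall that for $SL_M(\bbR)$ the discrete series exist only when $M = 2$ (more generally for groups with a compact Cartan subgroup); when $M \ge 3$ there are no discrete series and no limits of discrete series at all, so the corollary is immediate from Theorem~\ref{nthm} in that case. Hence I would reduce to $M = 2$, i.e.\ $G_\infty = SL_2(\bbR)$, $K_\infty = SO(2)$. Here $\hat K_\infty \cong \bbZ$, and a discrete series (or limit of discrete series) representation $D_n^{\pm}$ of parameter $n$ contains the $K_\infty$-type $\delta$ (an integer $m$) if and only if $m$ has the appropriate sign and $|m| \ge n$ (resp.\ $|m| \ge n$ with $n \ge 1$), and moreover its Casimir eigenvalue is a fixed increasing function of $n$. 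Thus for a \emph{fixed} $\delta$, the number of discrete series and limits of discrete series containing $\delta$ is finite --- indeed bounded by $|\delta|$ together with the two sign choices.

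Next, I would invoke Theorem~\ref{nthm}: the orthogonal complement in question contains a direct sum of infinitely many inequivalent irreducible unitary representations of $G_\infty$ all containing $\delta$. Since only finitely many isomorphism classes of discrete series or limits of discrete series can contain a fixed $\delta$, all but finitely many members of this infinite family lie outside the discrete series and the limits of discrete series. Removing those finitely many still leaves infinitely many, and they are still inequivalent, irreducible, unitary, all containing $\delta$, and by construction sitting in the orthogonal complement of $\sum_{m\mid n,\, m<n} L^2_{cusp}(\Gamma(m)\backslash G_\infty)$. This is exactly the assertion of Corollary~\ref{ncor-2}.

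The only genuinely delicate point is the finiteness statement at $SL_2(\bbR)$: that a fixed $K_\infty$-type appears in only finitely many discrete series and limits of discrete series. This is standard representation theory of $SL_2(\bbR)$ (the $K$-type support of $D_n^\pm$ is the set $\{\pm(n+2j) : j \ge 0\}$), so I would simply cite it rather than reprove it; no estimate on the archimedean test function $f_\infty$ or its Poincar\'e series is needed beyond what Theorem~\ref{nthm} already provides. I would present the argument in the two cases $M\ge 3$ and $M=2$, noting that in the first case the conclusion is vacuously stronger, and in the second case it follows from the $K$-type bookkeeping above combined with Theorem~\ref{nthm}.
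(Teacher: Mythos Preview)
Your argument is correct and is essentially the approach the paper has in mind: the paper's own proof is simply the reference ``As in (\cite{Muic3}, Proposition 4.2)'', and that proposition uses exactly the finiteness-versus-infinitude mechanism you describe, namely that only finitely many discrete series and limits of discrete series can contain a fixed $K_\infty$-type, so Theorem~\ref{nthm} forces infinitely many of the produced representations to lie outside that class. Your case split $M\ge 3$ versus $M=2$ is a harmless simplification available for $SL_M(\bbR)$ (no compact Cartan when $M\ge 3$), whereas the cited argument in \cite{Muic3} treats general $G_\infty$ uniformly via the Harish-Chandra/Blattner parametrization; either way the content is the same.
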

\begin{proof} As in (\cite{Muic3}, Proposition 4.2).
\end{proof}

\vskip .2in
Let $P_\infty=M_\infty A_\infty N_\infty$ be the Langlands decomposition of a minimal parabolic subgroup  of $G_\infty$. We let $\fraka_\infty$ be the real 
Lie algebra of $A_\infty$ and   $\fraka^*_\infty$ its complex dual. 
We use Vogan's theory of minimal $K_\infty$--types (\cite{vog}, \cite{vog-1}). Any 
$\epsilon\in \hat{M}_\infty$ is fine (\cite{vog-1}, Definition 4.3.8).

\smallskip

Let $\epsilon\in \hat{M}_\infty$. Following (\cite{vog-1}, Definition 4.3.15), we let $A(\epsilon)$ be the
set of  $K_\infty$--types $\delta$ such that $\delta$ is fine (\cite{vog-1}, Definition 4.3.9) and  $\epsilon$ occurs in 
$\delta|_{M_\infty}$. Applying (\cite{vog-1}, Theorem  4.3.16), we obtain that $A(\epsilon)$ is not empty and  
for $\delta \in A(\epsilon)$,  we have the following:
\begin{equation}\label{K-ind}
\delta|_M= \oplus_{\epsilon'\in  \{w(\epsilon); \ w\in W\}} \epsilon',
\end{equation}
where $W=N_{K_\infty}(A_\infty)/M_\infty$ is the Weyl group of $A_\infty$ in $G_\infty$. Since the restriction map implies 
$\Ind_{M_\infty A_\infty N_\infty}^{G_\infty}(\epsilon \otimes \exp{\nu(\ )})\simeq \Ind_{M_\infty}^{K_\infty}(\epsilon)$
as $K_\infty$--representations, by Frobenius reciprocity and (\ref{K-ind}) we see for every $\nu \in \fraka^*_\infty$  there exists a unique irreducible
subquotient $J_{\epsilon\otimes \nu}(\delta)$ of
$\Ind_{M_\infty A_\infty N_\infty}^{G_\infty}(\epsilon \otimes \exp{\nu(\ )})$
containing the $K_\infty$--type $\delta$.  

\vskip .2in 

One important example is the case $\epsilon=\triv_{M_\infty}$. Then $\mu=\triv_{K_\infty}\in A(\triv_M)$, and 
 $J_{\epsilon\otimes \nu}(\delta)$ is the unique $K_\infty$--spherical irreducible subquotient of 
$\Ind_{M_\infty A_\infty N_\infty }^{G_\infty}(\epsilon \otimes \exp{\nu(\ )})$.

\vskip .2in 
\begin{Cor} \label{ncor-3}  Suppose $n >1$ satisfies $n \ge 3 {\underset {p \in T} \prod p }$.  Let $\epsilon\in \hat{M_\infty}$.  Then, for every
$\delta \in A(\epsilon)$, there exist infinitely many $\nu \in \fraka^*$ such that 
$J_{\epsilon\otimes \nu}(\delta)$  appears in the orthogonal complement of 
$$
\sum_{\substack{m| n\\ m< n }} L^2_{cusp}(\Gamma(m)\backslash G_\infty)
$$ 
in 
$L^2_{cusp}(\Gamma(n)\backslash G_\infty)$.
\end{Cor}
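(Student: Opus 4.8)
The plan is to deduce Corollary \ref{ncor-3} from Theorem \ref{nthm} together with Vogan's classification of irreducible representations containing a fixed fine $K_\infty$-type, in the same spirit as the proof of Corollary \ref{ncor-2}. Fix $\epsilon\in\hat{M}_\infty$ and $\delta\in A(\epsilon)$. Recall that $\delta$ is fine and, by \eqref{K-ind}, $\delta|_{M_\infty}=\oplus_{\epsilon'\in\{w(\epsilon);\, w\in W\}}\epsilon'$; consequently the set $\{\epsilon'\in\hat{M}_\infty : \delta\in A(\epsilon')\}$ is exactly the finite Weyl orbit $W\cdot\epsilon$, and $\delta$ occurs with multiplicity one in $\Ind_{M_\infty}^{K_\infty}(\epsilon')$ for every $\epsilon'\in W\cdot\epsilon$. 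The first step is to invoke Theorem \ref{nthm} for this particular $\delta$: it furnishes infinitely many pairwise inequivalent irreducible unitary representations $\Pi_1,\Pi_2,\dots$ of $G_\infty$, each containing $\delta$, all occurring in the orthogonal complement of $\sum_{m|n,\,m<n}L^2_{cusp}(\Gamma(m)\backslash G_\infty)$ in $L^2_{cusp}(\Gamma(n)\backslash G_\infty)$.

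The key step is to identify each $\Pi_i$ as one of the representations $J_{\epsilon'\otimes\nu}(\delta)$. By the Harish-Chandra (or Casselman) subquotient theorem, $\Pi_i$ is an irreducible subquotient of a minimal principal series $\Ind_{M_\infty A_\infty N_\infty}^{G_\infty}(\epsilon_i\otimes\exp{\nu_i(\ )})$ for some $\epsilon_i\in\hat{M}_\infty$ and $\nu_i\in\fraka^*_\infty$. Since $\delta$ occurs in $\Pi_i$, it occurs in $\Ind_{M_\infty A_\infty N_\infty}^{G_\infty}(\epsilon_i\otimes\exp{\nu_i(\ )})|_{K_\infty}=\Ind_{M_\infty}^{K_\infty}(\epsilon_i)$, so by Frobenius reciprocity $\epsilon_i$ occurs in $\delta|_{M_\infty}$; as every element of $\hat{M}_\infty$ is fine and $\delta$ is fine, this means $\delta\in A(\epsilon_i)$, i.e. $\epsilon_i\in W\cdot\epsilon$. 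Because $\delta$ has multiplicity one in $\Ind_{M_\infty}^{K_\infty}(\epsilon_i)$, exactly one irreducible subquotient of $\Ind_{M_\infty A_\infty N_\infty}^{G_\infty}(\epsilon_i\otimes\exp{\nu_i(\ )})$ contains $\delta$, namely $J_{\epsilon_i\otimes\nu_i}(\delta)$; hence $\Pi_i\simeq J_{\epsilon_i\otimes\nu_i}(\delta)$.

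Next I would apply the pigeonhole principle to the finite set $W\cdot\epsilon$: infinitely many of the $\Pi_i$ share a common value $\epsilon_i=w_0(\epsilon)$. For those indices, using that $\Ind_{M_\infty A_\infty N_\infty}^{G_\infty}(w_0(\epsilon)\otimes\exp{\nu_i(\ )})$ and $\Ind_{M_\infty A_\infty N_\infty}^{G_\infty}(\epsilon\otimes\exp{(w_0^{-1}\nu_i)(\ )})$ have the same composition factors and both contain $\delta$ with multiplicity one, we get $J_{w_0(\epsilon)\otimes\nu_i}(\delta)\simeq J_{\epsilon\otimes w_0^{-1}(\nu_i)}(\delta)$. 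Writing $\mu_i:=w_0^{-1}(\nu_i)\in\fraka^*_\infty$, we have produced infinitely many pairwise inequivalent $J_{\epsilon\otimes\mu_i}(\delta)$ occurring in the orthogonal complement; since $\mu\mapsto J_{\epsilon\otimes\mu}(\delta)$ is single-valued, the $\mu_i$ must take infinitely many distinct values in $\fraka^*_\infty=\fraka^*$, which is precisely the assertion of the corollary.

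The only genuinely delicate point is the identification $\Pi_i\simeq J_{\epsilon_i\otimes\nu_i}(\delta)$: it relies on Vogan's theory of fine (minimal) $K_\infty$-types, specifically that a fine $K_\infty$-type appears with multiplicity at most one in any minimal principal series $\Ind_{M_\infty}^{K_\infty}(\epsilon')$ and hence that an irreducible admissible representation containing $\delta$ is the unique Langlands constituent of such a principal series containing $\delta$, together with the fact (already recorded in the excerpt) that for $G=SL_M$ every character of $M_\infty$ is fine. Everything else --- the pigeonhole over $W\cdot\epsilon$ and the composition-factor identity for associate minimal principal series --- is routine bookkeeping.
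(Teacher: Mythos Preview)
Your argument is correct and is precisely the detailed version of what the paper leaves implicit: the paper's own proof consists solely of the sentence ``As in (\cite{Muic3}, Theorem 4.8)'', deferring the entire argument to that reference, and your write-up spells out exactly that method --- apply Theorem~\ref{nthm} with the given fine $K_\infty$-type $\delta$, realize each resulting irreducible via the subquotient theorem as the unique constituent $J_{\epsilon_i\otimes\nu_i}(\delta)$ of a minimal principal series (using \eqref{K-ind} and multiplicity one), and then pigeonhole over the finite Weyl orbit $W\cdot\epsilon$ together with the standard composition-series equivalence $J_{w(\epsilon)\otimes\nu}(\delta)\simeq J_{\epsilon\otimes w^{-1}(\nu)}(\delta)$. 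There is no substantive difference in approach.
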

\begin{proof}
 As in (\cite{Muic3}, Theorem 4.8).
\end{proof}

\bigskip

\bigskip

\end{document}